\documentclass{article}

\usepackage{amsmath}
\usepackage{amsfonts}
\usepackage{amssymb}
\usepackage{amsrefs}
\usepackage{amsthm}
\usepackage{graphicx}
\usepackage{hyperref}
\usepackage{upgreek}
\usepackage{mathrsfs}
\usepackage{subcaption}
\usepackage{comment}
\usepackage{cleveref}
\usepackage{tikz}
\usetikzlibrary{math}
\usetikzlibrary{calc}
\usetikzlibrary{arrows.meta}
\usetikzlibrary{backgrounds}
\usepackage{tikz-3dplot}
\pgfdeclarelayer{behind}
\pgfsetlayers{background,behind,main}

\newcommand{\bd}[1]{\operatorname{bd}(#1)}

\newcommand{\vol}[1]{\operatorname{vol}(#1)}
\newcommand{\area}[1]{\operatorname{area}(#1)}

\newcommand{\floor}[1]{\lfloor #1 \rfloor}

\newtheorem{theorem}{Theorem}
\newtheorem{lemma}{Lemma}

\newtheorem{conjecture}{Conjecture}

\theoremstyle{definition}
\newtheorem{definition}{Definition}

\title{The number of touching pairs of congruent sphere packings in Euclidean 3-space}
\author{Cameron Strachan}
\date{}

\begin{document}
\maketitle

\begin{abstract}
    A packing of $n$ congruent balls in $\mathbb{R}^3$ is a family of interior-disjoint Euclidean balls all having the same radius.
    The contact number of a packing is the number of touching pairs of balls.
    In this paper we investigate the problem of determining the maximum contact number, $c(n)$, of a packing of $n$ congruent balls in $\mathbb{R}^3$.
    We first show that all packings of $n$ congruent balls that have a contact number of $c(n)$ are minimally rigid. Furthermore, we show that $c(n)=3n-6$ for $n=6,7,8,$ and $9$.
    These two results resolve a conjecture of K. Bezdek and Khan.
    During the proof of the latter result, we also enumerate the contact structures of all packings of $n$ congruent balls with contact number $c(n)$ for $n=6,7,$ and $8$.
    Additionally, we provide a lower bound construction which shows $c(n)> 6n-6\sqrt[3]{2}n^\frac{2}{3}$ when $n=16k^3-33k^2+24k-6$ where $k\in \mathbb{N}$.
    We also look at the restricted problem where each ball is centered on the face-centered cubic lattice $A_3$. In this case let $c_{A}(n)$ denote the maximum contact number.
    We show that $c_{A}(n)\leq 6n-\frac{6}{\sqrt[6]{2}}n^\frac{2}{3}$ for all $n$, and determine the asymptotics of $c_{A}(n)$ to be $c_{A}(n)=6n-(1+o(1))6\sqrt[3]{2}n^\frac{2}{3}$.
\end{abstract}

\section{Introduction}

Let $B^d$ denote the origin-centered Euclidean ball with unit diameter in $\mathbb{R}^d$.
A finite packing of unit-diameter balls is a finite family of interior-disjoint translates of $B^d$ in $\mathbb{R}^d$.
If the boundaries of two balls intersect, we say the balls touch or are in contact with each other.
Given a packing of $n$ unit-diameter balls, we define an $n$-vertex graph $G$ called the contact graph of the packing.
Each vertex in this graph corresponds to a ball in the packing, and a pair of vertices is connected with an edge if and only if the corresponding balls are in contact with each other.
We can embed $G$ into $\mathbb{R}^d$ by positioning each vertex at the center of the ball it corresponds to.
As the balls form a packing, the minimum distance between any two vertices is $1$, and a pair of vertices has an edge if and only if the distance between them is exactly $1$. 
An embedded graph of this form is called a minimum distance graph.
It is immediate that, up to scaling the set of points so that the minimum distance has unit length, every minimum distance graph in $\mathbb{R}^d$ is the embedded contact graph of a packing of unit-diameter balls in $\mathbb{R}^d$.

The contact number of the packing is the number of pairs of balls that are in contact with each other, or equivalently the number of edges in the contact graph $G$.
The ``\textit{contact number problem}" is to determine, for each $n$, the maximum contact number a packing of $n$ unit-diameter balls in $\mathbb{R}^d$ can have.
We denote this maximum value for each $n$ by $c(n,d)$.
We call a packing of $n$ unit-diameter balls in $\mathbb{R}^d$ and its contact graph extremal if it has a contact number of $c(n,d)$.
Determining $c(n,d)$ is equivalent to Erd\H{o}s's repeated minimum distance problem, which asks for the maximum number of times the minimum distance can occur among $n$ points in $\mathbb{R}^d$ \cites{Erdos46,Erdos75}.
A survey of the problem, its variants, and its motivation from materials sciences can be found in the extensive survey article of K. Bezdek and Khan \cite{BezKhan2018}.

In the plane, Harborth \cite{Harborth74} solved this problem completely and showed $c(n,2)= \floor{3n-\sqrt{12n-3}}$ for all $n\in \mathbb{N}$.
When $n=3k^2-3k+1$ for some $k\geq 1$, a packing that has a contact number of $c(n,2)$ can be constructed by placing the center point of each disk on the unit-length triangular lattice, in such a way that the convex hull of the center points of the disks forms a regular hexagon with a side length of $k-1$ and exactly $k$ center points on each side. 
For all other values of $n$, extremal packings can be obtained by adding disks along the outside of the extremal packing on $3k^2-3k+1$ disks in a spiral fashion.
An example of this is depicted in Figure \ref{Fig: Harborths Examples}.
In particular, the extremal packings can build up from one another.
That is, there exists a sequence of congruent disks in the plane such that for each $n$, the first $n$ disks form a packing with a contact number of $c(n,2)$.

\begin{figure}
\centering
\begin{subfigure}[t]{0.3\textwidth}
\begin{tikzpicture}[>=Stealth,scale=0.57]
\coordinate (e1) at (1,0);
\coordinate (e2) at (60:1);
\coordinate (e3) at (120:1);
\def\points{(0,0),(e1),(e2),(e3), ($-1*(e1)$),($-1*(e2)$),($-1*(e3)$)}
        
        \clip (-3.1,-3.1)--(3.1,-3.1)--(3.1,3.1)--(-3.1,3.1)--cycle;
        \draw (-3.1,-3.1)--(3.1,-3.1)--(3.1,3.1)--(-3.1,3.1)--cycle;
    \foreach \x in {-4,...,4} 
    {
    \foreach \y in {-4,...,4}
    {
    \coordinate (cur) at ($\x*(e1)+\y*(e2)$);
    \filldraw[draw=gray,fill=gray] (cur) circle [radius=0.025];
    }
    }
    \foreach \p in \points{
    \filldraw[draw=black, fill=black] \p circle [radius=0.05];
    \draw[draw=black] \p circle [radius=0.5];
    \draw[very thin,draw=black] (0,0)--\p;
    }
    \draw[very thin,draw=black] (e1)--(e2)--(e3)--($-1*(e1)$)--($-1*(e2)$)--($-1*(e3)$)--cycle;

\end{tikzpicture}
        \caption{A packing for $n=7$}
    \end{subfigure}
    \hfill
    \begin{subfigure}[t]{0.3\textwidth}
\begin{tikzpicture}[>=Stealth,scale=0.57]
\coordinate (e1) at (1,0);
\coordinate (e2) at (60:1);
\coordinate (e3) at (120:1);
\def\points{(0,0),(e1),(e2),(e3), ($-1*(e1)$),($-1*(e2)$),($-1*(e3)$)}
\def\cpoints{(0,0),(e1),(e2),(e3), ($-1*(e1)$),($-1*(e2)$),($-1*(e3)$),($(e1)+(e2)$),($2*(e2)$),($(e3)+(e2)$), ($2*(e3)$), ($(e3)-(e1)$) }
        
        \clip (-3.1,-3.1)--(3.1,-3.1)--(3.1,3.1)--(-3.1,3.1)--cycle;
        \draw (-3.1,-3.1)--(3.1,-3.1)--(3.1,3.1)--(-3.1,3.1)--cycle;
    \foreach \x in {-4,...,4} 
    {
    \foreach \y in {-4,...,4}
    {
    \coordinate (cur) at ($\x*(e1)+\y*(e2)$);
    \filldraw[draw=gray,fill=gray] (cur) circle [radius=0.025];
    }
    }
    \foreach \p in \cpoints{
    \filldraw[draw=black, fill=black] \p circle [radius=0.05];
    \draw[draw=black] \p circle [radius=0.5];
    }
    \foreach \p in \points{
    \draw[very thin,draw=black] (0,0)--\p;
    \draw[very thin,draw=black] (e2)--+\p;
    \draw[very thin,draw=black] (e3)--+\p;
    }
    \draw[very thin,draw=black] ($1*(e1)$)--+(e2)--($2*(e2)$)--($2*(e3)$)--+($-1*(e2)$)--($-1*(e1)$)--($-1*(e2)$)--($-1*(e3)$)--cycle;

\end{tikzpicture}
        \caption{A packing for $n=12$}
    \end{subfigure}
    \hfill
    \begin{subfigure}[t]{0.3\textwidth}
\begin{tikzpicture}[>=Stealth,scale=0.57]
\coordinate (e1) at (1,0);
\coordinate (e2) at (60:1);
\coordinate (e3) at (120:1);
\def\points{(0,0),(e1),(e2),(e3), ($-1*(e1)$),($-1*(e2)$),($-1*(e3)$)}
\def\cpoints{(0,0),(e1),(e2),(e3), ($-1*(e1)$),($-1*(e2)$),($-1*(e3)$),($(e1)+(e2)$),($2*(e2)$),($(e3)+(e2)$), ($2*(e3)$), ($(e3)-(e1)$),($-2*(e1)$),($-2*(e2)$),($-2*(e3)$),($2*(e1)$),($(e1)-(e3)$),($-1*(e3)-(e2)$),($-1*(e2)-(e1)$)}
        
        \clip (-3.1,-3.1)--(3.1,-3.1)--(3.1,3.1)--(-3.1,3.1)--cycle;
        \draw (-3.1,-3.1)--(3.1,-3.1)--(3.1,3.1)--(-3.1,3.1)--cycle;
    \foreach \x in {-4,...,4} 
    {
    \foreach \y in {-4,...,4}
    {
    \coordinate (cur) at ($\x*(e1)+\y*(e2)$);
    \filldraw[draw=gray,fill=gray] (cur) circle [radius=0.025];
    }
    }
    \foreach \p in \cpoints{
    \filldraw[draw=black, fill=black] \p circle [radius=0.05];
    \draw[draw=black] \p circle [radius=0.5];
    }
    \foreach \p in \points{
    \draw[very thin,draw=black] (0,0)--\p;
    \foreach \b in \points 
    \draw[very thin,draw=black] \b--+\p;
    }
    \draw[very thin,draw=black] ($2*(e1)$)--($2*(e2)$)--($2*(e3)$)--($-2*(e1)$)--($-2*(e2)$)--($-2*(e3)$)--cycle;

\end{tikzpicture}
        \caption{A packing for $n=19$}
    \end{subfigure}
    \caption{Three examples of extremal packings with contact number $c(n,2)$ and their contact graphs}
    \label{Fig: Harborths Examples}
\end{figure}

In higher dimensions the contact number problem is far from solved. 
The best general upper bound for $c(n,d)$ comes from a consequence of a result by K. Bezdek in \cite{Bez2002ConvUB}, first noted in \cite{Bez}, which states
\[
c(n,d)< \frac{k(d)}{2}n-\frac{1}{2^d}\delta_d^{-\frac{d-1}{d}}n^\frac{d-1}{d}
\]
where $k(d)$ is the kissing number of the Euclidean ball in $\mathbb{R}^d$ (i.e., the maximum number of interior-disjoint translates of $B^d$ that can touch $B^d$), and $\delta_d$ is the maximum density an infinite packing of congruent balls can have in $\mathbb{R}^d$. When this bound is applied in the case $d=3$, using the results from \cite{SCDW1953Kissing} and \cite{Hales2005} that $k(3)=12$ and $\delta_3=\tfrac{\uppi}{\sqrt{18}}$, we obtain $c(n,3)\leq 6n-\tfrac{1}{8}(\tfrac{\uppi}{\sqrt{18}})^{-\frac{2}{3}}n^\frac{2}{3}=6n-0.152\dots n^\frac{2}{3}$.

For the rest of this paper we will exclusively look at the three-dimensional contact number problem.
As a result we will denote $c(n)=c(n,3)$.
As the smallest dimension in which the contact number problem remains open, the three-dimensional case has received particular attention. 
The first substantial improvement on the upper bound of $c(n)$ stated above is found in \cite{Bez}, which has since been improved by K. Bezdek and Reid in \cite{BezReid2013} to 
\[
c(n)< 6n-0.926n^\frac{2}{3}
\]
for all $n\geq 2$. 
K. Bezdek in \cite{Bez} also found the lower bound, when $n=\frac{k(2k^2+1)}{3}$ for some $k\geq 2$, of 
\begin{align*}
6n-7.862\dots n^\frac{2}{3}= 6n-\sqrt[3]{486}n^\frac{2}{3}< c(n).
\end{align*}

This lower bound comes from a construction where the center points of the balls are positioned on the face-centered cubic lattice such that the convex hull of all the center points forms a regular octahedron with an edge length of $k-1$ and exactly $k$ center points per edge. 
The face-centered cubic lattice, also known as the $A_3$ lattice, is defined as 
\[
A_3=\{n_1(\tfrac{1}{\sqrt{2}},\tfrac{1}{\sqrt{2}},0)+n_2(-\tfrac{1}{\sqrt{2}},\tfrac{1}{\sqrt{2}},0)+n_3(0,\tfrac{1}{\sqrt{2}},\tfrac{1}{\sqrt{2}}):n_1,n_2,n_3 \in \mathbb{Z}\}.
\]
Here we have scaled the lattice from its usual definition so that the shortest non-zero lattice vector has unit length.

The determination of $c(n)$ for small values of $n$ has also received much attention. 
The trivial values of $c(1)=0$, $c(2)=1$, $c(3)=3$, $c(4)=6$, and $c(5)=9$ are known, but surprisingly, for no $n\geq 6$ has there been any rigorous determination of the value of $c(n)$. 
For small values of $n$ there have been attempts to enumerate all rigid packings of $n$ unit-diameter balls, such as \cite{EmpAMB2011} and \cite{EmpHolmes-Cerfon2016}.
However, these enumerations may be incomplete due to rounding errors in the floating-point arithmetic done to generate them.
Additionally, due to the rigidity assumptions these enumerations make, they may not contain all extremal packings of unit-diameter balls.
These approaches do give lower bounds for $c(n)$ for small values of $n$. 
In particular, they give constructions showing that $c(n)\geq 3n-6$ for $n=6,7,8,$ and $9$.
A summary of their approaches and the lower bounds they obtain for $n\geq 10$ can be found in \cite{BezKhan2018}.

The first result in this paper shows that $c(n)=3n-6$ for $n=6,7,8,$ and $9$. In the course of proving this we also enumerate the contact graphs of all extremal packings of $n$ unit-diameter balls for $n=6,7$, and $8$; this is the second result of this paper. These enumerations turn out to match the ones found in \cite{EmpAMB2011}.

\begin{theorem}\label{The: Small Packings}
    $c(n)=3n-6$ for $n=6,7,8,$ and $9$.
\end{theorem}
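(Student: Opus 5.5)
The lower bound $c(n)\geq 3n-6$ for $n=6,7,8,9$ comes from the known constructions, for instance those in \cite{EmpAMB2011}. One explicit family is obtained by gluing regular tetrahedra of unit edge face-to-face. Starting from a tetrahedron, each added ball is placed touching the three balls of an exposed triangular face, which contributes exactly three new contacts. For the small $n$ in question one can choose such gluings so that no two balls overlap; the concern is the roughly $7.36^\circ$ gap of the Boerdijk--Coxeter helix. This gives $6+3(n-4)=3n-6$ contacts. So the real content is the upper bound $c(n)\leq 3n-6$.

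For the upper bound I would split into two steps. The first is a rigidity statement: every extremal packing is minimally rigid, and in particular has at most $3n-6$ contacts, if we can show the contact graph is infinitesimally rigid with independent edge set. I would argue by contradiction.

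\emph{Dependent edges.} If an extremal contact graph had a nontrivial stress (a dependency among the edge constraints), the rigidity matrix would have a nonzero left kernel vector $\omega$. One would then look for a perturbation of the configuration that keeps all edges at length $1$ except one. That edge could be shortened toward $1$ while preserving the other contacts, and eventually a new contact would be created, increasing the count and contradicting extremality.

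\emph{Flexible graphs.} If the graph is flexible, a nontrivial motion preserving all contacts exists. Following it until some non-adjacent pair reaches distance $1$ gives a new contact, again contradicting extremality. This requires checking that the motion can be chosen so that distances do not drop below $1$, which holds by continuity until the first new contact.

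This would show that extremal packings are generically rigid with $|E|\le 3n-6$. The delicate point is that packings need not be in general position. The stress argument needs care: one has to produce an actual (not infinitesimal) motion that strictly shortens a chosen non-edge distance. I would first try using the stress to show that some edge can be released, producing a flexible $(|E|-1)$-edge framework. A motion of that framework then either re-creates that contact elsewhere or creates a new one. Either way, extremality forces a new contact, which gives $|E|+1$ contacts, a contradiction.

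If the general rigidity argument breaks down, the fallback is a direct combinatorial enumeration for $n\le 9$. Minimum distance graphs in $\mathbb{R}^3$ have strong local constraints. Every vertex has degree at most $12$. No $K_5$ occurs, and more generally no unit-distance $K_{3,3}$-type obstructions. The neighborhood of any vertex is a unit-distance graph on the unit sphere with angular separation at least $60^\circ$.

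Suppose $|E|\ge 3n-5$. Since $n\le 9$, the average degree is close to $6$. I would proceed by induction on $n$ from $c(5)=9$. Deleting a vertex of minimum degree $\delta$ gives $c(n)\le c(n-1)+\delta$, so it suffices to rule out $\delta\ge 4$ with $|E|=3n-5$.

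For a vertex of degree $\ge4$, its neighbors lie on a sphere, and the induced unit-distance graph among them is a spherical graph with edges exactly at $60^\circ$. Its structure is heavily restricted: it is planar with triangles and rhombi only. Combining these local constraints across all vertices, the task reduces to finitely many candidate graphs. Each candidate must then be shown non-realizable by explicit coordinate computation, for example by placing a tetrahedron and solving for the remaining points.

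The main obstacle is this case analysis for $n=8,9$, or equivalently the rigidity argument in the degenerate case. I expect the paper to enumerate extremal graphs for $n=6,7,8$ precisely so that $n=9$ follows by deleting a vertex and checking extensions of the enumerated $8$-ball graphs.
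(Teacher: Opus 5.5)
Your lower bound is fine. In fact the Boerdijk--Coxeter tetrahelix is a valid packing for every $n$, since vertices whose indices differ by at least $4$ are already more than $1$ apart along the axis, so the $7.36^\circ$ worry does not arise. The paper instead gets $c(n)\ge 3n-6$ from Theorem~\ref{The: Rigid}.

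\textbf{The main upper-bound argument cannot work.} Your main line is: extremal contact graphs are rigid with independent edges, hence $|E|\le 3n-6$. This uses nothing specific to $n\le 9$, so it would give $c(n)\le 3n-6$ for every $n$, which is false. A ball together with its $12$ cuboctahedral FCC neighbours has $36>33$ contacts. Bezdek's octahedral construction gives $c(n)>6n-\sqrt[3]{486}\,n^{2/3}$ for infinitely many $n$. Extremal packings typically carry self-stresses, and your stress step fails in two places:
\begin{itemize}
\item An edge in the support of a self-stress is redundant. Deleting it does not lower the rank of the rigidity matrix, so the framework stays infinitesimally rigid and no flex appears.
\item Even if a flex did appear, losing one contact and gaining another leaves $|E|$ contacts, not $|E|+1$, so extremality is not contradicted.
\end{itemize}
Also, ``minimally rigid'' in this paper means at least $3n-6$ contacts with minimum degree $3$. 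That is a lower-bound statement, not the isostatic notion you are using.

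\textbf{The fallback is the right outline but omits the substance.} It matches the paper's strategy: induct on $n$; observe that $|E|\ge 3n-5$ forces minimum degree at least $4$ because $c(n-1)=3n-9$; and require that deleting one or two vertices, leaving $3(n-k)-6$ edges, produces a graph in the already-enumerated list $\mathcal{G}_{n-k}$. But it leaves out everything that makes the theorem true. For $n\le 9$ there are trivially finitely many graphs. The real content is a set of necessary conditions strong enough that no graph with at least $3n-5$ edges survives, each proved rigorously. The paper uses:
\begin{itemize}
\item no $K_5$ or $K_{3,3}$;
\item at most five common neighbours for any pair;
\item the common neighbourhood of an adjacent pair induces a subgraph of $P_5$ (this already excludes a $W_5$ inside a vertex neighbourhood, which planarity alone does not);
\item nine specific forbidden configurations $H_1,\dots,H_9$.
\end{itemize}
Several of these configurations are not rigid. $H_1$, $H_2$, $H_3$ leave two free parameters and $H_4$, $H_5$ leave one. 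So ``place a tetrahedron and solve for the remaining points'' does not decide them. One must certify strict distance inequalities over a continuous parameter region cut out by the packing inequalities. The paper does this with interval arithmetic on a cell decomposition; floating-point checks of the kind used in earlier enumerations are not rigorous. Without identifying such obstructions and a rigorous way to verify them, your proposal does not establish $c(n)\le 3n-6$ for $n=6,\dots,9$.
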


\newsavebox{\graphA}
\newsavebox{\graphB}
\newsavebox{\graphC}
\newsavebox{\graphD}
\newsavebox{\graphE}
\newsavebox{\graphF}
\newsavebox{\graphG}
\newsavebox{\graphH}
\newsavebox{\graphI}
\newsavebox{\graphJ}
\newsavebox{\graphK}
\newsavebox{\graphL}
\newsavebox{\graphM}
\newsavebox{\graphN}
\newsavebox{\graphO}
\newsavebox{\graphP}
\newsavebox{\graphQ}
\newsavebox{\graphR}
\newsavebox{\graphS}
\newsavebox{\graphT}

\tdplotsetmaincoords{60}{30}
\sbox{\graphA}{\begin{tikzpicture}[tdplot_main_coords, scale=1.416, line cap=round, line join=round]
    \coordinate (a) at (0, 0, 0);
    \coordinate (b) at (1, 0, 0);
    \coordinate (c) at (0.5, 0.8660254038, 0);
    \coordinate (d) at (0.5, 0.2886751346, 0.8164965809);
    \coordinate (e) at (0.5, 0.2886751346, -0.8164965809);
    \coordinate (f) at (0.5, -0.6735753141, -0.544331054);
    \draw[thick] (a) -- (b);
    \draw[thick] (a) -- (c);
    \draw[thick] (b) -- (c);
    \draw[thick] (a) -- (d);
    \draw[thick] (b) -- (d);
    \draw[thick] (c) -- (d);
    \draw[thick] (a) -- (e);
    \draw[thick] (b) -- (e);
    \draw[thick] (c) -- (e);
    \draw[thick] (a) -- (f);
    \draw[thick] (b) -- (f);
    \draw[thick] (e) -- (f);
    \foreach \v in {a,b,c,d,e,f} \filldraw (\v) circle (1pt);
\end{tikzpicture}}
\tdplotsetmaincoords{60}{30}
\sbox{\graphB}{\begin{tikzpicture}[tdplot_main_coords, scale=1.416, line cap=round, line join=round]
    \coordinate (a) at (0.7071067812, 0, 0);
    \coordinate (b) at (-0.7071067812, 0, 0);
    \coordinate (c) at (0, 0.7071067812, 0);
    \coordinate (d) at (0, -0.7071067812, 0);
    \coordinate (e) at (0, 0, 0.7071067812);
    \coordinate (f) at (0, 0, -0.7071067812);
    \draw[thick] (a) -- (c);
    \draw[thick] (a) -- (d);
    \draw[thick] (a) -- (e);
    \draw[thick] (a) -- (f);
    \draw[thick] (b) -- (c);
    \draw[thick] (b) -- (d);
    \draw[thick] (b) -- (e);
    \draw[thick] (b) -- (f);
    \draw[thick] (c) -- (e);
    \draw[thick] (c) -- (f);
    \draw[thick] (d) -- (e);
    \draw[thick] (d) -- (f);
    \foreach \v in {a,b,c,d,e,f} \filldraw (\v) circle (1pt);
\end{tikzpicture}}
\tdplotsetmaincoords{60}{90}
\sbox{\graphC}{\begin{tikzpicture}[tdplot_main_coords, scale=1.416, line cap=round, line join=round]
    \coordinate (a) at (0, 0, 0);
    \coordinate (b) at (1, 0, 0);
    \coordinate (c) at (0.5, 0.8660254038, 0);
    \coordinate (d) at (0.5, 0.2886751346, 0.8164965809);
    \coordinate (e) at (0.5, 0.2886751346, -0.8164965809);
    \coordinate (f) at (0.5, -0.6735753141, -0.544331054);
    \coordinate (g) at (0.5, -0.6735753141, 0.544331054);
    \draw[thick] (a) -- (b);
    \draw[thick] (a) -- (c);
    \draw[thick] (b) -- (c);
    \draw[thick] (a) -- (d);
    \draw[thick] (b) -- (d);
    \draw[thick] (c) -- (d);
    \draw[thick] (a) -- (e);
    \draw[thick] (b) -- (e);
    \draw[thick] (c) -- (e);
    \draw[thick] (a) -- (f);
    \draw[thick] (b) -- (f);
    \draw[thick] (e) -- (f);
    \draw[thick] (a) -- (g);
    \draw[thick] (b) -- (g);
    \draw[thick] (d) -- (g);
    \foreach \v in {a,b,c,d,e,f,g} \filldraw (\v) circle (1pt);
\end{tikzpicture}}
\tdplotsetmaincoords{60}{30}
\sbox{\graphD}{\begin{tikzpicture}[tdplot_main_coords, scale=1.416, line cap=round, line join=round]
    \coordinate (a) at (0, 0, 0);
    \coordinate (b) at (1, 0, 0);
    \coordinate (c) at (0.5, 0.8660254038, 0);
    \coordinate (d) at (0.5, 0.2886751346, 0.8164965809);
    \coordinate (e) at (0.5, 0.2886751346, -0.8164965809);
    \coordinate (f) at (0.5, -0.6735753141, -0.544331054);
    \coordinate (g) at (1.333333333, 0.7698003589, -0.544331054);
    \draw[thick] (a) -- (b);
    \draw[thick] (a) -- (c);
    \draw[thick] (b) -- (c);
    \draw[thick] (a) -- (d);
    \draw[thick] (b) -- (d);
    \draw[thick] (c) -- (d);
    \draw[thick] (a) -- (e);
    \draw[thick] (b) -- (e);
    \draw[thick] (c) -- (e);
    \draw[thick] (a) -- (f);
    \draw[thick] (b) -- (f);
    \draw[thick] (e) -- (f);
    \draw[thick] (b) -- (g);
    \draw[thick] (c) -- (g);
    \draw[thick] (e) -- (g);
    \foreach \v in {a,b,c,d,e,f,g} \filldraw (\v) circle (1pt);
\end{tikzpicture}}
\tdplotsetmaincoords{60}{30}
\sbox{\graphE}{\begin{tikzpicture}[tdplot_main_coords, scale=1.416, line cap=round, line join=round]
    \coordinate (a) at (0, 0, 0);
    \coordinate (b) at (1, 0, 0);
    \coordinate (c) at (0.5, 0.8660254038, 0);
    \coordinate (d) at (0.5, 0.2886751346, 0.8164965809);
    \coordinate (e) at (0.5, 0.2886751346, -0.8164965809);
    \coordinate (f) at (0.5, -0.6735753141, -0.544331054);
    \coordinate (g) at (-0.3333333333, 0.7698003589, 0.544331054);
    \draw[thick] (a) -- (b);
    \draw[thick] (a) -- (c);
    \draw[thick] (b) -- (c);
    \draw[thick] (a) -- (d);
    \draw[thick] (b) -- (d);
    \draw[thick] (c) -- (d);
    \draw[thick] (a) -- (e);
    \draw[thick] (b) -- (e);
    \draw[thick] (c) -- (e);
    \draw[thick] (a) -- (f);
    \draw[thick] (b) -- (f);
    \draw[thick] (e) -- (f);
    \draw[thick] (a) -- (g);
    \draw[thick] (c) -- (g);
    \draw[thick] (d) -- (g);
    \foreach \v in {a,b,c,d,e,f,g} \filldraw (\v) circle (1pt);
\end{tikzpicture}}
\tdplotsetmaincoords{70}{35}
\sbox{\graphF}{\begin{tikzpicture}[tdplot_main_coords, scale=1.416, line cap=round, line join=round]
    \coordinate (a) at (0.8506508084, 0, 0);
    \coordinate (b) at (0.2628655561, 0.8090169944, 0);
    \coordinate (c) at (-0.6881909602, 0.5, 0);
    \coordinate (d) at (-0.6881909602, -0.5, 0);
    \coordinate (e) at (0.2628655561, -0.8090169944, 0);
    \coordinate (f) at (0, 0, 0.5257311121);
    \coordinate (g) at (0, 0, -0.5257311121);
    \draw[thick] (a) -- (b);
    \draw[thick] (b) -- (c);
    \draw[thick] (c) -- (d);
    \draw[thick] (d) -- (e);
    \draw[thick] (e) -- (a);
    \draw[thick] (a) -- (f);
    \draw[thick] (b) -- (f);
    \draw[thick] (c) -- (f);
    \draw[thick] (d) -- (f);
    \draw[thick] (e) -- (f);
    \draw[thick] (a) -- (g);
    \draw[thick] (b) -- (g);
    \draw[thick] (c) -- (g);
    \draw[thick] (d) -- (g);
    \draw[thick] (e) -- (g);
    \foreach \v in {a,b,c,d,e,f,g} \filldraw (\v) circle (1pt);
\end{tikzpicture}}
\tdplotsetmaincoords{60}{30}
\sbox{\graphG}{\begin{tikzpicture}[tdplot_main_coords, scale=1.416, line cap=round, line join=round]
    \coordinate (a) at (0.7071067812, 0, 0);
    \coordinate (b) at (-0.7071067812, 0, 0);
    \coordinate (c) at (0, 0.7071067812, 0);
    \coordinate (d) at (0, -0.7071067812, 0);
    \coordinate (e) at (0, 0, 0.7071067812);
    \coordinate (f) at (0, 0, -0.7071067812);
    \coordinate (g) at (0.7071067812, 0.7071067812, 0.7071067812);
    \draw[thick] (a) -- (c);
    \draw[thick] (a) -- (d);
    \draw[thick] (a) -- (e);
    \draw[thick] (a) -- (f);
    \draw[thick] (b) -- (c);
    \draw[thick] (b) -- (d);
    \draw[thick] (b) -- (e);
    \draw[thick] (b) -- (f);
    \draw[thick] (c) -- (e);
    \draw[thick] (c) -- (f);
    \draw[thick] (d) -- (e);
    \draw[thick] (d) -- (f);
    \draw[thick] (a) -- (g);
    \draw[thick] (c) -- (g);
    \draw[thick] (e) -- (g);
    \foreach \v in {a,b,c,d,e,f,g} \filldraw (\v) circle (1pt);
\end{tikzpicture}}
\tdplotsetmaincoords{60}{54}
\sbox{\graphH}{\begin{tikzpicture}[tdplot_main_coords, scale=1.416, line cap=round, line join=round]
    \coordinate (a) at (0, 0, 0);
    \coordinate (b) at (1, 0, 0);
    \coordinate (c) at (0.5, 0.8660254038, 0);
    \coordinate (d) at (0.5, 0.2886751346, 0.8164965809);
    \coordinate (e) at (0.5, 0.2886751346, -0.8164965809);
    \coordinate (f) at (0.5, -0.6735753141, -0.544331054);
    \coordinate (g) at (1.333333333, 0.7698003589, -0.544331054);
    \coordinate (h) at (-0.3333333333, 0.7698003589, -0.544331054);
    \draw[thick] (a) -- (b);
    \draw[thick] (a) -- (c);
    \draw[thick] (b) -- (c);
    \draw[thick] (a) -- (d);
    \draw[thick] (b) -- (d);
    \draw[thick] (c) -- (d);
    \draw[thick] (a) -- (e);
    \draw[thick] (b) -- (e);
    \draw[thick] (c) -- (e);
    \draw[thick] (a) -- (f);
    \draw[thick] (b) -- (f);
    \draw[thick] (e) -- (f);
    \draw[thick] (b) -- (g);
    \draw[thick] (c) -- (g);
    \draw[thick] (e) -- (g);
    \draw[thick] (a) -- (h);
    \draw[thick] (c) -- (h);
    \draw[thick] (e) -- (h);
    \foreach \v in {a,b,c,d,e,f,g,h} \filldraw (\v) circle (1pt);
\end{tikzpicture}}
\tdplotsetmaincoords{70}{50}
\sbox{\graphI}{\begin{tikzpicture}[tdplot_main_coords, scale=1.416, line cap=round, line join=round]
    \coordinate (a) at (0, 0, 0);
    \coordinate (b) at (1, 0, 0);
    \coordinate (c) at (0.5, 0.8660254038, 0);
    \coordinate (d) at (0.5, 0.2886751346, 0.8164965809);
    \coordinate (e) at (0.5, 0.2886751346, -0.8164965809);
    \coordinate (f) at (0.5, -0.6735753141, -0.544331054);
    \coordinate (g) at (0.5, -0.6735753141, 0.544331054);
    \coordinate (h) at (-0.3333333333, -0.2566001196, 0.9072184233);
    \draw[thick] (a) -- (b);
    \draw[thick] (a) -- (c);
    \draw[thick] (b) -- (c);
    \draw[thick] (a) -- (d);
    \draw[thick] (b) -- (d);
    \draw[thick] (c) -- (d);
    \draw[thick] (a) -- (e);
    \draw[thick] (b) -- (e);
    \draw[thick] (c) -- (e);
    \draw[thick] (a) -- (f);
    \draw[thick] (b) -- (f);
    \draw[thick] (e) -- (f);
    \draw[thick] (a) -- (g);
    \draw[thick] (b) -- (g);
    \draw[thick] (d) -- (g);
    \draw[thick] (a) -- (h);
    \draw[thick] (d) -- (h);
    \draw[thick] (g) -- (h);
    \foreach \v in {a,b,c,d,e,f,g,h} \filldraw (\v) circle (1pt);
\end{tikzpicture}}
\tdplotsetmaincoords{60}{30}
\sbox{\graphJ}{\begin{tikzpicture}[tdplot_main_coords, scale=1.416, line cap=round, line join=round]
    \coordinate (a) at (0, 0, 0);
    \coordinate (b) at (1, 0, 0);
    \coordinate (c) at (0.5, 0.8660254038, 0);
    \coordinate (d) at (0.5, 0.2886751346, 0.8164965809);
    \coordinate (e) at (0.5, 0.2886751346, -0.8164965809);
    \coordinate (f) at (0.5, -0.6735753141, -0.544331054);
    \coordinate (g) at (1.333333333, 0.7698003589, -0.544331054);
    \coordinate (h) at (1.333333333, 0.7698003589, 0.544331054);
    \draw[thick] (a) -- (b);
    \draw[thick] (a) -- (c);
    \draw[thick] (b) -- (c);
    \draw[thick] (a) -- (d);
    \draw[thick] (b) -- (d);
    \draw[thick] (c) -- (d);
    \draw[thick] (a) -- (e);
    \draw[thick] (b) -- (e);
    \draw[thick] (c) -- (e);
    \draw[thick] (a) -- (f);
    \draw[thick] (b) -- (f);
    \draw[thick] (e) -- (f);
    \draw[thick] (b) -- (g);
    \draw[thick] (c) -- (g);
    \draw[thick] (e) -- (g);
    \draw[thick] (b) -- (h);
    \draw[thick] (c) -- (h);
    \draw[thick] (d) -- (h);
    \foreach \v in {a,b,c,d,e,f,g,h} \filldraw (\v) circle (1pt);
\end{tikzpicture}}
\tdplotsetmaincoords{60}{30}
\sbox{\graphK}{\begin{tikzpicture}[tdplot_main_coords, scale=1.416, line cap=round, line join=round]
    \coordinate (a) at (0, 0, 0);
    \coordinate (b) at (1, 0, 0);
    \coordinate (c) at (0.5, 0.8660254038, 0);
    \coordinate (d) at (0.5, 0.2886751346, 0.8164965809);
    \coordinate (e) at (0.5, 0.2886751346, -0.8164965809);
    \coordinate (f) at (0.5, -0.6735753141, -0.544331054);
    \coordinate (g) at (1.333333333, 0.7698003589, -0.544331054);
    \coordinate (h) at (-0.3333333333, 0.7698003589, 0.544331054);
    \draw[thick] (a) -- (b);
    \draw[thick] (a) -- (c);
    \draw[thick] (b) -- (c);
    \draw[thick] (a) -- (d);
    \draw[thick] (b) -- (d);
    \draw[thick] (c) -- (d);
    \draw[thick] (a) -- (e);
    \draw[thick] (b) -- (e);
    \draw[thick] (c) -- (e);
    \draw[thick] (a) -- (f);
    \draw[thick] (b) -- (f);
    \draw[thick] (e) -- (f);
    \draw[thick] (b) -- (g);
    \draw[thick] (c) -- (g);
    \draw[thick] (e) -- (g);
    \draw[thick] (a) -- (h);
    \draw[thick] (c) -- (h);
    \draw[thick] (d) -- (h);
    \foreach \v in {a,b,c,d,e,f,g,h} \filldraw (\v) circle (1pt);
\end{tikzpicture}}
\tdplotsetmaincoords{60}{30}
\sbox{\graphL}{\begin{tikzpicture}[tdplot_main_coords, scale=1.416, line cap=round, line join=round]
    \coordinate (a) at (0.7071067812, 0, 0);
    \coordinate (b) at (-0.7071067812, 0, 0);
    \coordinate (c) at (0, 0.7071067812, 0);
    \coordinate (d) at (0, -0.7071067812, 0);
    \coordinate (e) at (0, 0, 0.7071067812);
    \coordinate (f) at (0, 0, -0.7071067812);
    \coordinate (g) at (0.7071067812, 0.7071067812, 0.7071067812);
    \coordinate (h) at (0.7071067812, 0.7071067812, -0.7071067812);
    \draw[thick] (a) -- (c);
    \draw[thick] (a) -- (d);
    \draw[thick] (a) -- (e);
    \draw[thick] (a) -- (f);
    \draw[thick] (b) -- (c);
    \draw[thick] (b) -- (d);
    \draw[thick] (b) -- (e);
    \draw[thick] (b) -- (f);
    \draw[thick] (c) -- (e);
    \draw[thick] (c) -- (f);
    \draw[thick] (d) -- (e);
    \draw[thick] (d) -- (f);
    \draw[thick] (a) -- (g);
    \draw[thick] (c) -- (g);
    \draw[thick] (e) -- (g);
    \draw[thick] (a) -- (h);
    \draw[thick] (c) -- (h);
    \draw[thick] (f) -- (h);
    \foreach \v in {a,b,c,d,e,f,g,h} \filldraw (\v) circle (1pt);
\end{tikzpicture}}
\tdplotsetmaincoords{40}{90}
\sbox{\graphM}{\begin{tikzpicture}[tdplot_main_coords, scale=1.416, line cap=round, line join=round]
    \coordinate (a) at (0, 0, 0);
    \coordinate (b) at (1, 0, 0);
    \coordinate (c) at (0.5, 0.8660254038, 0);
    \coordinate (d) at (0.5, 0.2886751346, 0.8164965809);
    \coordinate (e) at (0.5, 0.2886751346, -0.8164965809);
    \coordinate (f) at (0.5, -0.6735753141, -0.544331054);
    \coordinate (g) at (0.5, -0.6735753141, 0.544331054);
    \coordinate (h) at (1.289473684, -0.9571859726, 0);
    \draw[thick] (a) -- (b);
    \draw[thick] (a) -- (c);
    \draw[thick] (b) -- (c);
    \draw[thick] (a) -- (d);
    \draw[thick] (b) -- (d);
    \draw[thick] (c) -- (d);
    \draw[thick] (a) -- (e);
    \draw[thick] (b) -- (e);
    \draw[thick] (c) -- (e);
    \draw[thick] (a) -- (f);
    \draw[thick] (b) -- (f);
    \draw[thick] (e) -- (f);
    \draw[thick] (a) -- (g);
    \draw[thick] (b) -- (g);
    \draw[thick] (d) -- (g);
    \draw[thick] (b) -- (h);
    \draw[thick] (f) -- (h);
    \draw[thick] (g) -- (h);
    \foreach \v in {a,b,c,d,e,f,g,h} \filldraw (\v) circle (1pt);
\end{tikzpicture}}
\tdplotsetmaincoords{60}{30}
\sbox{\graphN}{\begin{tikzpicture}[tdplot_main_coords, scale=1.416, line cap=round, line join=round]
    \coordinate (a) at (0.8506508084, 0, 0);
    \coordinate (b) at (0.2628655561, 0.8090169944, 0);
    \coordinate (c) at (-0.6881909602, 0.5, 0);
    \coordinate (d) at (-0.6881909602, -0.5, 0);
    \coordinate (e) at (0.2628655561, -0.8090169944, 0);
    \coordinate (f) at (0, 0, 0.5257311121);
    \coordinate (g) at (0, 0, -0.5257311121);
    \coordinate (h) at (0.7721727581, 0.5610163478, 0.8240763637);
    \draw[thick] (a) -- (b);
    \draw[thick] (b) -- (c);
    \draw[thick] (c) -- (d);
    \draw[thick] (d) -- (e);
    \draw[thick] (e) -- (a);
    \draw[thick] (a) -- (f);
    \draw[thick] (b) -- (f);
    \draw[thick] (c) -- (f);
    \draw[thick] (d) -- (f);
    \draw[thick] (e) -- (f);
    \draw[thick] (a) -- (g);
    \draw[thick] (b) -- (g);
    \draw[thick] (c) -- (g);
    \draw[thick] (d) -- (g);
    \draw[thick] (e) -- (g);
    \draw[thick] (a) -- (h);
    \draw[thick] (b) -- (h);
    \draw[thick] (f) -- (h);
    \foreach \v in {a,b,c,d,e,f,g,h} \filldraw (\v) circle (1pt);
\end{tikzpicture}}
\tdplotsetmaincoords{60}{30}
\sbox{\graphO}{\begin{tikzpicture}[tdplot_main_coords, scale=1.416, line cap=round, line join=round]
    \coordinate (a) at (0.7071067812, 0, 0);
    \coordinate (b) at (-0.7071067812, 0, 0);
    \coordinate (c) at (0, 0.7071067812, 0);
    \coordinate (d) at (0, -0.7071067812, 0);
    \coordinate (e) at (0, 0, 0.7071067812);
    \coordinate (f) at (0, 0, -0.7071067812);
    \coordinate (g) at (0.7071067812, 0.7071067812, 0.7071067812);
    \coordinate (h) at (-0.7071067812, -0.7071067812, 0.7071067812);
    \draw[thick] (a) -- (c);
    \draw[thick] (a) -- (d);
    \draw[thick] (a) -- (e);
    \draw[thick] (a) -- (f);
    \draw[thick] (b) -- (c);
    \draw[thick] (b) -- (d);
    \draw[thick] (b) -- (e);
    \draw[thick] (b) -- (f);
    \draw[thick] (c) -- (e);
    \draw[thick] (c) -- (f);
    \draw[thick] (d) -- (e);
    \draw[thick] (d) -- (f);
    \draw[thick] (a) -- (g);
    \draw[thick] (c) -- (g);
    \draw[thick] (e) -- (g);
    \draw[thick] (b) -- (h);
    \draw[thick] (d) -- (h);
    \draw[thick] (e) -- (h);
    \foreach \v in {a,b,c,d,e,f,g,h} \filldraw (\v) circle (1pt);
\end{tikzpicture}}
\tdplotsetmaincoords{60}{30}
\sbox{\graphP}{\begin{tikzpicture}[tdplot_main_coords, scale=1.416, line cap=round, line join=round]
    \coordinate (a) at (0.7071067812, 0, 0);
    \coordinate (b) at (-0.7071067812, 0, 0);
    \coordinate (c) at (0, 0.7071067812, 0);
    \coordinate (d) at (0, -0.7071067812, 0);
    \coordinate (e) at (0, 0, 0.7071067812);
    \coordinate (f) at (0, 0, -0.7071067812);
    \coordinate (g) at (0.7071067812, 0.7071067812, 0.7071067812);
    \coordinate (h) at (0.9428090416, 0.9428090416, -0.2357022604);
    \draw[thick] (a) -- (c);
    \draw[thick] (a) -- (d);
    \draw[thick] (a) -- (e);
    \draw[thick] (a) -- (f);
    \draw[thick] (b) -- (c);
    \draw[thick] (b) -- (d);
    \draw[thick] (b) -- (e);
    \draw[thick] (b) -- (f);
    \draw[thick] (c) -- (e);
    \draw[thick] (c) -- (f);
    \draw[thick] (d) -- (e);
    \draw[thick] (d) -- (f);
    \draw[thick] (a) -- (g);
    \draw[thick] (c) -- (g);
    \draw[thick] (e) -- (g);
    \draw[thick] (a) -- (h);
    \draw[thick] (c) -- (h);
    \draw[thick] (g) -- (h);
    \foreach \v in {a,b,c,d,e,f,g,h} \filldraw (\v) circle (1pt);
\end{tikzpicture}}
\tdplotsetmaincoords{60}{30}
\sbox{\graphQ}{\begin{tikzpicture}[tdplot_main_coords, scale=1.416, line cap=round, line join=round]
    \coordinate (a) at (0, 0, 0);
    \coordinate (b) at (1, 0, 0);
    \coordinate (c) at (0.5, 0.8660254038, 0);
    \coordinate (d) at (0.5, 0.2886751346, 0.8164965809);
    \coordinate (e) at (0.5, 0.2886751346, -0.8164965809);
    \coordinate (f) at (0.5, -0.6735753141, -0.544331054);
    \coordinate (g) at (-0.3333333333, 0.7698003589, 0.544331054);
    \coordinate (h) at (-0.3888888889, -0.1603750748, 0.9072184233);
    \draw[thick] (a) -- (b);
    \draw[thick] (a) -- (c);
    \draw[thick] (b) -- (c);
    \draw[thick] (a) -- (d);
    \draw[thick] (b) -- (d);
    \draw[thick] (c) -- (d);
    \draw[thick] (a) -- (e);
    \draw[thick] (b) -- (e);
    \draw[thick] (c) -- (e);
    \draw[thick] (a) -- (f);
    \draw[thick] (b) -- (f);
    \draw[thick] (e) -- (f);
    \draw[thick] (a) -- (g);
    \draw[thick] (c) -- (g);
    \draw[thick] (d) -- (g);
    \draw[thick] (a) -- (h);
    \draw[thick] (d) -- (h);
    \draw[thick] (g) -- (h);
    \foreach \v in {a,b,c,d,e,f,g,h} \filldraw (\v) circle (1pt);
\end{tikzpicture}}
\tdplotsetmaincoords{60}{30}
\sbox{\graphR}{\begin{tikzpicture}[tdplot_main_coords, scale=1.416, line cap=round, line join=round]
    \coordinate (a) at (0, 0, 0);
    \coordinate (b) at (1, 0, 0);
    \coordinate (c) at (0.5, 0.8660254038, 0);
    \coordinate (d) at (0.5, 0.2886751346, 0.8164965809);
    \coordinate (e) at (0.5, 0.2886751346, -0.8164965809);
    \coordinate (f) at (0.5, -0.6735753141, -0.544331054);
    \coordinate (g) at (-0.3333333333, 0.7698003589, 0.544331054);
    \coordinate (h) at (1.333333333, -0.2566001196, -0.9072184233);
    \draw[thick] (a) -- (b);
    \draw[thick] (a) -- (c);
    \draw[thick] (b) -- (c);
    \draw[thick] (a) -- (d);
    \draw[thick] (b) -- (d);
    \draw[thick] (c) -- (d);
    \draw[thick] (a) -- (e);
    \draw[thick] (b) -- (e);
    \draw[thick] (c) -- (e);
    \draw[thick] (a) -- (f);
    \draw[thick] (b) -- (f);
    \draw[thick] (e) -- (f);
    \draw[thick] (a) -- (g);
    \draw[thick] (c) -- (g);
    \draw[thick] (d) -- (g);
    \draw[thick] (b) -- (h);
    \draw[thick] (e) -- (h);
    \draw[thick] (f) -- (h);
    \foreach \v in {a,b,c,d,e,f,g,h} \filldraw (\v) circle (1pt);
\end{tikzpicture}}
\tdplotsetmaincoords{60}{30}
\sbox{\graphS}{\begin{tikzpicture}[tdplot_main_coords, scale=1.416, line cap=round, line join=round]
    \coordinate (a) at (0.7071067812, 0, 0);
    \coordinate (b) at (-0.7071067812, 0, 0);
    \coordinate (c) at (0, 0.7071067812, 0);
    \coordinate (d) at (0, -0.7071067812, 0);
    \coordinate (e) at (0, 0, 0.7071067812);
    \coordinate (f) at (0, 0, -0.7071067812);
    \coordinate (g) at (0.7071067812, 0.7071067812, 0.7071067812);
    \coordinate (h) at (-0.7071067812, -0.7071067812, -0.7071067812);
    \draw[thick] (a) -- (c);
    \draw[thick] (a) -- (d);
    \draw[thick] (a) -- (e);
    \draw[thick] (a) -- (f);
    \draw[thick] (b) -- (c);
    \draw[thick] (b) -- (d);
    \draw[thick] (b) -- (e);
    \draw[thick] (b) -- (f);
    \draw[thick] (c) -- (e);
    \draw[thick] (c) -- (f);
    \draw[thick] (d) -- (e);
    \draw[thick] (d) -- (f);
    \draw[thick] (a) -- (g);
    \draw[thick] (c) -- (g);
    \draw[thick] (e) -- (g);
    \draw[thick] (b) -- (h);
    \draw[thick] (d) -- (h);
    \draw[thick] (f) -- (h);
    \foreach \v in {a,b,c,d,e,f,g,h} \filldraw (\v) circle (1pt);
\end{tikzpicture}}
\tdplotsetmaincoords{80}{80}
\sbox{\graphT}{\begin{tikzpicture}[tdplot_main_coords, scale=1.416, line cap=round, line join=round]
    \coordinate (a) at (0, 0, 0);
    \coordinate (b) at (1.719939178, 0, 0);
    \coordinate (c) at (0.2907079543, 0.9568118338, 0);
    \coordinate (d) at (1.429231224, 0.08832574152, -0.9527263241);
    \coordinate (e) at (0.859969589, 0.2612843938, 0.438386555);
    \coordinate (f) at (0.4851980381, 0.3751511616, -0.7898382555);
    \coordinate (g) at (0.859969589, -0.4123948506, -0.3006373118);
    \coordinate (h) at (1.23474114, 0.8210968704, -0.3006373118);
    \draw[thick] (a) -- (c);
    \draw[thick] (a) -- (e);
    \draw[thick] (a) -- (f);
    \draw[thick] (a) -- (g);
    \draw[thick] (b) -- (d);
    \draw[thick] (b) -- (e);
    \draw[thick] (b) -- (g);
    \draw[thick] (b) -- (h);
    \draw[thick] (c) -- (e);
    \draw[thick] (c) -- (f);
    \draw[thick] (c) -- (h);
    \draw[thick] (d) -- (f);
    \draw[thick] (d) -- (g);
    \draw[thick] (d) -- (h);
    \draw[thick] (e) -- (g);
    \draw[thick] (e) -- (h);
    \draw[thick] (f) -- (g);
    \draw[thick] (f) -- (h);
    \foreach \v in {a,b,c,d,e,f,g,h} \filldraw (\v) circle (1pt);
\end{tikzpicture}}

\begin{figure}[p]
  \centering
  \resizebox{\textwidth}{!}{
  \begin{tikzpicture}[
      box/.style={inner sep=2pt, fill=white},
      grow arrow/.style={-{Stealth[length=3mm,width=2.2mm]},
                         line width=0.7pt, black!62, rounded corners=3pt, shorten >=1.2mm}]
    \node[box] (G1) at (4.00, 0.00) {\usebox{\graphA}};
    \node[box] (G2) at (12.00, 0.00) {\usebox{\graphB}};
    \node[box] (G3) at (0.00, -7.40) {\usebox{\graphC}};
    \node[box] (G4) at (8.00, -7.40) {\usebox{\graphD}};
    \node[box] (G5) at (4.00, -7.40) {\usebox{\graphE}};
    \node[box] (G6) at (0.00, -3.70) {\usebox{\graphF}};
    \node[box] (G7) at (12.00, -3.70) {\usebox{\graphG}};
    \node[box] (G8) at (8.00, -14.80) {\usebox{\graphH}};
    \node[box] (G9) at (0.00, -11.10) {\usebox{\graphI}};
    \node[box] (G10) at (4.00, -11.10) {\usebox{\graphJ}};
    \node[box] (G11) at (8.00, -11.10) {\usebox{\graphK}};
    \node[box] (G12) at (12.00, -11.10) {\usebox{\graphL}};
    \node[box] (G13) at (0.00, -14.80) {\usebox{\graphM}};
    \node[box] (G14) at (0.00, -18.50) {\usebox{\graphN}};
    \node[box] (G15) at (12.00, -14.80) {\usebox{\graphO}};
    \node[box] (G16) at (12.00, -18.50) {\usebox{\graphP}};
    \node[box] (G17) at (4.00, -14.80) {\usebox{\graphQ}};
    \node[box] (G18) at (4.00, -18.50) {\usebox{\graphR}};
    \node[box] (G19) at (12.00, -22.20) {\usebox{\graphS}};
    \node[box] (G20) at (8.00, -22.20) {\usebox{\graphT}};
    \begin{pgfonlayer}{behind}
      \draw[grow arrow] ([xshift=-4.5mm]G1.south) -- ([xshift=0.0mm]G3.north);
      \draw[grow arrow] ([xshift=0.0mm]G1.south) -- ([xshift=0.0mm]G5.north);
      \draw[grow arrow] ([xshift=4.5mm]G1.south) -- ([xshift=0.0mm]G4.north);
      \draw[grow arrow] ([xshift=0.0mm]G2.south) -- ([xshift=0.0mm]G7.north);
      \draw[grow arrow] ([xshift=-6.8mm]G7.south) -- ([xshift=0.0mm]G12.north);
      \draw[grow arrow] ([xshift=-4.5mm]G3.south) -- ([xshift=-2.5mm]G9.north);
      \draw[grow arrow] ([xshift=-4.5mm]G5.south) -- ([xshift=2.5mm]G9.north);
      \draw[grow arrow] ([xshift=4.5mm]G3.south) -- ([xshift=-5.0mm]G10.north);
      \draw[grow arrow] ([xshift=-2.2mm]G5.south) -- ([xshift=0.0mm]G10.north);
      \draw[grow arrow] ([xshift=-4.5mm]G4.south) -- ([xshift=5.0mm]G10.north);
      \draw[grow arrow] ([xshift=4.5mm]G5.south) -- ([xshift=-2.5mm]G11.north);
      \draw[grow arrow] ([xshift=0.0mm]G4.south) -- ([xshift=2.5mm]G11.north);
      \draw[grow arrow] ([xshift=0.0mm]G6.south) -- (0.00,-5.42) -- (-2.20,-5.42) -- (-2.20,-18.50) -- (G14.west);
      \draw[grow arrow] ([xshift=0.0mm]G3.south) -- (0.00,-9.12) -- (2.30,-9.12) -- (2.30,-14.80) -- (G13.east);
      \draw[grow arrow] ([xshift=0.0mm]G5.south) -- (4.00,-9.12) -- (5.95,-9.12) -- (5.95,-14.80) -- (G17.east);
      \draw[grow arrow] ([xshift=4.5mm]G4.south) -- (8.45,-9.12) -- (10.10,-9.12) -- (10.10,-14.80) -- (G8.east);
      \draw[grow arrow] ([xshift=-2.2mm]G7.south) -- (11.78,-5.42) -- (14.30,-5.42) -- (14.30,-14.80) -- (G15.east);
      \draw[grow arrow] ([xshift=2.2mm]G5.south) -- (4.22,-9.38) -- (6.35,-9.38) -- (6.35,-18.50) -- (G18.east);
      \draw[grow arrow] ([xshift=2.2mm]G7.south) -- (12.22,-5.68) -- (14.90,-5.68) -- (14.90,-18.50) -- (G16.east);
      \draw[grow arrow] ([xshift=6.8mm]G7.south) -- (12.68,-5.94) -- (15.50,-5.94) -- (15.50,-22.20) -- (G19.east);
    \end{pgfonlayer}
  \end{tikzpicture}}
  \caption{All extremal $n$-vertex contact graphs for $n=6$ (two graphs in the top row), $n=7$ (five graphs in rows two and three), and $n=8$ (thirteen graphs in the bottom four rows); an arrow from graph $G$ to $G'$ indicates $G$ can be obtained from $G'$ by deleting a vertex}
  \label{Fig: All}
\end{figure}

\begin{theorem}\label{The: Enumeration}
    Figure \ref{Fig: All} depicts all extremal contact graphs of $n$ unit-diameter balls for $n=6,7,$ and $8$.
    
\end{theorem}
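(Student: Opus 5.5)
Theorem~\ref{The: Enumeration} should be proved together with Theorem~\ref{The: Small Packings}, by induction on $n$, using the fact that an extremal packing is minimally rigid (the first result announced in the abstract). Since $c(n)=3n-6$, an extremal contact graph $G$ on $n\le 8$ vertices has exactly $3n-6$ edges. The first step is a degree bound. The average degree is $6-12/n<6$, so some vertex $v$ has degree at most $5$. If $\deg v\le 2$, then $G-v$ has more than $3(n-1)-6$ edges, contradicting $c(n-1)=3(n-1)-6$. If $\deg v\ge 4$ for every vertex, counting edges gives $2(3n-6)\ge 4n$, i.e.\ $n\ge 6$. That case is possible, but the extra edges are then constrained: for $n=6$ a $4$-regular graph on $6$ vertices is the octahedron $K_{2,2,2}$, which is realized by the regular octahedron.

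The main route is deletion of a degree-$3$ vertex. If $\deg v=3$, then $G-v$ has $3(n-1)-6$ edges and is therefore extremal on $n-1$ vertices, so it appears in the list for $n-1$. Hence every extremal $G$ with a degree-$3$ vertex arises from a smaller extremal graph by attaching a new unit-distance vertex to three existing vertices. The three neighbours of the new vertex must be pairwise at distance at most $2$, and the new point (one of at most two candidates, the apices over that triangle) must be at distance at least $1$ from every other center. So from each graph in Figure~\ref{Fig: All} for $n-1$ (starting from the two graphs for $n=6$, with the base $c(5)=9$ given by the triangular bipyramid), I would enumerate all vertex triples, compute both apex positions explicitly, check the packing constraint, and identify the resulting graph up to isomorphism. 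Extra coincidental contacts would raise the edge count beyond $3n-6$, which is impossible by Theorem~\ref{The: Small Packings}. This is a finite computation with explicit algebraic coordinates, since each base graph is rigid and, by minimal rigidity, its realization is locally unique. It should reproduce the arrows in the figure.

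The remaining case, and the main obstacle, is graphs with minimum degree at least $4$, such as the pentagonal bipyramid for $n=7$ and the last graph for $n=8$, which is not obtained by an arrow. Here I would enumerate combinatorially all graphs with $3n-6$ edges and $\delta\ge 4$ on $7$ or $8$ vertices. I would then discard those not realizable as unit-distance graphs with all other distances at least $1$, using local constraints: each vertex's neighbours form a spherical code of angular separation at least $60^\circ$ on its contact sphere, the neighbourhood graphs must be planar unit-arc configurations, $K_4$ appears only as a regular tetrahedron, and a vertex with $5$ neighbours has a restricted local structure. Showing that the survivors are realizable, and that their realizations are the ones drawn, finishes the enumeration. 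I expect this realizability analysis of the $\delta\ge4$ graphs on $8$ vertices to be the hardest part. My first attempt would be to classify by neighbourhood structure around a maximum-degree vertex and solve for the coordinates explicitly.
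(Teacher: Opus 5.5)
\textbf{Verdict.} Your degree-$3$ reduction is correct, and it is a genuinely different route from the paper's. The paper runs one uniform computer search over all $n$-vertex graphs with $3n-6$ edges. It prunes with $K_5$, $K_{3,3}$, the common-neighbourhood rules, the hereditary condition for $k=1,2$, and the forbidden subgraphs $H_1,\dots,H_8$ certified by interval arithmetic. It gets realizability only by matching the counts $2,5,13$ with Arkus--Manoharan--Brenner. In the degree-$3$ case your exact apex computation makes forbidden subgraphs unnecessary and produces the realizations constructively, which is a real advantage. But there are two problems.

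\textbf{The case $\delta(G)\ge 4$ is only announced, not argued.} The constraints you list live at a single vertex and are not shown to eliminate anything.
\begin{itemize}
\item For $n=7$ the candidates are the complements of the six graphs on $7$ vertices with $6$ edges and maximum degree at most $2$. Four of them contain $K_{3,3}$, which is not on your list. The cone over the triangular prism needs an actual spherical-code argument; it contains $H_3$.
\item For $n=8$ the complements have $10$ edges and maximum degree at most $3$. Obstructions of the kind the paper uses are global and carry continuous degrees of freedom. In $H_1,H_2$ two vertices rotate about edges of a tetrahedron. In $H_4,H_5$ a non-adjacent pair at unknown distance $s\in[1,\sqrt2]$ has four common neighbours on a circle.
\item In such configurations there are no explicit coordinates to ``solve for.'' You must certify strict inequalities over a parameter range, which is exactly what the paper's interval-arithmetic lemmas do.
\end{itemize}
Until this elimination is carried out exhaustively, the claim that the arrowless $8$-vertex graph of Figure~\ref{Fig: All} is the only $\delta\ge4$ survivor is unproved. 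You also still owe an explicit realization of that graph.

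Note too that you import $c(n)=3n-6$ from Theorem~\ref{The: Small Packings}, which the paper proves by the same search. If you wanted to prove it yourself, your own degree argument reduces it to the same $\delta\ge4$ analysis, now for graphs with at least $3n-5$ edges.

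\textbf{The uniqueness claim for base realizations is not justified.} You write that ``each base graph is rigid and, by minimal rigidity, its realization is locally unique.'' Minimal rigidity in this paper is purely combinatorial: at least $3n-6$ contacts and minimum degree at least $3$. It says nothing about the space of realizations. Moreover, the apex step needs more than local uniqueness. It needs the complete list of realizations of $G-v$ up to congruence, because a triple may admit a valid apex in one realization and not in another.

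The fix is to run the induction on configurations rather than on graphs. For $n\le7$ the uniqueness is easy to supply:
\begin{itemize}
\item Every listed graph with a degree-$3$ vertex arises by an attachment whose second apex is either an existing vertex or too close. For example, the inner apex over a face of the regular octahedron is the centroid of the opposite face.
\item The octahedron and the pentagonal bipyramid are forced. The common neighbours of the two apices lie on one circle. A closed cycle of unit chords on that circle, with non-consecutive points at distance at least $1$, must wind exactly once, so it is the regular square or pentagon.
\end{itemize}
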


A packing of $n$ unit-diameter balls is said to be \textit{minimally rigid} \cite{EmpAMB2011} if it has a contact number of at least $3n-6$ and every ball is in contact with at least three others. The third result of this paper shows extremal packings are minimally rigid. This, together with Theorem \ref{The: Small Packings}, resolves a conjecture of K. Bezdek and Khan found in \cite{BezKhan2018}.

\begin{theorem}\label{The: Rigid}
    For $n\geq 4$, every extremal packing of $n$ unit-diameter balls in $\mathbb{R}^3$ is minimally rigid. 
\end{theorem}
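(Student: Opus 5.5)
The statement has two parts: an extremal packing $P$ of $n\ge 4$ balls has contact number at least $3n-6$, and every ball of $P$ touches at least three others. I would derive both from a single incremental inequality,
\[
c(n)\ \ge\ c(n-1)+3 \qquad (n\ge 4), \tag{$\ast$}
\]
applied to an extremal packing. The first part then follows by induction from $c(3)=3$. For the second part, suppose a ball $B$ of $P$ has at most two contacts. Deleting $B$ leaves a packing of $n-1$ balls with at least $c(n)-2$ contacts, so $c(n-1)\ge c(n)-2$. This contradicts $(\ast)$, which gives $c(n-1)\le c(n)-3$.

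It therefore suffices to show the following: given any packing $Q$ of $m\ge 3$ unit-diameter balls, we can add one more ball touching at least three balls of $Q$ without overlapping any of them. Applying this to an extremal $Q$ with $m=n-1$ gives $(\ast)$. As a sanity check on the first part alone, the Boerdijk--Coxeter tetrahelix gives $c(n)\ge 3n-6$ directly: each new ball touches the previous three and no others come within distance $1$.

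The addition is a rolling argument on the centers. Let $C$ be the set of centers of $Q$. A position $x$ is feasible if $|x-c|\ge 1$ for all $c\in C$, and it touches $c$ if $|x-c|=1$.
\begin{enumerate}
\item Start at an extreme point: choose a direction $u$, let $p$ be a center maximizing $\langle c,u\rangle$, and put $x=p+u$. This position is feasible and touches $p$.
\item Keeping $|x-p|=1$, move $x$ along a great circle of the unit sphere about $p$ toward another center $q$, stopping at the first new contact. A contact must occur, since the great circle reaches the point of the sphere at distance $1$ from $q$ (this is where $m\ge 2$ is used), so $x$ now touches two centers $p,q$.
\item Rotate $x$ about the axis $pq$, keeping both contacts, until a third contact occurs.
\end{enumerate}

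Step 3 is the main obstacle. The circle $\{x:|x-p|=|x-q|=1\}$ has radius at least $\sqrt{3}/2$ when $|p-q|\le 2$, and it may avoid every other unit ball entirely. This happens, for example, if all remaining centers lie far from the axis, in which case the third contact never occurs. To handle this I would do the following.
\begin{itemize}
\item First show that an extremal packing has a connected contact graph. If it had two components, translating one of them toward the other until a new contact appears would increase the contact number.
\item In step 2, choose $q$ adjacent to $p$, and choose $p,q$ so that they have a third neighbour $s$. Connectedness and $m\ge 3$ allow this.
\item Then use the freedom of the first rolling stage. Rather than following a fixed great circle, roll about $p$ toward $s$ while staying on the side that is closer to $q$. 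This should force the triple contact $\{p,q,s\}$ or an earlier blocking contact. The point is that the set of positions touching $p$ and lying within distance $1$ of $q$ or $s$ is a connected region of the sphere about $p$ whose boundary arcs meet.
\end{itemize}
If that geometric claim proves too delicate, the fallback is to argue via the first contact of a translated ball with the full union of the $1$-neighbourhoods of the centers. At a lowest point of the boundary of this union, in a generic direction, one can check whether at least three spheres meet. Making this step rigorous for arbitrary, not necessarily extremal, configurations is where I expect most of the work to lie.
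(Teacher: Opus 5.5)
Your reduction to $(\ast)$, and the deduction of both halves of minimal rigidity from it, match the paper exactly. The tetrahelix remark is a valid independent proof of $c(n)\ge 3n-6$, though it does not give the degree condition. The gap is in the step you reduce to, which is false as stated. For three balls with collinear centers $0,e_1,2e_1$, or three pairwise distant balls, no point is at distance $1$ from three centers.

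Your repair extracts only connectedness from extremality, and that is not enough. If $q,s$ are neighbours of $p$ with $\angle qps>120^\circ$, then on the unit sphere about $p$ the forbidden $60^\circ$ caps around $q$ and $s$ are disjoint. So no position touches $p,q,s$, and the claimed region ``whose boundary arcs meet'' does not exist. If $s$ is only a neighbour of $q$, its cap on $p$'s sphere can shrink to a point inside $q$'s cap (take $p,q,s$ collinear). The fallback fails too: the lowest boundary point of the union of the $1$-neighbourhoods, in a generic direction, lies on a single sphere.

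The missing idea is to run $(\ast)$ as an induction, with base case $c(4)=c(3)+3$. By your own deletion argument, every ball of an extremal $m$-packing with $m\ge 4$ then has at least three contacts. So the ball $p$ you first touch (your extreme-point start is fine) has neighbours $z_1,z_2,z_3$ on the unit sphere about it. The identity $0\le\|z_1+z_2+z_3\|^2=3+2\sum_{i<j}\langle z_i,z_j\rangle$ forces two of them to be within $120^\circ$ of each other, so their $60^\circ$ caps overlap.

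You then still need a genuine argument in place of rolling. The union of all forbidden caps on $p$'s sphere misses your feasible starting point. These are the $60^\circ$ caps of neighbours and the smaller caps of centers at distance in $(1,2]$. Hence the component $C$ containing the two overlapping caps has a boundary point $b$. If $b$ lies on only one cap boundary $\bd{S_m}$, that circle must cross another cap boundary. Otherwise $C=S_m$, which is impossible because a cap of radius at most $60^\circ$ cannot contain two distinct $60^\circ$ caps. The crossing nearest to $b$ along $\bd{S_m}$ lies in no cap's interior and gives a position with three contacts, which is what $(\ast)$ needs.
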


K. Bezdek in \cite{Bez} considered a restricted problem where the center point of each ball in the packing lies on some lattice $\Lambda\subseteq\mathbb{R}^3$ in which the shortest non-zero lattice vector has a length of $1$.
For each such lattice $\Lambda$, let $c_{\Lambda}(n)$ be the maximum contact number a packing of $n$ unit-diameter balls can have where the center point of each ball in the packing lies on $\Lambda$. 
Of course one has that $c_\Lambda(n)\leq c(n)$ for all $n$ and any lattice $\Lambda$.
K. Bezdek showed in \cite{Bez} that $c_\Lambda(n)\leq c_{A_{3}}(n)$ for all lattices $\Lambda$ and all $n$.
So, the face-centered cubic lattice is the extremal lattice to consider when trying to maximize the contact number when each ball is centered on a lattice. 
To simplify notation, let $c_A(n):= c_{A_3}(n)$ for each $n$.
Additionally, he showed the following upper bound for all $n\geq 2$:
\begin{align*}
 c_{A}(n)\leq 6n- \frac{3\sqrt[3]{18\uppi}}{\uppi}n^\frac{2}{3}=6n-3.665\dots n^\frac{2}{3}.
\end{align*}

While some intuition on why $c_{A}(n)$ is a good candidate for $c(n)$ has been stated before in \cite{BezKhan2018}, to our knowledge no one has explicitly put forward a conjecture. 

\begin{conjecture}\label{Conj: FCC is the best}
    For all $n$ sufficiently large, $c_{A}(n)=c(n)$.
\end{conjecture}

The fourth result of this paper is an improved upper bound on $c_{A}(n)$ for all $n$. 
To achieve this bound we utilize the geometric Brascamp-Lieb inequality.

\begin{theorem}\label{The: FCC Upper Bound}
    $c_{A}(n)\leq 6n-\frac{6}{\sqrt[6]{2}}n^\frac{2}{3}=6n-5.345\dots n^\frac{2}{3}$ for all $n\in \mathbb{N}$.
\end{theorem}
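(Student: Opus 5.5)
The plan is to bound the number of contacts from above by $6n$ minus a count of lattice lines, and then bound that count from below with the geometric Brascamp--Lieb inequality. Let $S\subseteq A_3$ be the set of $n$ centers. The twelve minimal vectors of $A_3$ come in six antipodal pairs $\pm v_1,\dots,\pm v_6$, and every contact is a pair $\{x,x+v_i\}$ for exactly one $i$.

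\textbf{Step 1 (contacts versus lines).} Fix $i$ and partition $S$ according to the lines parallel to $v_i$ through its points. On each such line, the points of $S$ form some number of maximal runs of consecutive lattice points. A run of $m$ points contributes exactly $m-1$ contacts in direction $v_i$. Hence the number of contacts in direction $v_i$ is at most $n-L_i$, where $L_i$ is the number of lines parallel to $v_i$ that meet $S$. Equivalently, $L_i=|\pi_i(S)|$ for the orthogonal projection $\pi_i$ onto $v_i^{\perp}$. Summing over $i$ gives
\[
c(S)\le 6n-\sum_{i=1}^6 L_i .
\]
By AM--GM, $\sum_i L_i\ge 6\bigl(\prod_i L_i\bigr)^{1/6}$. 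So it suffices to prove
\[
\prod_{i=1}^6 L_i\ge \tfrac12 n^4,
\]
since then $\sum_i L_i\ge 6\cdot 2^{-1/6}n^{2/3}$.

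\textbf{Step 2 (geometric Brascamp--Lieb setup).} Let $P_i=I-v_iv_i^{T}$ be the rank-two orthogonal projection onto $v_i^\perp$. The six vectors $v_i$ are the cuboctahedral directions, so by symmetry $\sum_i v_iv_i^T=2I$. Therefore
\[
\sum_{i=1}^6 \tfrac14 P_i=\tfrac14(6I-2I)=I,
\]
which is exactly the geometric Brascamp--Lieb hypothesis with all exponents $c_i=\tfrac14$. The inequality then gives
\[
\int_{\mathbb{R}^3}\prod_{i=1}^6 f_i(P_ix)^{1/4}\,dx\le\prod_{i=1}^6\Bigl(\int_{v_i^\perp}f_i\Bigr)^{1/4}
\]
for nonnegative $f_i$.

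\textbf{Step 3 (discretization).} Choose a cell $C$ (first try: the Voronoi cell of $A_3$, a rhombic dodecahedron of volume $1/\sqrt2$), and set $U=S+C$. Take $f_i$ to be the indicator of $P_i(U)$. The left-hand side is then at least $\vol{U}=n/\sqrt2$. All cells centered on one line parallel to $v_i$ project to the same planar set $P_i(C)$, up to translation. Hence
\[
\int f_i\le L_i\cdot \area{P_i(C)}=:L_iA_i,
\]
and $A_i=A$ is independent of $i$ by symmetry. Raising to the fourth power gives
\[
\frac{n^4}{4}\le A^6\prod_i L_i,
\]
which is the required bound exactly when $A^6\le\tfrac12$, that is, $A\le 2^{-1/6}$.

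\textbf{Main obstacle.} The delicate step is the choice of $C$ so that its projection along each $v_i$ is small enough. The projected cells along different lines may overlap, which only helps us. However, the argument requires $\area{P_i(C)}\le 2^{-1/6}\approx 0.891$, and the covolume $1/\sqrt2$ of the planar line lattice is a lower bound for this area. I would first compute the projected area of the rhombic dodecahedron along a face-diagonal direction. If it exceeds $2^{-1/6}$, I would replace $C$ by another fundamental domain, or by a weighted family of functions $f_i$ not coming from one common cell. The only requirement is that $\prod_i f_i(P_ix)^{1/4}\ge 1$ on a set of volume $n/\sqrt2$ while $\int f_i\le 2^{-1/6}L_i$. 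Verifying this pointwise covering condition for a concrete choice is where I expect the real work to lie.
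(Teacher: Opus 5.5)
Your Steps 1 and 2 are the paper's argument: your $\sum_i L_i$ is $\tfrac12|\partial^*(S)|$, and the Brascamp--Lieb datum with all exponents $\tfrac14$ is the same. The proof is still incomplete. The step that fixes the constant, the choice of cell, is left open, and the first two options you name do not work.

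The rhombic dodecahedron has twelve faces of area $\sqrt2/4$ with normals $\pm v_j$, and $\sum_w|\langle w,v_i\rangle|=6$ over these twelve normals $w$. Hence $\area{P_i(C)}=\tfrac12\cdot 6\cdot\tfrac{\sqrt2}{4}=\tfrac{3\sqrt2}{4}\approx 1.06>2^{-1/6}\approx 0.89$, and it only yields $\sum_i L_i\ge 2^{13/6}n^{2/3}\approx 4.49\,n^{2/3}$.

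Another fundamental domain of volume $1/\sqrt2$ cannot help either. Take a convex $C$ of that volume whose six projections have a common area $A$, as you assume. Minkowski's first inequality, exactly as in the paper, gives $6A\ge 3\vol{C}^{2/3}\vol{Z}^{1/3}$, i.e.\ $A\ge 2^{-1/6}$. Equality holds only for a homothet of the truncated octahedron $Z=\sum_{i=1}^6[0,v_i]$. But $Z$ does not tile space by $A_3$-translates; its $A_3$-translates pack with density at most $16/27$.

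The missing observation is that you never need $\vol{U}=n/\sqrt2$. The chain $n\vol{C}=\vol{S+C}\le\prod_i\bigl(L_i\area{P_i(C)}\bigr)^{1/4}$ uses only that the translates $x+C$, $x\in S$, are pairwise disjoint. Both sides scale by $\lambda^3$ under $C\mapsto\lambda C$. So you may fix any shape, shrink it until the translates are disjoint, and only the ratio $\vol{C}^4/\prod_i\area{P_i(C)}$ matters.

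Take $C=\epsilon Z$. Here $\vol{Z}=8\sqrt2$ and each $P_i(Z)$ is a planar zonotope of area $4\sqrt2$, so the ratio is $2^{14}/2^{15}=\tfrac12$. That is precisely your required $\prod_i L_i\ge \tfrac12 n^4$.

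This is what the paper does. It places a small truncated octahedron at each point of $S$, and uses Minkowski's inequality only to explain why this shape is the best choice.
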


The fifth result is the determination of the asymptotics of $c_{A}(n)$, which, if Conjecture \ref{Conj: FCC is the best} is true, would determine the asymptotics of $c(n)$. 
We obtain this by transforming the contact number problem on the face-centered cubic lattice into an edge isoperimetric problem on a particular Cayley graph.
We then apply a discrete isoperimetric result of Barber and Erde found in \cite{BarErd2018}.

\begin{theorem}\label{The: FCC asymptotics}
    $c_{A}(n)=6n-(1+o(1))6\sqrt[3]{2}n^\frac{2}{3} =6n-(1+o(1))7.559\dots n^\frac{2}{3}$.
\end{theorem}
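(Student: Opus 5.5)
Both bounds will come from translating the problem into an edge-isoperimetric problem on a Cayley graph. The contact graph of any packing with centers on $A_3$ is an induced subgraph of the Cayley graph $\Gamma$ of $\mathbb{Z}^3$ whose generators are the twelve minimal vectors $\pm e_1,\pm e_2,\pm e_3,\pm(e_1-e_2),\pm(e_1-e_3),\pm(e_2-e_3)$; this is the lattice $A_3$ in the basis $n_1,n_2,n_3$ after a unimodular change of coordinates. The graph $\Gamma$ is $12$-regular. For a finite set $S$ with $|S|=n$, double counting gives
\[
2e(S)=12n-|\partial S|,
\]
where $\partial S$ is the edge boundary. Hence $c_A(n)=6n-\tfrac12\min_{|S|=n}|\partial S|$, and Theorem \ref{The: FCC asymptotics} is equivalent to
\[
\min_{|S|=n}|\partial S|=(1+o(1))\,12\sqrt[3]{2}\,n^{2/3}.
\]

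\textbf{Lower bound on the boundary.} Here I would invoke the result of Barber and Erde \cite{BarErd2018}. It says that for a Cayley graph of $\mathbb{Z}^d$ with a finite symmetric generating set $U$, the optimal sets are asymptotically dilates of a certain convex body $K$, and
\[
\min_{|S|=n}|\partial S|=(1+o(1))\,c_U\, n^{(d-1)/d}.
\]
The constant $c_U$ is computed from the zonotope $Z=\sum_{u\in U}[0,u]$: the body $K$ is (a multiple of) the polar of $Z$, and $c_U$ is essentially $d\,\mathrm{vol}(Z)^{1/d}$ up to normalisation. I will state their theorem in exactly the normalisation they use and then compute. For our twelve vectors, $Z$ is a rhombic dodecahedron; in the original $A_3$ geometry its polar is a cuboctahedron. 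Translating back, the asymptotically optimal clusters are cuboctahedral (truncated-octahedron-like) pieces of the fcc lattice, which is plausible. The main computation is the volume of $Z$ in lattice coordinates. The sum of the segments $[0,u]$ over one vector from each $\pm$ pair is a zonotope whose volume equals the sum of $|\det|$ over all triples of the six vectors. There are $20$ triples, and I expect $16$ of them to be nonsingular, each with determinant $\pm1$, so $\mathrm{vol}(Z)=16$. With $d=3$ the constant should then come out to $12\sqrt[3]{2}$ after the appropriate factor. This normalisation check, matching the precise statement in \cite{BarErd2018} to produce exactly $6\sqrt[3]{2}$ for $\tfrac12|\partial S|$, is the step I expect to be most delicate, and I would verify it against the octahedral construction: those clusters give $6n-\sqrt[3]{486}\,n^{2/3}$, and $\sqrt[3]{486}\approx7.86>7.559$, so the computed constant must be smaller than that.

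\textbf{Upper bound on the boundary.} For the matching direction I would not rely solely on the existence part of \cite{BarErd2018}. Instead I would take $S_t=tK\cap\mathbb{Z}^3$ for the optimal body $K$, such as the lattice points of a large cuboctahedron-shaped region in $A_3$. For each generator $u$, the number of boundary edges in direction $u$ is, to leading order, the width-weighted area of the projection of $tK$ along $u$, with $O(t)$ error terms. Summing over the twelve directions gives $|\partial S_t|=t^2\cdot(\text{constant})+O(t)$, and the same computation identifies the constant with $12\sqrt[3]{2}\,\mathrm{vol}(K)^{2/3}$. For $n$ between consecutive values $|S_t|$, I would add points layer by layer, which costs $O(t)=o(n^{2/3})$ extra boundary, giving the result for all $n$. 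Combining both directions yields $c_A(n)=6n-(1+o(1))6\sqrt[3]{2}\,n^{2/3}$.
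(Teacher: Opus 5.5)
Your reduction ($12$-regular Cayley graph, $2e=12n-|\partial S|$) and your use of Barber--Erde through the volume of the zonotope follow the paper's route. Your volume count is also right: the $16$ nonsingular triples are the spanning trees of $K_4$, each with determinant $\pm1$. The ``appropriate factor'' you were unsure about is as follows. In the theorem, $Z$ is summed over all twelve generators, so $Z=\sum[-u,u]$ (one $u$ per antipodal pair) is a translate of twice your half-zonotope. Hence $\mathrm{vol}(Z)=8\cdot16=128$ and $3\cdot 128^{1/3}=12\sqrt[3]{2}$. Plugging in $16$ directly would be off by a factor of $2$. This agrees with the paper's computation: $64\sqrt2$ in $A_3$ coordinates, divided by $\det T=1/\sqrt2$.

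The genuine problem is your geometric identification of the optimal body, and it breaks the constructive half as you wrote it. First, $Z$ is not a rhombic dodecahedron. Six generators with four coplanar triples give $8$ hexagonal and $6$ parallelogram faces, and in $A_3$ coordinates $Z$ is the truncated octahedron (the permutohedron of the $A_3$ roots). Second, the optimal sets are dilates of $Z$ itself, not of its polar. The boundary of $tK\cap A_3$ is asymptotically $2\sqrt2\,t^2\sum_{i=1}^6\mathrm{area}(P_{E_i}K)$, which up to a constant is the mixed volume $V(K,K,Z)$. Minkowski's first inequality gives equality only for $K$ homothetic to $Z$; this is exactly the argument in the paper's Brascamp--Lieb section. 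Concretely, take the cuboctahedron $K=\mathrm{conv}\{(\pm1,\pm1,0),\dots\}$. Then $\mathrm{vol}(K)=20/3$ and each of the six projections has area $3\sqrt2$. So $|\partial S_t|\approx72t^2$ while $n\approx\tfrac{20\sqrt2}{3}t^3$, i.e.\ $|\partial S_t|\approx16.13\,n^{2/3}$. That yields only $c_A(n)\gtrsim 6n-8.07\,n^{2/3}$, which is worse even than the octahedral constant $\sqrt[3]{486}\approx7.86$; your own sanity check would reject it. Your claim that the computation produces $12\sqrt[3]{2}\,\mathrm{vol}(K)^{2/3}t^2$ is therefore false for the body you named. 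To fix this, either use truncated-octahedral clusters (the paper's lemma on truncated octahedra gives exactly $6n-6(8k^2-11k+4)$ for them, and your interpolation handles intermediate $n$), or simply use the two-sided statement of Barber--Erde, which already supplies the matching construction. The paper takes the latter route.
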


The sixth result is an improved lower bound on $c_{A}(n)$ for infinitely many values of $n$. 
The results in \cite{BarErd2018} and \cite{BarErdeKevRob2023}, when applied to this problem, state that asymptotically the approximate shape of the center points in an extremal packing of balls centered on $A_3$ will be the intersection of the $A_3$ lattice with an appropriately scaled truncated octahedron (an Archimedean solid depicted in Figure \ref{Fig: TruOct}).
This lower bound comes from calculating explicitly the contact number such a construction has when the truncated octahedron with an edge length of $k-1$ with exactly $k$ lattice points on each edge is intersected with the face-centered cubic lattice.

\begin{figure}
    \centering
    \begin{subfigure}[t]{0.45\textwidth}
\tdplotsetmaincoords{70}{135}

\begin{tikzpicture}[tdplot_main_coords, scale=1.2, line join=round]

    \coordinate (V1)  at (0, 1, 2);   \coordinate (V2)  at (0, -1, 2);
    \coordinate (V3)  at (1, 0, 2);   \coordinate (V4)  at (-1, 0, 2);
    \coordinate (V5)  at (0, 2, 1);   \coordinate (V6)  at (0, -2, 1);
    \coordinate (V7)  at (2, 0, 1);   \coordinate (V8)  at (-2, 0, 1);
    \coordinate (V9)  at (1, 2, 0);   \coordinate (V10) at (-1, 2, 0);
    \coordinate (V11) at (1, -2, 0);  \coordinate (V12) at (-1, -2, 0);
    \coordinate (V13) at (2, 1, 0);   \coordinate (V14) at (-2, 1, 0);
    \coordinate (V15) at (2, -1, 0);  \coordinate (V16) at (-2, -1, 0);
    \coordinate (V17) at (0, 2, -1);  \coordinate (V18) at (0, -2, -1);
    \coordinate (V19) at (2, 0, -1);  \coordinate (V20) at (-2, 0, -1);
    \coordinate (V21) at (0, 1, -2);  \coordinate (V22) at (0, -1, -2);
    \coordinate (V23) at (1, 0, -2);  \coordinate (V24) at (-1, 0, -2);

    \begin{scope}[dashed, very thick, fill opacity=0.3]
        \filldraw[fill=yellow!40] (V21)--(V23)--(V22)--(V24)--cycle; 
        \filldraw[fill=yellow!40] (V6)--(V11)--(V18)--(V12)--cycle;
        \filldraw[fill=yellow!40] (V8)--(V14)--(V20)--(V16)--cycle;
        \filldraw[fill=blue!30] (V22)--(V23)--(V19)--(V15)--(V11)--(V18)--cycle;
        \filldraw[fill=blue!30] (V22)--(V18)--(V12)--(V16)--(V20)--(V24)--cycle;
        \filldraw[fill=blue!30] (V21)--(V17)--(V10)--(V14)--(V20)--(V24)--cycle;
        \filldraw[fill=blue!30] (V2)--(V4)--(V8)--(V16)--(V12)--(V6)--cycle;
    \end{scope}

    \begin{scope}[very thick, fill opacity=0.7]
        \filldraw[fill=yellow!60] (V1)--(V3)--(V2)--(V4)--cycle;     
        \filldraw[fill=yellow!60] (V5)--(V9)--(V17)--(V10)--cycle;   
        \filldraw[fill=yellow!60] (V7)--(V13)--(V19)--(V15)--cycle;  
        \filldraw[fill=blue!50] (V1)--(V3)--(V7)--(V13)--(V9)--(V5)--cycle;
        \filldraw[fill=blue!50] (V1)--(V5)--(V10)--(V14)--(V8)--(V4)--cycle;
        \filldraw[fill=blue!50] (V2)--(V3)--(V7)--(V15)--(V11)--(V6)--cycle;
        \filldraw[fill=blue!50] (V21)--(V23)--(V19)--(V13)--(V9)--(V17)--cycle;
    \end{scope}

\end{tikzpicture}
    \end{subfigure}
    \hfill
    \begin{subfigure}[t]{0.45\textwidth}
\tdplotsetmaincoords{90}{135}

\begin{tikzpicture}[tdplot_main_coords, scale=1.2, line join=round]

    \coordinate (V1)  at (0, 1, 2);   \coordinate (V2)  at (0, -1, 2);
    \coordinate (V3)  at (1, 0, 2);   \coordinate (V4)  at (-1, 0, 2);
    \coordinate (V5)  at (0, 2, 1);   \coordinate (V6)  at (0, -2, 1);
    \coordinate (V7)  at (2, 0, 1);   \coordinate (V8)  at (-2, 0, 1);
    \coordinate (V9)  at (1, 2, 0);   \coordinate (V10) at (-1, 2, 0);
    \coordinate (V11) at (1, -2, 0);  \coordinate (V12) at (-1, -2, 0);
    \coordinate (V13) at (2, 1, 0);   \coordinate (V14) at (-2, 1, 0);
    \coordinate (V15) at (2, -1, 0);  \coordinate (V16) at (-2, -1, 0);
    \coordinate (V17) at (0, 2, -1);  \coordinate (V18) at (0, -2, -1);
    \coordinate (V19) at (2, 0, -1);  \coordinate (V20) at (-2, 0, -1);
    \coordinate (V21) at (0, 1, -2);  \coordinate (V22) at (0, -1, -2);
    \coordinate (V23) at (1, 0, -2);  \coordinate (V24) at (-1, 0, -2);

    \begin{scope}[dashed, very thick, fill opacity=0.3]
        \filldraw[fill=yellow!40] (V21)--(V23)--(V22)--(V24)--cycle; 
        \filldraw[fill=yellow!40] (V6)--(V11)--(V18)--(V12)--cycle;
        \filldraw[fill=yellow!40] (V8)--(V14)--(V20)--(V16)--cycle;
        \filldraw[fill=blue!30] (V22)--(V23)--(V19)--(V15)--(V11)--(V18)--cycle;
        \filldraw[fill=blue!30] (V22)--(V18)--(V12)--(V16)--(V20)--(V24)--cycle;
        \filldraw[fill=blue!30] (V21)--(V17)--(V10)--(V14)--(V20)--(V24)--cycle;
        \filldraw[fill=blue!30] (V2)--(V4)--(V8)--(V16)--(V12)--(V6)--cycle;
    \end{scope}

    \begin{scope}[very thick, fill opacity=0.7]
        \filldraw[fill=yellow!60] (V1)--(V3)--(V2)--(V4)--cycle;     
        \filldraw[fill=yellow!60] (V5)--(V9)--(V17)--(V10)--cycle;   
        \filldraw[fill=yellow!60] (V7)--(V13)--(V19)--(V15)--cycle;  
        \filldraw[fill=blue!50] (V1)--(V3)--(V7)--(V13)--(V9)--(V5)--cycle;
        \filldraw[fill=blue!50] (V1)--(V5)--(V10)--(V14)--(V8)--(V4)--cycle;
        \filldraw[fill=blue!50] (V2)--(V3)--(V7)--(V15)--(V11)--(V6)--cycle;
        \filldraw[fill=blue!50] (V21)--(V23)--(V19)--(V13)--(V9)--(V17)--cycle;
    \end{scope}

\end{tikzpicture}
    \end{subfigure}
    \caption{The truncated octahedron and one of its projections}
    \label{Fig: TruOct}
\end{figure}

\begin{theorem}\label{The: Lower Bound}
    If $n= 16k^3-33k^2+24k-6$ for some $k\in \mathbb{N}$, then
    \[
    6n-6\sqrt[3]{2}n^\frac{2}{3}< 6n-6(8k^2-11k+4)\leq c_{A}(n)\leq c(n).
    \]
    
\end{theorem}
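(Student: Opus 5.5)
The plan is to realize the middle quantity explicitly with a truncated-octahedral cluster of the face-centered cubic lattice. The right-hand inequalities $c_A(n)\le c(n)$ are immediate from the definitions, so the work is to show the middle bound and then the left-hand strict inequality. I will work in the unscaled model $A_3\cong\frac{1}{\sqrt2}D_3$, where $D_3=\{x\in\mathbb{Z}^3: x_1+x_2+x_3\equiv 0 \pmod 2\}$. The $12$ contact directions then correspond to the vectors $\pm e_i\pm e_j$.

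\textbf{Step 1: the construction and the count $n$.} For each $k$, I take $P_k$ to be the truncated octahedron cut out by inequalities $|x_i-c_i|\le a_k$ and $\sum_i|x_i-c_i|\le b_k$. I will choose the centre $c$ and the parameters $a_k,b_k$ so that every edge of $P_k$ has length $k-1$ in lattice units and carries exactly $k$ points of $D_3$. The centre $c$ is probably not a lattice point, since the permutations of $(0,\pm1,\pm2)$ have odd coordinate sum and a shift is needed. This is pinned down by the checks $k=1$, giving $n=1$, and $k=2$, giving $n=38$. The square faces lie in $(001)$-planes, which carry square sublattices, and the hexagonal faces lie in $(111)$-planes, which carry triangular sublattices. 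I then count $n=|P_k\cap D_3|$ by slicing into layers $x_3=\text{const}$. Each layer is an octagon (a square with its corners cut) intersected with a checkerboard lattice. Summing the resulting quadratic in the layer index should give $16k^3-33k^2+24k-6$.

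\textbf{Step 2: the contact number.} If $E$ is the number of contacts, then
\[
2E=12n-\sum_{p\in P_k\cap D_3}\#\{\text{neighbours of }p\text{ outside }P_k\},
\]
so I need to show that the boundary deficit equals $12(8k^2-11k+4)$. I will partition the boundary points into three classes.
\begin{itemize}
\item Points in the relative interior of the $6$ square faces: each misses $4$ neighbours.
\item Points in the relative interior of the $8$ hexagonal faces: each misses $3$ neighbours.
\item Points on the $36$ edges and at the $24$ vertices: the deficits here are determined by the local dihedral geometry. I will compute them separately for square–hexagon edges and hexagon–hexagon edges.
\end{itemize}
Counting lattice points in each face interior (a square grid with $(k-2)^2$ interior points, and a triangular-lattice hexagon) and along each edge, then summing, should give $12(8k^2-11k+4)$. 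This yields $E=6n-6(8k^2-11k+4)$ and hence the middle inequality. This bookkeeping is the main obstacle. I would first verify it for $k=2,3$ by direct enumeration to fix the edge and vertex deficits, and only then do the general sum.

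\textbf{Step 3: the strict inequality.} Set $m=8k^2-11k+4>0$. The claim $6m<6\sqrt[3]{2}\,n^{2/3}$ is equivalent to $m^3<2n^2$. Expanding both sides, the $k^6$ coefficients ($512$) and the $k^5$ coefficients ($-2112$) cancel. The $k^4$ coefficient of $2n^2-m^3$ is $3714-3672=42>0$. So $2n^2-m^3$ is a quartic with positive leading coefficient. I will bound its lower-order terms for $k\ge k_0$ and check the remaining small $k$ directly; for example, $k=1$ gives $2-1>0$. This completes the chain of inequalities.
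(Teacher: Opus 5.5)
Your proposal is correct, and its core is the paper's own argument for Lemma~\ref{Lem: LB}: the same truncated-octahedral cluster and the same identity $2E=12n-\sum_v|\partial(v)|$, with deficits $3$, $4$, $5$, $6$ for hexagonal-face interiors, square-face interiors, edge interiors and vertices. You do not need the $k=2,3$ enumeration to fix the last two values. Testing the twelve vectors $\pm e_i\pm e_j$ against the active constraints shows that points inside square--hexagon edges and inside hexagon--hexagon edges both have degree $7$, and that vertices have degree $6$. The total deficit is then
\[
24(3k^2-9k+7)+24(k-2)^2+180(k-2)+144=12(8k^2-11k+4).
\]
A small correction: the shift of the centre is needed only for even $k$. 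Relative to the centre, the vertices are permutations of $(0,\pm(k-1),\pm2(k-1))$, and their coordinate sum has the parity of $k-1$.

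You differ from the paper in the two computations around this core.

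\textbf{Counting $n$.} The paper avoids slicing. It takes the octahedron with $a=3k-2$ lattice points per edge, which contains $\frac{2a^3+a}{3}$ points, and removes six square pyramids with $k-1$ points per edge. Your layer count also works. With $t=k-1$, the layer at height $h$ contains $(3t-|h|+1)^2-2u(u+1)$ points, where $u=\max(t-|h|,0)$, and summing over $|h|\le 2t$ gives $16t^3+15t^2+6t+1=16k^3-33k^2+24k-6$. It is simply longer than the paper's decomposition.

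\textbf{The strict inequality.} The paper solves the cubic $n=16k^3-33k^2+24k-6$ for $k$ explicitly and then estimates the resulting radical expression. Your reduction to $m^3<2n^2$ is more elementary, and it closes with no threshold $k_0$ and no small-case check:
\[
2n^2-m^3=42k^4-109k^3+108k^2-48k+8=(2k-1)(3k-2)(7k^2-10k+4).
\]
This is positive for every $k\ge1$, since the last factor has negative discriminant.
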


We end this section with a conjecture that states the lower bound in Theorem \ref{The: Lower Bound} is in fact an upper bound.

\begin{conjecture}
    For $n$ sufficiently large, if $n=16k^3-33k^2+24k-6$ for some $k\in \mathbb{N}$, then \[c(n)=6n-6(8k^2-11k+4).\]
    
\end{conjecture}

\section{Extremal contact graphs are minimally rigid}

In this section we prove Theorem \ref{The: Rigid}. 
It suffices to show that $c(n+1)\geq c(n)+3$ for all $n\geq 3$. 
To see why, suppose this holds. Then since $c(4)=6$ we immediately get $c(n)\geq 3n-6$ for all $n\geq 4$. Additionally, let $n\geq 3$ and consider an extremal packing of $n+1$ unit-diameter balls where one ball is in contact with at most two others.
Removing this ball creates a packing of $n$ unit-diameter balls with a contact number of at least $c(n+1)-2$, which shows $c(n+1)-2\leq c(n)$, a contradiction.

We prove $c(n+1)\geq c(n)+3$ for all $n\geq 3$ by induction on $n$. The base case holds as $c(4)=c(3)+3$. For the inductive step, suppose $c(n)\geq c(n-1) +3$ for some $n\geq 4$.
Consider an extremal packing of $n$ unit-diameter balls. Since $c(n)\geq c(n-1)+3$ each ball in the packing comes into contact with at least three others. 
Add to this packing an additional ball that is not in contact with any other ball.
Move this ball towards an arbitrary ball in the packing until it comes into contact with at least one other ball.
For each point $v\in \mathbb{R}^3$ let $B(v,r)$ denote the ball of radius $r$ centered at $v$, and let $B(v):=B(v,\frac{1}{2})$ denote the unit-diameter ball centered at $v$. 
Let $x$ be the center point of the ball we added to the packing, and let $y$ be the center point of the first ball it comes in contact with. 
If $B(x)$ is in contact with at least three balls we are done, so suppose $B(x)$ is in contact with at most two balls. Note that $x$ is at a distance at least one from every center point of a ball in our packing.

Without loss of generality we may assume $y$ is the origin. Then the center point of each ball that comes into contact with $B(y)$ is on $\mathbb{S}^2:=\bd{B(y,1)}$. 
Since we added a ball onto an extremal packing of $n$ balls, $B(y)$ was in contact with at least three other balls before $B(x)$ was added. 
This implies there are center points of balls in the packing, $z_1,z_2,\dots ,z_\ell$ where $\ell\geq 3$, which are distinct from $x$ and lie on $\mathbb{S}^2$.
For each $u\in \mathbb{S}^2$ and $r\in [0, 180]$, let $S(u,r^\circ)$ denote the spherical cap centered at $u$ with angular radius $r$.
Then for any $s\in \mathbb{S}^2$ and $i\in \{1,2,\dots, \ell\}$, $s$ is at a distance greater than one from $z_i$ if and only if $s\notin S(z_i,60^\circ)$, and $s$ is at a unit distance from $z_i$ if and only if $s\in \bd{S(z_i,60^\circ)}$.

Let $s_1,s_2,\dots, s_k$ where $k\geq0$ denote the center points of balls in the packing such that $1<\|s_i\|\leq 2$ and let $u_i=\frac{s_i}{\|s_i\|}$. For each $i$ let $S(u_i,r_i)$ be the spherical cap which is the intersection of $\mathbb{S}^2$ and $B(s_i,1)$, note that $r_i<60^\circ$. 
A point $s\in \mathbb{S}^2$ is a distance greater than one from $s_i$ if and only if $s\notin S(u_i,r_i)$, and $s$ is at a unit distance from $s_i$ if and only if $s\in \bd{S(u_i,r_i)}$. 

Given the spherical caps
\[S(z_1,60^\circ),S(z_2,60^\circ),\dots, S(z_\ell,60^\circ),S(u_1,r_1),S(u_2,r_2),\dots, S(u_k,r_k)\]
our goal is to reposition $x$ on $\mathbb{S}^2$ such that $x$ is on the boundary of at least two spherical caps and not in the interior of any spherical cap.
If we are able to reposition $x$ in such a way, by the previous two paragraphs, this would imply that $x$ is a unit distance from at least three center points of balls in the packing, and a distance greater than one from every other center point of a ball in the packing. 
The latter follows from $x$ not being in the interior of any of the spherical caps and the fact that since $x\in \mathbb{S}^2$, every point at a distance greater than $2$ from $y$ is also at a distance greater than $1$ from $x$.
This would imply the ball $B(x)$ is in contact with at least three others and has an interior disjoint from every other ball in the packing.
In other words, adding $B(x)$ to the packing creates a packing of $n+1$ unit-diameter balls that has a contact number at least $c(n)+3$ and hence $c(n+1)\geq c(n)+3$ completing the inductive step.

We claim that two of the spherical caps $S(z_1,60^\circ),S(z_2,60^\circ),S(z_3,60^\circ)$ must intersect.
This follows from the fact that for any three points on a sphere, some pair of these points must be at an angular distance at most $120^\circ$. 
This can be easily seen from the following. Let $z_1,z_2,z_3 \in \mathbb{S}^2$, then
\begin{align*}
0&\leq \|z_1+z_2+z_3\|^2=\langle z_1+z_2+z_3,z_1+z_2+z_3\rangle\\
&= 3+2\langle z_1,z_2\rangle +2\langle z_2,z_3\rangle+2\langle z_1,z_3\rangle.
\end{align*} 

\noindent This implies that $\max_{i\neq j} \langle z_i,z_j \rangle \geq -\frac{1}{2}$ and so the angle between the pair of points achieving this maximum is at most $120^\circ$.

Without loss of generality suppose $S(z_1,60^\circ)$ and $S(z_2,60^\circ)$ intersect. Since $x$ is on the boundary of at most one spherical cap and not in the interior of any spherical cap, it follows that the union, $\bigcup_{i=1}^\ell S(z_i,60^\circ) \cup \bigcup_{i=1}^k S(u_i,r_i)$, does not cover the sphere. This union will be a collection of connected components on $\mathbb{S}^2$, let $C$ be the connected component that contains $S(z_1,60^\circ)$ and $S(z_2,60^\circ)$. To simplify notation, we relabel the caps contained in $C$ so that $C=\cup_{i=1}^j S_i$ where $S_1$ and $S_2$ denote $S(z_1,60^\circ)$ and $S(z_2,60^\circ)$ respectively and $S_i$ for each $i\geq 3$ denote some other spherical cap in our collection. Since the union of all spherical caps does not cover $\mathbb{S}^2$, $C$ must have a boundary point $b$. The point $b$ must be on the boundary of at least one spherical cap and not in the interior of any cap. In the case $b$ is a boundary point of at least two spherical caps we are done by setting $x=b$. Otherwise $b$ is in the boundary of exactly one spherical cap, say $S_m$ for some $m=1,\dots, j$, note we may assume the angular radius of $S_m$ is positive as $b$ is in the boundary of exactly one spherical cap and $C$ is connected and not a singleton. We claim that the boundary of some cap in the list $S_1,S_2,\dots, S_{m-1},S_{m+1},\dots, S_j$ must intersect the boundary of $S_m$. To see this, suppose it is false, then for each cap $S_i$ where $i\neq m$ it must be the case that $\bd{S_m}$ is contained in the interior of $S_i$ or in the exterior of $S_i$. In the case $\bd{S_m}$ is contained in the interior of some $S_i$, this contradicts $b$ being a boundary point of $C$. In the case $\bd{S_m}$ is in the exterior of every $S_i$ then $C=S_m$ which contradicts that the angular radius of $S_m$ is at most $60^\circ$. This is due to $C$ containing $S_1$ and $S_2$ which are both spherical caps with angular radius $60^\circ$, so if a spherical cap contains both $S_1$ and $S_2$ it must have an angular radius greater than $60^\circ$. 

By the previous paragraph the set $\bigcup_{i\neq m} (\bd{S_m}\cap \bd{S_i})$ is non-empty. Since each cap has an angular radius at most $60^\circ$, $|\bd{S_m}\cap \bd{S_i}|\leq 2$ for each $i$. So $\bigcup_{i\neq m} (\bd{S_m}\cap \bd{S_i}) $ is a finite set of points on $\bd{S_m}$ not containing $b$. Choose the point $b'$ in this set that has smallest angular distance, measured from the center of $S_m$, from $b$. We claim that $b'$ is not in the interior of any spherical cap, if it were then since $b$ is in the exterior of every cap beside $S_m$, there must be an intersection of the boundary of this cap with both circular arcs connecting $b'$ and $b$ contradicting the choice of $b'$. Setting $x=b'$ completes the proof as $b'$ is on the boundary of at least two spherical caps and not in the interior of any.

\section{The contact number of small sphere packings}

In this section we prove Theorems \ref{The: Small Packings} and \ref{The: Enumeration}.
Let $G$ be the contact graph of a packing of $n$ unit-diameter balls in $\mathbb{R}^3$. 
We start by listing some known facts about the structure of $G$.

\begin{lemma}[Folklore and \cite{EmpAMB2011}]\label{Lem: Rules 1-4}
    $K_5$ and $K_{3,3}$ are forbidden subgraphs of $G$. Additionally, a pair of vertices has at most five common neighbors, and the induced subgraph on the common neighborhood of an adjacent pair of vertices is a subgraph of $P_5$.
\end{lemma}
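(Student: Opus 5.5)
We argue directly from the embedding: $G$ is a unit distance graph in $\mathbb{R}^3$ whose vertices are pairwise at distance at least $1$. We treat the four claims in turn.

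\emph{$K_5$ is forbidden.} Five points at pairwise distance exactly $1$ would form a regular $4$-simplex, which does not embed in $\mathbb{R}^3$. Concretely: any four of them form a regular tetrahedron. A fifth point equidistant (distance $1$) from all four must be the circumcenter of that tetrahedron, which is at distance $\sqrt{3/8}\neq 1$ from its vertices. This is a contradiction.

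\emph{$K_{3,3}$ is forbidden.} Let $a_1,a_2,a_3$ and $b_1,b_2,b_3$ be the two sides, with every $a_i$ at unit distance from every $b_j$. If the $a_i$ are collinear, then being pairwise at distance at least $1$ they are distinct, and no point is equidistant from three distinct collinear points, a contradiction. So the $a_i$ span a plane. Points at unit distance from all three $a_i$ lie on the line through their circumcenter perpendicular to that plane, and at most two points of this line are at the required distance. Hence three distinct $b_j$ cannot exist.

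\emph{At most five common neighbours.} Let $u,v$ be two vertices. Since $\|u-v\|\geq 1$, the common neighbours lie on the circle $\bd{B(u,1)}\cap\bd{B(v,1)}$ of radius $\rho=\sqrt{1-\|u-v\|^2/4}\leq \sqrt3/2$; if $\|u-v\|>2$ there are none. Distinct common neighbours are at distance at least $1$, so consecutive points on the circle subtend a central angle at least $2\arcsin(1/(2\rho))\geq 2\arcsin(1/\sqrt3)\approx 70.5^\circ$. Since $6\cdot 70.5^\circ>360^\circ$, at most five fit.

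\emph{Common neighbourhood of an adjacent pair.} Now $\|u-v\|=1$, so $\rho=\sqrt3/2$ exactly. Two points $p,q$ on this circle are at distance $2\rho\sin(\theta/2)$, where $\theta$ is the central angle between them. Such a pair is adjacent iff $\theta=\theta_0:=2\arcsin(1/\sqrt3)\approx70.53^\circ$, and the packing condition forces $\theta\geq\theta_0$. So, ordering the common neighbours cyclically around the circle, an edge among them can only join cyclically consecutive ones. The induced graph is therefore a subgraph of a cycle $C_m$ with $m\leq5$. It cannot be the full cycle, because that would require $m\theta_0=360^\circ$, and $360/70.53\approx5.1$ is not an integer. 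Thus the induced graph is a disjoint union of paths on at most $5$ vertices in total, i.e.\ a subgraph of $P_5$.

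The main obstacle is this last step. One must check that ``consecutive on the circle'' is the only way adjacency can arise, which follows from the angular monotonicity of chord length for $\theta\in[0,180^\circ]$. One must also verify $6\theta_0>360^\circ$ and $5\theta_0\neq360^\circ$ numerically, since $\cos\theta_0=1/3$ gives $\theta_0\approx70.53^\circ$.
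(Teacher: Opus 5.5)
Your proof is correct and follows essentially the same route as the paper: the bound on equidistant sets for $K_5$, the fact that three unit spheres with distinct centres meet in at most two points for $K_{3,3}$, and, for the last two claims, placing common neighbours on a circle of radius $\rho\le\sqrt3/2$ whose points must be at least $\theta_0=\arccos(1/3)$ apart. Your observation that $m\theta_0\neq 360^\circ$ for every $m\le 5$ rules out full cycles $C_3$, $C_4$ and $C_5$ together, which is slightly more explicit than the paper, whose remark only addresses the regular-pentagon case.
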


That $K_5$ is a forbidden subgraph follows from the classical fact that an equidistant set of points in $\mathbb{R}^3$ has cardinality at most $4$. $K_{3,3}$ is a forbidden subgraph because the intersection of the boundaries of three unit-radius balls in $\mathbb{R}^3$ has at most two points. The last two parts of the lemma were shown in \cite{EmpAMB2011} as follows. The common neighborhood of a pair of vertices, $x$ and $y$, must lie on a circle which is the boundary intersection of unit-radius balls centered at $x$ and $y$. The radius of the circle is at its maximum when $x$ and $y$ are a unit distance apart. On this maximum radius circle only five vertices can be placed so that distance between any two is at least one. Additionally, the only time a pair of vertices on this circle is at a unit distance is when they are consecutive points on the circle. Lastly, they showed that when $x$ and $y$ are at a unit distance, if five points lie on the circle at a distance at least one from each other, then there must be a consecutive pair that is at a distance greater than one, or equivalently the circle's radius does not inscribe a unit-length regular pentagon.

The approach in \cite{EmpAMB2011} uses these rules and others to produce an enumeration of minimally rigid packings of unit-diameter balls that is likely to be complete.
The authors of \cite{EmpAMB2011} produced this by determining both the adjacency matrix and the distance matrix of $G$.
The distance matrix of $G$ has as its $(i,j)$ entry the Euclidean distance between the center points of the $i$th and $j$th ball.
Given a graph that one wishes to check is realizable or not as the contact graph of a packing of unit-diameter balls, determining the distance matrix (or at least part of it) can prove the graph is not realizable. 
For example, if $K_5$ minus an edge is a subgraph of $G$, then the distance between the pair of non-adjacent vertices is determined.
However, if some other rule determines this distance to be a different value, then the packing must not be realizable. 
Of course, if one uses floating-point arithmetic for these calculations, some packings may be ruled out erroneously. 

We utilize elements of the approach in \cite{EmpAMB2011} but do not calculate the distance matrix. Instead, we rely solely on the adjacency matrix of the contact graph.
Utilizing graph searches, for each $n=6,7,8,$ and $9$, we will create a list of $n$-vertex graphs that will include all $n$-vertex contact graphs with at least $3n-6$ edges. 
This list may include some graphs that are not realizable as the contact graph of a packing of unit-diameter balls.
However, since we only need an upper bound on the number of edges in such graphs this will not be a problem as long as all graphs we generate do not have more than $3n-6$ edges.
It turns out that the lists we generate match the potential enumerations found in \cite{EmpAMB2011} for $n=6,7,$ and $8$. 
This implies the list of contact graphs we generate enumerates all extremal contact graphs for $n=6,7,$ and $8$. 
In order to accomplish this we need to know more about the combinatorial structure of contact graphs so that we can rule out the graphs with more than $3n-6$ edges.
We do this by creating a list of forbidden subgraphs of $G$.

\begin{lemma}
    Each graph in Figure \ref{fig:forbidden} is a forbidden subgraph of a contact graph of unit-diameter balls.
\end{lemma}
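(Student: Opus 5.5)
We cannot see Figure \ref{fig:forbidden}, so the plan is generic: a uniform strategy that applies to each forbidden graph in turn. Each such graph $H$ is a small graph built from rigid pieces: tetrahedra ($K_4$), triangular bipyramids ($K_5$ minus an edge), and octahedra ($K_{2,2,2}$). The tools are Lemma \ref{Lem: Rules 1-4} together with exact, symbolic distance computations. For each $H$, we assume an embedding of $H$ as a minimum distance graph in $\mathbb{R}^3$. Every edge then has length exactly $1$ and every non-edge has length at least $1$. We aim for a contradiction.

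\emph{Step 1: reduce to exact coordinates.} We decompose $H$ along rigid subgraphs. A $K_4$ is a regular tetrahedron and is determined up to isometry. Gluing a vertex adjacent to three vertices of a triangle gives exactly two positions, reflections of each other through the triangle's plane. The reflected position coincides with an existing vertex or produces a forbidden coincidence, so each added vertex of degree $3$ into an already placed rigid frame is determined. Following a construction order of $H$ (a Henneberg-type order, adding vertices of degree $\ge 3$ into the frame), we list the finitely many candidate configurations with explicit algebraic coordinates. These are built from $\sqrt{2}$, $\sqrt{3}$, $\sqrt{6}$ and the like, as in the coordinates appearing in Figure \ref{Fig: All}.

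\emph{Step 2: exhibit a violated constraint.} In each candidate configuration we check two things with exact arithmetic, so that no floating-point issue arises. The first is a pair of non-adjacent vertices at distance $<1$, the typical example being $\sqrt{8/9}\cdot\ldots$ style distances strictly less than one. The second is a pair that must be adjacent but is at distance $\neq 1$. Either outcome contradicts the minimum distance property. For graphs containing a vertex pair with many common neighbours, we instead invoke Lemma \ref{Lem: Rules 1-4} directly. The common neighbours lie on one circle. Five of them force a non-consecutive pair, or a missing consecutive edge, so any $H$ demanding a $P_5$ plus an extra edge, or a $C_5$, inside a common neighbourhood is excluded at once. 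The same applies to any $H$ containing $K_5$ or $K_{3,3}$.

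\emph{Step 3: handle the flexible cases.} The main obstacle is graphs $H$ that are not generically rigid, where some vertex has degree $2$ into the frame, or where $H$ splits into two rigid bodies sharing an edge. Here the configuration is a one-parameter family; for example, a rotation angle $\theta$ about a shared edge. For these I would parametrize by $\theta$ and write each squared distance of a required edge as a trigonometric polynomial in $\theta$. Setting it equal to $1$ determines finitely many $\theta$. At those $\theta$ we verify, exactly, that some other non-edge distance is $<1$. The hardest part is the interval arithmetic or algebraic-number certification: the problem is showing that a squared distance like $1-\epsilon$ is genuinely below $1$ without rounding. I would handle this by clearing denominators and comparing rational expressions in the surds, or by a monotonicity argument in $\theta$ over the admissible range.
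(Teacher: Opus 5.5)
Your Steps 1--2 work for the graphs that are actually rigid. $H_6$, $H_7$, $H_8$ have exactly $3n-6$ edges and only finitely many placements. $H_9$ has $22>3\cdot 9-6$ edges, so the surplus edge $hi$ is an equation you can test at the unique configuration. For these, exact surd arithmetic would do the job as well as the paper's interval arithmetic.

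\textbf{The gap is in Step 3.} Count edges: $H_1$ has $11<12$, $H_2$ has $12<15$, $H_3$ has $13<15$, and $H_4,H_5$ have $17<18$. So after imposing \emph{every} edge equation you still have a continuum of configurations, of dimension $1,3,2,1,1$ respectively. Your sentence ``Setting it equal to $1$ determines finitely many $\theta$'' therefore fails:
\begin{itemize}
\item In $H_1$, $e$ and $f$ each run over a circle. The one extra equation $\|e-f\|=1$ cuts out a curve in the parameter torus, not finitely many points.
\item In $H_2$ there is no extra equation at all, since $g$ may lie anywhere on the circle of points at unit distance from $e$ and $f$.
\item In $H_3$, every $g$ on the unit sphere about $a$ yields $e,f$ (as reflections of $a$) satisfying all edge equations.
\item In $H_4,H_5$, every $s=\|a-g\|\in[1,\sqrt2]$ does. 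These two graphs also contain no $K_4$, so your tetrahedral starting frame does not exist.
\end{itemize}
For these five graphs the contradiction comes solely from the non-edge inequalities $\|v_i-v_j\|\geq 1$. You must show that at every point of a positive-dimensional parameter region some such inequality fails; for $H_1$, equivalently, that $\|e-f\|>1$ throughout the admissible region. That is a uniform strict inequality over a continuum. Exact evaluation at finitely many points cannot certify it, and your unspecified ``monotonicity in $\theta$'' fallback does not address two- and three-parameter families.

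This is what the paper's Lemmas~\ref{Lem: Forbidding 1 and 2}--\ref{Lem: Forbidding 4 and 5} supply. They use explicit parametrizations: $e(s),f(t)$ on circles; $g$ in spherical coordinates; $s\in[1,\sqrt2]$ with $b,\dots,e$ at angles $0,\delta,2\delta,3\delta$. They subdivide the parameter domain into cells, discard cells where a non-edge constraint is certainly violated, and certify the needed strict inequality on the remaining cells by interval arithmetic. For $H_2$, the third parameter is eliminated by $\min\{\|g-x\|^2,\|g-y\|^2\}\leq\|g-m_{x,y}\|^2+\frac14$ and the triangle inequality. Only $\min_{x,y}\|m_{e,f}-m_{x,y}\|+\rho<\sqrt{3/4}$ then has to be checked over $(s,t)$.

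Two smaller errors:
\begin{itemize}
\item The second position of a vertex attached to three placed vertices is not always excluded a priori; both positions of $f$ must be examined in $H_4,H_5$.
\item Lemma~\ref{Lem: Rules 1-4} excludes a $C_5$ in a common neighbourhood only for an \emph{adjacent} pair. For a non-adjacent pair it is the realizable pentagonal bipyramid, which is exactly the frame of $H_7$. The common neighbourhood of $fg$ in $H_9$ is a $P_5$, which the lemma allows. So Lemma~\ref{Lem: Rules 1-4} disposes of none of $H_1,\dots,H_9$.
\end{itemize}
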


\begin{figure}[htbp]
  \centering
  \begin{subfigure}[t]{0.31\textwidth}
  \centering
  \begin{tikzpicture}[line cap=round, line join=round]
    \coordinate (a) at (90.000:1.300);
    \coordinate (b) at (30.000:1.300);
    \coordinate (c) at (-30.000:1.300);
    \coordinate (d) at (-90.000:1.300);
    \coordinate (e) at (-150.000:1.300);
    \coordinate (f) at (-210.000:1.300);
    \draw (a)--(b) (a)--(c) (a)--(d) (b)--(c) (b)--(d) (c)--(d) 
          (e)--(a) (e)--(b) (f)--(c) (f)--(d) (e)--(f);
    \fill (a) circle (1.5pt);
    \fill (b) circle (1.5pt);
    \fill (c) circle (1.5pt);
    \fill (d) circle (1.5pt);
    \fill (e) circle (1.5pt);
    \fill (f) circle (1.5pt);
    \node at (90.000:1.660) {$a$};
    \node at (30.000:1.660) {$b$};
    \node at (-30.000:1.660) {$c$};
    \node at (-90.000:1.660) {$d$};
    \node at (-150.000:1.660) {$e$};
    \node at (-210.000:1.660) {$f$};
  \end{tikzpicture}
  \caption{$H_1$}
  \end{subfigure}\hfill
  \begin{subfigure}[t]{0.31\textwidth}
  \centering
  \begin{tikzpicture}[line cap=round, line join=round]
    \coordinate (a) at (90.000:1.300);
    \coordinate (b) at (38.571:1.300);
    \coordinate (c) at (-12.857:1.300);
    \coordinate (d) at (-64.286:1.300);
    \coordinate (e) at (-115.714:1.300);
    \coordinate (f) at (-167.143:1.300);
    \coordinate (g) at (-218.571:1.300);
    \draw (a)--(b) (a)--(c) (a)--(d) (b)--(c) (b)--(d) (c)--(d) 
          (e)--(a) (e)--(b) (f)--(c) (f)--(d) (g)--(e) (g)--(f);
    \fill (a) circle (1.5pt);
    \fill (b) circle (1.5pt);
    \fill (c) circle (1.5pt);
    \fill (d) circle (1.5pt);
    \fill (e) circle (1.5pt);
    \fill (f) circle (1.5pt);
    \fill (g) circle (1.5pt);
    \node at (90.000:1.660) {$a$};
    \node at (38.571:1.660) {$b$};
    \node at (-12.857:1.660) {$c$};
    \node at (-64.286:1.660) {$d$};
    \node at (-115.714:1.660) {$e$};
    \node at (-167.143:1.660) {$f$};
    \node at (-218.571:1.660) {$g$};
  \end{tikzpicture}
  \caption{$H_2$}
  \end{subfigure}\hfill
  \begin{subfigure}[t]{0.31\textwidth}
  \centering
  \begin{tikzpicture}[line cap=round, line join=round]
    \coordinate (a) at (90.000:1.300);
    \coordinate (b) at (38.571:1.300);
    \coordinate (c) at (-12.857:1.300);
    \coordinate (d) at (-64.286:1.300);
    \coordinate (e) at (-115.714:1.300);
    \coordinate (f) at (-167.143:1.300);
    \coordinate (g) at (-218.571:1.300);
    \draw (a)--(b) (a)--(c) (a)--(d) (b)--(c) (b)--(d) (c)--(d) 
          (e)--(b) (e)--(c) (f)--(c) (f)--(d) (g)--(a) (g)--(e) 
          (g)--(f);
    \fill (a) circle (1.5pt);
    \fill (b) circle (1.5pt);
    \fill (c) circle (1.5pt);
    \fill (d) circle (1.5pt);
    \fill (e) circle (1.5pt);
    \fill (f) circle (1.5pt);
    \fill (g) circle (1.5pt);
    \node at (90.000:1.660) {$a$};
    \node at (38.571:1.660) {$b$};
    \node at (-12.857:1.660) {$c$};
    \node at (-64.286:1.660) {$d$};
    \node at (-115.714:1.660) {$e$};
    \node at (-167.143:1.660) {$f$};
    \node at (-218.571:1.660) {$g$};
  \end{tikzpicture}
  \caption{$H_3$}
  \end{subfigure}
  \\[2ex]
  \begin{subfigure}[t]{0.31\textwidth}
  \centering
  \begin{tikzpicture}[line cap=round, line join=round]
    \coordinate (a) at (90.000:1.300);
    \coordinate (b) at (45.000:1.300);
    \coordinate (c) at (0.000:1.300);
    \coordinate (d) at (-45.000:1.300);
    \coordinate (e) at (-90.000:1.300);
    \coordinate (f) at (-135.000:1.300);
    \coordinate (g) at (-180.000:1.300);
    \coordinate (h) at (-225.000:1.300);
    \draw (a)--(b) (a)--(c) (a)--(d) (a)--(e) (g)--(b) (g)--(c) 
          (g)--(d) (g)--(e) (b)--(c) (c)--(d) (d)--(e) (f)--(a) 
          (f)--(b) (f)--(e) (h)--(c) (h)--(d) (h)--(f);
    \fill (a) circle (1.5pt);
    \fill (b) circle (1.5pt);
    \fill (c) circle (1.5pt);
    \fill (d) circle (1.5pt);
    \fill (e) circle (1.5pt);
    \fill (f) circle (1.5pt);
    \fill (g) circle (1.5pt);
    \fill (h) circle (1.5pt);
    \node at (90.000:1.660) {$a$};
    \node at (45.000:1.660) {$b$};
    \node at (0.000:1.660) {$c$};
    \node at (-45.000:1.660) {$d$};
    \node at (-90.000:1.660) {$e$};
    \node at (-135.000:1.660) {$f$};
    \node at (-180.000:1.660) {$g$};
    \node at (-225.000:1.660) {$h$};
  \end{tikzpicture}
  \caption{$H_4$}
  \end{subfigure}\hfill
  \begin{subfigure}[t]{0.31\textwidth}
  \centering
  \begin{tikzpicture}[line cap=round, line join=round]
    \coordinate (a) at (90.000:1.300);
    \coordinate (b) at (45.000:1.300);
    \coordinate (c) at (0.000:1.300);
    \coordinate (d) at (-45.000:1.300);
    \coordinate (e) at (-90.000:1.300);
    \coordinate (f) at (-135.000:1.300);
    \coordinate (g) at (-180.000:1.300);
    \coordinate (h) at (-225.000:1.300);
    \draw (a)--(b) (a)--(c) (a)--(d) (a)--(e) (g)--(b) (g)--(c) 
          (g)--(d) (g)--(e) (b)--(c) (c)--(d) (d)--(e) (f)--(a) 
          (f)--(b) (f)--(e) (h)--(b) (h)--(d) (h)--(f);
    \fill (a) circle (1.5pt);
    \fill (b) circle (1.5pt);
    \fill (c) circle (1.5pt);
    \fill (d) circle (1.5pt);
    \fill (e) circle (1.5pt);
    \fill (f) circle (1.5pt);
    \fill (g) circle (1.5pt);
    \fill (h) circle (1.5pt);
    \node at (90.000:1.660) {$a$};
    \node at (45.000:1.660) {$b$};
    \node at (0.000:1.660) {$c$};
    \node at (-45.000:1.660) {$d$};
    \node at (-90.000:1.660) {$e$};
    \node at (-135.000:1.660) {$f$};
    \node at (-180.000:1.660) {$g$};
    \node at (-225.000:1.660) {$h$};
  \end{tikzpicture}
  \caption{$H_5$}
  \end{subfigure}\hfill
  \begin{subfigure}[t]{0.31\textwidth}
  \centering
  \begin{tikzpicture}[line cap=round, line join=round]
    \coordinate (a) at (90.000:1.300);
    \coordinate (b) at (38.571:1.300);
    \coordinate (c) at (-12.857:1.300);
    \coordinate (d) at (-64.286:1.300);
    \coordinate (e) at (-115.714:1.300);
    \coordinate (f) at (-167.143:1.300);
    \coordinate (g) at (-218.571:1.300);
    \draw (a)--(b) (a)--(c) (a)--(d) (b)--(c) (b)--(d) (c)--(d) 
          (e)--(b) (e)--(c) (e)--(d) (f)--(a) (f)--(b) (f)--(c) 
          (g)--(c) (g)--(e) (g)--(f);
    \fill (a) circle (1.5pt);
    \fill (b) circle (1.5pt);
    \fill (c) circle (1.5pt);
    \fill (d) circle (1.5pt);
    \fill (e) circle (1.5pt);
    \fill (f) circle (1.5pt);
    \fill (g) circle (1.5pt);
    \node at (90.000:1.660) {$a$};
    \node at (38.571:1.660) {$b$};
    \node at (-12.857:1.660) {$c$};
    \node at (-64.286:1.660) {$d$};
    \node at (-115.714:1.660) {$e$};
    \node at (-167.143:1.660) {$f$};
    \node at (-218.571:1.660) {$g$};
  \end{tikzpicture}
  \caption{$H_6$}
  \end{subfigure}
  \\[2ex]
  \begin{subfigure}[t]{0.31\textwidth}
  \centering
  \begin{tikzpicture}[line cap=round, line join=round]
    \coordinate (a) at (90.000:1.300);
    \coordinate (b) at (45.000:1.300);
    \coordinate (c) at (0.000:1.300);
    \coordinate (d) at (-45.000:1.300);
    \coordinate (e) at (-90.000:1.300);
    \coordinate (f) at (-135.000:1.300);
    \coordinate (g) at (-180.000:1.300);
    \coordinate (h) at (-225.000:1.300);
    \draw (a)--(b) (b)--(c) (c)--(d) (d)--(e) (e)--(a) (f)--(a) 
          (f)--(b) (f)--(c) (f)--(d) (f)--(e) (g)--(a) (g)--(b) 
          (g)--(c) (g)--(d) (g)--(e) (h)--(a) (h)--(c) (h)--(f);
    \fill (a) circle (1.5pt);
    \fill (b) circle (1.5pt);
    \fill (c) circle (1.5pt);
    \fill (d) circle (1.5pt);
    \fill (e) circle (1.5pt);
    \fill (f) circle (1.5pt);
    \fill (g) circle (1.5pt);
    \fill (h) circle (1.5pt);
    \node at (90.000:1.660) {$a$};
    \node at (45.000:1.660) {$b$};
    \node at (0.000:1.660) {$c$};
    \node at (-45.000:1.660) {$d$};
    \node at (-90.000:1.660) {$e$};
    \node at (-135.000:1.660) {$f$};
    \node at (-180.000:1.660) {$g$};
    \node at (-225.000:1.660) {$h$};
  \end{tikzpicture}
  \caption{$H_7$}
  \end{subfigure}\hfill
  \begin{subfigure}[t]{0.31\textwidth}
  \centering
  \begin{tikzpicture}[line cap=round, line join=round]
    \coordinate (a) at (90.000:1.300);
    \coordinate (b) at (45.000:1.300);
    \coordinate (c) at (0.000:1.300);
    \coordinate (d) at (-45.000:1.300);
    \coordinate (e) at (-90.000:1.300);
    \coordinate (f) at (-135.000:1.300);
    \coordinate (g) at (-180.000:1.300);
    \coordinate (h) at (-225.000:1.300);
    \draw (a)--(b) (b)--(c) (c)--(d) (d)--(a) (e)--(a) (e)--(b) 
          (e)--(c) (e)--(d) (f)--(a) (f)--(b) (f)--(c) (f)--(d) 
          (g)--(a) (g)--(b) (g)--(f) (h)--(a) (h)--(b) (h)--(f);
    \fill (a) circle (1.5pt);
    \fill (b) circle (1.5pt);
    \fill (c) circle (1.5pt);
    \fill (d) circle (1.5pt);
    \fill (e) circle (1.5pt);
    \fill (f) circle (1.5pt);
    \fill (g) circle (1.5pt);
    \fill (h) circle (1.5pt);
    \node at (90.000:1.660) {$a$};
    \node at (45.000:1.660) {$b$};
    \node at (0.000:1.660) {$c$};
    \node at (-45.000:1.660) {$d$};
    \node at (-90.000:1.660) {$e$};
    \node at (-135.000:1.660) {$f$};
    \node at (-180.000:1.660) {$g$};
    \node at (-225.000:1.660) {$h$};
  \end{tikzpicture}
  \caption{$H_8$}
  \end{subfigure}\hfill
  \begin{subfigure}[t]{0.31\textwidth}
  \centering
  \begin{tikzpicture}[line cap=round, line join=round]
    \coordinate (a) at (90.000:1.300);
    \coordinate (b) at (50.000:1.300);
    \coordinate (c) at (10.000:1.300);
    \coordinate (d) at (-30.000:1.300);
    \coordinate (e) at (-70.000:1.300);
    \coordinate (f) at (-110.000:1.300);
    \coordinate (g) at (-150.000:1.300);
    \coordinate (h) at (-190.000:1.300);
    \coordinate (i) at (-230.000:1.300);
    \draw (a)--(b) (b)--(c) (c)--(d) (d)--(e) (f)--(a) (f)--(b) 
          (f)--(c) (f)--(d) (f)--(e) (g)--(a) (g)--(b) (g)--(c) 
          (g)--(d) (g)--(e) (f)--(g) (h)--(a) (h)--(e) (h)--(f) 
          (i)--(a) (i)--(e) (i)--(g) (i)--(h);
    \fill (a) circle (1.5pt);
    \fill (b) circle (1.5pt);
    \fill (c) circle (1.5pt);
    \fill (d) circle (1.5pt);
    \fill (e) circle (1.5pt);
    \fill (f) circle (1.5pt);
    \fill (g) circle (1.5pt);
    \fill (h) circle (1.5pt);
    \fill (i) circle (1.5pt);
    \node at (90.000:1.660) {$a$};
    \node at (50.000:1.660) {$b$};
    \node at (10.000:1.660) {$c$};
    \node at (-30.000:1.660) {$d$};
    \node at (-70.000:1.660) {$e$};
    \node at (-110.000:1.660) {$f$};
    \node at (-150.000:1.660) {$g$};
    \node at (-190.000:1.660) {$h$};
    \node at (-230.000:1.660) {$i$};
  \end{tikzpicture}
  \caption{$H_9$}
  \end{subfigure}
  \caption{Forbidden subgraphs of a contact graph of unit-diameter balls, each graph is depicted with the same vertex labeling used in Lemmas \ref{Lem: Forbidding 1 and 2} through \ref{Lem: Forbidding 9}}
  \label{fig:forbidden}
\end{figure}

The proof of this lemma is given in Lemmas \ref{Lem: Forbidding 1 and 2}–\ref{Lem: Forbidding 9}. These forbidden subgraphs will suffice for our graph searches to successfully enumerate the extremal contact graphs for $n=6,7,$ and $8$ and bound the contact number for $n=6,7,8,$ and $9$. 
In the following series of lemmas, we show each $n$-vertex graph $H$ is a forbidden subgraph by showing the non-existence of a set of $n$ points $v_1,v_2,\dots, v_n \in \mathbb{R}^3$ where $v_iv_j\in E(H)\implies \|v_i-v_j\|=1$ and $v_iv_j\notin E(H)\implies \|v_i-v_j\|\geq 1$.
We show non-existence of such a set in one of two ways.
The first approach is to use a subset of the distance constraints to parameterize the coordinates of all the vertices of the embedded contact subgraph $H$ (up to rotation and translation).
With this parameterization, we then show that over the parameterized domain a pair of vertices that has an edge in $H$ is always at a distance greater than one from each other, hence no set of points can satisfy all distance constraints simultaneously.
The second approach is to use the distance constraints to parameterize the coordinates of all but one of the vertices of $H$. This last vertex will need to be at a unit distance from some of the vertices and a distance at least one from the others. We show over the parameterized domain that every point in $\mathbb{R}^3$ that is a unit distance from the required vertices, must be at a distance less than one from some other vertex.

To accomplish this last step in each approach we use interval arithmetic, as implemented by Arb via python-flint \cites{Flint,Arb}. In interval arithmetic each real quantity is represented not by an approximate floating-point number, but by a closed interval with floating-point endpoints that is guaranteed to contain the quantity. Every arithmetic operation and elementary function is implemented so that the resulting output interval contains the result of the operation applied to any value in the input interval. This is achieved by rounding each operation outward, the lower endpoint down and the upper endpoint up, so that the interval can only widen. In our application we evaluate, over a domain of parameters, a distance between two points and compare it with 1. To certify such a strict inequality it suffices to compute an interval for the distance that lies entirely on one side of 1, its lower endpoint above 1 when the distance must exceed 1, or its upper endpoint below 1 when the distance must be less than 1. A single evaluation over the whole parameter domain typically yields an interval too wide to obtain the strict inequality, so we instead partition the domain into finitely many boxes, which we call cells, and obtain the strict inequality over each cell. The strict inequality is then verified when it holds on every cell. 

One restriction on interval arithmetic is that the quantity desired to be bounded needs to be made up by a composition of not too complicated functions. This is to ensure that the implementation can produce, from an input interval, an output interval containing the image of the input under the function. By repeated application for each composing function one is able to bound the desired quantity. A full list of the functions that Arb via python-flint can use in interval arithmetic is found in the documentation \cite{Flint}. For our applications we only need the very basic functions $+,-,\times, \div, \sqrt{\phantom{x}},\cos(\phantom{x})\sin(\phantom{x}), \max(\phantom{x})$ and $\min(\phantom{x})$. It is important to note that due to this outward rounding process, interval arithmetic does not, in general, produce a tight bound for a desired quantity. However, for our purposes, the distances we want to bound away from $1$ do not come close to $1$ over the parameterized domain, which enables a coarse approach like interval arithmetic to succeed. One could, in theory, prove each of these lemmas by hand, and some of them are simple enough to do so. However, since interval arithmetic is required to greatly simplify the justification of some of them, we elect to use interval arithmetic in all cases as, with the machinery already in place, this simplifies the analysis. The interval arithmetic for each lemma can be found in the file named \texttt{Small\_sphere\_packings.py} and is attached to the arXiv version of this paper.

\begin{figure}
    \centering
    \begin{subfigure}[t]{0.55\textwidth}
\tdplotsetmaincoords{77}{100}

\begin{tikzpicture}[tdplot_main_coords, scale=2.2,
                    line cap=round, line join=round]

  \coordinate (a) at (-0.5, 0, 0);
  \coordinate (b) at  (0.5, 0, 0);
  \coordinate (c) at (0,  0.7071,  0.5);
  \coordinate (d) at (0,  0.7071, -0.5);

  \draw[thick] (a) -- (b);
  \draw[thick] (a) -- (c);
  \draw[thick] (a) -- (d);
  \draw[thick] (b) -- (c);
  \draw[thick] (b) -- (d);
  \draw[thick] (c) -- (d);

  \draw[dashed, thick]
    plot[domain=105.79:254.21, samples=80, variable=\t]
         ({0}, {0.8660*cos(\t)}, {0.8660*sin(\t)});

  \draw[dashed, thick]
    plot[domain=15.79:164.21, samples=80, variable=\s]
         ({0.8660*cos(\s)}, {0.7071 + 0.8660*sin(\s)}, {0});

  \foreach \v in {a,b,c,d} \filldraw (\v) circle (0.6pt);
  \node[anchor=south east] at (a) {$a$};
  \node[anchor=south] at (b) {$b$};
  \node[anchor=south]      at (c) {$c$};
  \node[anchor=north]      at (d) {$d$};

  \coordinate (e) at (0, -0.8660, 0);
  \filldraw (e) circle (0.6pt);
  \node[anchor=east] at (e) {$e$};
  \draw[thick] (e) -- (a);
  \draw[thick] (e) -- (b);

  \coordinate (f) at (0, 1.5731, 0);
  \filldraw (f) circle (0.6pt);
  \node[anchor=west] at (f) {$f$};
  \draw[thick] (f) -- (c);
  \draw[thick] (f) -- (d);

\end{tikzpicture}
\subcaption{Subgraph of $H_1$ and $H_2$}
\label{Fig: Forbidden 1-3a}
    \end{subfigure}
    \hfill
    \begin{subfigure}[t]{0.4\textwidth}

\tdplotsetmaincoords{72}{220}
\begin{tikzpicture}[tdplot_main_coords, scale=2.2,
                    line cap=round, line join=round]

  \coordinate (a) at (0,        0,        0.8165);
  \coordinate (b) at (0.5774,   0,        0);
  \coordinate (c) at (-0.2887,  0.5,      0);
  \coordinate (d) at (-0.2887, -0.5,      0);

  \draw[thick] (a) -- (b);
  \draw[thick] (a) -- (c);
  \draw[thick] (a) -- (d);
  \draw[thick] (b) -- (c);
  \draw[thick] (b) -- (d);
  \draw[thick] (c) -- (d);

  \draw[dashed, thick]
    plot[domain=250.53:398.94, samples=80, variable=\t]
         ({0.1443 + 0.8660*cos(\t)*0.5},
          {0.25   + 0.8660*cos(\t)*0.8660},
          {        0.8660*sin(\t)});

  \coordinate (e) at (0.3943, 0.6830, -0.7071);
  \filldraw (e) circle (0.6pt);
  \node[anchor=north east] at (e) {$e$};
  \draw[thick] (e) -- (b);
  \draw[thick] (e) -- (c);

  \draw[dashed, thick]
    plot[domain=141.05:289.47, samples=80, variable=\s]
         ({-0.2887 + 0.8660*cos(\s)},
          {                            0},
          {          0.8660*sin(\s)});

  \coordinate (f) at (-0.9958, 0, -0.5000);
  \filldraw (f) circle (0.6pt);
  \node[anchor=north west] at (f) {$f$};
  \draw[thick] (f) -- (c);
  \draw[thick] (f) -- (d);

  \foreach \v in {a,b,c,d} \filldraw (\v) circle (0.6pt);
  \node[anchor=south]      at (a) {$a$};
  \node[anchor=east]       at (b) {$b$};
  \node[anchor=south west] at (c) {$c$};
  \node[anchor=south]      at (d) {$d$};

\end{tikzpicture}
    \subcaption{Subgraph of $H_3$}
    \label{Fig: Forbidden 1-3b}
    \end{subfigure}
    \caption{Depiction of the embedded contact subgraphs of $H_1,H_2,$ and $H_3$, the dashed arcs represent the ranges $e$ and $f$ can be positioned in}
    \label{Fig: Forbidden 1-3}
\end{figure}

\begin{lemma}[Forbidding $H_1$ and $H_2$]\label{Lem: Forbidding 1 and 2}
    Let $a=(0,0,\frac{1}{2})$, $b=(0,0,-\frac{1}{2})$, $c=(\frac{1}{\sqrt2},\frac{1}{2},0)$ and $d=(\frac{1}{\sqrt2}, -\frac{1}{2},0)$ be the vertices of the regular unit-length tetrahedron. Suppose a point $e$ is at a unit distance from $a$ and $b$. Suppose a point $f$ is at a unit distance from $c$ and $d$. Suppose all unspecified distances between points are at least one (see Figure \ref{Fig: Forbidden 1-3a}). Then $e$ and $f$ are at a distance greater than one, and any point, $g$, at a unit distance from both $e$ and $f$ is at a distance less than $1$ from $a,b,c,$ or $d$.
\end{lemma}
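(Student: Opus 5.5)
We parameterize $e$ and $f$ by angles on two circles and then verify the two claimed inequalities over the resulting parameter domain using interval arithmetic.

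\textbf{Setting up the parameterization.} Since $\|e-a\|=\|e-b\|=1$, the point $e$ lies on the circle in the plane $z=0$ centered at the origin with radius $\frac{\sqrt3}{2}$. We write
\[
e=\tfrac{\sqrt3}{2}(\cos\theta,\sin\theta,0).
\]
Similarly, $f$ lies on the circle in the plane $y=0$ centered at $(\frac{1}{\sqrt2},0,0)$ with radius $\frac{\sqrt3}{2}$:
\[
f=\bigl(\tfrac{1}{\sqrt2}+\tfrac{\sqrt3}{2}\cos\varphi,\,0,\,\tfrac{\sqrt3}{2}\sin\varphi\bigr).
\]
The constraints $\|e-c\|\ge1$ and $\|e-d\|\ge1$ are linear in $\cos\theta$ and $\sin\theta$. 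They confine $\theta$ to a closed arc on the side of the circle away from $c$ and $d$, namely the dashed arc in Figure \ref{Fig: Forbidden 1-3a}. Its endpoints are the two angles where $e$ is at unit distance from $c$ or from $d$; these can be computed explicitly (they are $\theta=\pm\arccos(\cdot)$ of a closed form). In the same way, $\|f-a\|\ge1$ and $\|f-b\|\ge1$ confine $\varphi$ to an explicit closed arc. So the configuration space is a rectangle $I_\theta\times I_\varphi$ in angle space.

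\textbf{First claim: $\|e-f\|>1$.} We compute $\|e-f\|^2$ as a trigonometric polynomial in $(\theta,\varphi)$ and subdivide $I_\theta\times I_\varphi$ into cells. On each cell we evaluate $\|e-f\|^2$ with Arb balls and certify that its lower endpoint exceeds $1$. Geometrically the two arcs lie on opposite sides of the tetrahedron, so the minimum should be comfortably above $1$ and a coarse grid should suffice.

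\textbf{Second claim: any $g$ with $\|g-e\|=\|g-f\|=1$ is within distance $1$ of $a,b,c$ or $d$.} For fixed $(\theta,\varphi)$, the admissible $g$ form a circle. It is centered at $m=\frac{e+f}{2}$, lies in the plane orthogonal to $w=f-e$, and has radius $\rho=\sqrt{1-\|w\|^2/4}$. This circle is nonempty because $\|e-f\|\le 2$ must hold for $g$ to exist; cells where $\|e-f\|>2$ is certified can simply be discarded. We choose an orthonormal frame $p,q$ of $w^\perp$ built by explicit formulas from $w$ (cross products with a fixed vector, normalized), and write
\[
g=m+\rho(\cos\psi\,p+\sin\psi\,q).
\]
It then suffices to show that for every $(\theta,\varphi,\psi)$ we have
\[
\min\bigl(\|g-a\|,\|g-b\|,\|g-c\|,\|g-d\|\bigr)<1.
\]
We verify this by subdividing the three-dimensional box $I_\theta\times I_\varphi\times[0,2\pi]$ and certifying, on each cell, that the interval upper bound of this minimum lies below $1$. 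The rough reason this should hold is that $g$ is pulled toward the tetrahedron by being at unit distance from both $e$ and $f$, which lie on opposite sides of it.

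\textbf{Main obstacle.} The main obstacle is the second step. The frame $(p,q)$ must be defined so that it varies smoothly and without singularities over the whole domain; if the fixed vector used in the cross product becomes nearly parallel to $w$, the normalization amplifies interval widths. We will choose the fixed vector (for example $(0,1,0)$ or $(1,0,1)$) after checking that it stays far from $w$ on the domain. We also expect the certified margin to be small near the arc endpoints, so adaptive subdivision will be needed there. A further point to check is that no cell in which $\min<1$ fails corresponds to a genuinely valid $g$. If such cells persist, we will examine them directly to see whether the lemma is sharp there or whether finer cells resolve them.
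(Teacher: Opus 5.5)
Your proposal is correct. For the first claim it matches the paper: the same circle parameterizations, checked with interval arithmetic on cells. The paper subdivides all of $[0,2\uppi]^2$ and discards infeasible cells instead of computing the arcs explicitly, which makes no difference. The first claim does not even need a computer. On your arcs $\cos\varphi\ge\sqrt6/9$ and $\cos\theta\le-\sqrt6/9$, and
$\|e-f\|^2=2+\sqrt{3/2}\cos\varphi-\sqrt3\cos\theta\bigl(\tfrac{1}{\sqrt2}+\tfrac{\sqrt3}{2}\cos\varphi\bigr)$
is increasing in both $\cos\varphi$ and $-\cos\theta$. Evaluating at the corner gives $\|e-f\|\ge 5/3$.

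For the second claim you take a genuinely different route. You keep $g$ as a third parameter $\psi$ and certify $\min_z\|g-z\|<1$ over a three-dimensional box, which requires a frame for $w^\perp$. The paper eliminates $g$ entirely. For $x\in\{a,b\}$ and $y\in\{c,d\}$ we have $\|x-y\|=1$, so the parallelogram law gives $\min\{\|g-x\|^2,\|g-y\|^2\}\le\|g-m_{x,y}\|^2+\tfrac14$. Combined with $\|g-m_{x,y}\|\le\|m_{e,f}-m_{x,y}\|+\rho$, it suffices to certify the purely two-parameter inequality $\min_{x,y}\|m_{e,f}-m_{x,y}\|+\rho<\tfrac{\sqrt3}{2}$.

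The paper's reduction is only a sufficient condition. In exchange it gives a 2D search with no $\psi$ and no frame. Your inequality is exact and is implied pointwise by the paper's, so it holds with at least the same margin. The cost is an extra dimension of subdivision.

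Your worry about the frame is unfounded. On the domain $w_x\ge\tfrac{1}{\sqrt2}+\tfrac{\sqrt2}{3}=\tfrac{5\sqrt2}{6}$, so $\|w\times(0,1,0)\|=\sqrt{w_x^2+w_z^2}$ is bounded away from $0$.

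The one implementation point to fix, which the paper's $\rho$ shares, concerns cells straddling $\|e-f\|=2$. These do occur, for example at $\theta=\uppi$, $\cos\varphi\approx 0.28$. There the enclosure of $1-\|w\|^2/4$ contains negative numbers, so $\sqrt{\phantom{x}}$ returns an indeterminate ball and those cells will never certify. Use $\rho=\sqrt{\max(0,1-\|w\|^2/4)}$ rather than only discarding cells certified to have $\|e-f\|>2$.
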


\begin{proof}
    The point $e$ is parameterized by $e(s)=(\frac{\sqrt3}{2}\cos(s), \frac{\sqrt3}{2}\sin(s),0)$ for $s\in [0,2\uppi]$.
    Similarly $f$ is parameterized by $f(t)=(\frac{1}{\sqrt{2}}+\frac{\sqrt3}{2}\cos(t),0, \frac{\sqrt3}{2}\sin(t))$ for $t\in [0,2\uppi]$. To prove the first part of the claim we show that for any $s$ and $t$ such that $e(s)$ and $f(t)$ are at least a unit distance away from $a,b,c,$ and $d$, the distance between $e(s)$ and $f(t)$ is greater than $1$. 
    To show this we use interval arithmetic and separate our domain $[0,2\uppi]\times [0,2\uppi]$ into $240^2$ cells. 
    In each cell we check if at least one point, $(s,t)$, in the cell may have the property that both $e(s)$ and $f(t)$ are a distance at least one from $a,b,c,$ and $d$, in which case we compute a lower bound on the minimum distance between $e(s)$ and $f(t)$ over the cell and confirm it is greater than $1$. 

    Denote by $m_{x,y}$ the midpoint of the line segment connecting $x$ and $y$.
    For the second part of the claim, consider a point $g$ at a unit distance from both $e(s)$ and $f(t)$ for some $s,t\in [0,2\uppi]$.
    Such a point lies on a circle centered at $m_{e,f}$ with a radius of $\rho=\sqrt{1-\|e(s)-f(t)\|^2/4}$.
    Using the parallelogram law we obtain, for $x,y\in \{a,b,c,d\}$ that $\min\{\|g-x\|^2,\|g-y\|^2\}\leq \|g-m_{x,y}\|^2 +\frac{1}{4}$.
    With this and the triangle inequality we have
    \begin{align*}
    \min_{z\in \{a,b,c,d\}} \|g-z\|^2
    &\leq \min_{x\in \{a,b\}, y\in \{c,d\}} \|g-m_{x,y}\|^2+\frac{1}{4}
    \\&\leq \min_{x\in \{a,b\}, y\in \{c,d\}} (\|m_{e,f}-m_{x,y}\|+\rho)^2+\frac{1}{4}.
    \end{align*}
    Again we split up the domain of our parameters $s$ and $t$, $[0,2\uppi]\times [0,2\uppi]$, into $240^2$ cells and use interval arithmetic to verify that whenever $e(s)$ and $f(t)$ are a distance at least one form $a,b,c,$ and $d$ then $\min_{x\in \{a,b\}, y\in \{c,d\}} \|m_{e,f}-m_{x,y}\|+\rho <\sqrt{\frac{3}{4}}$.
    This completes the proof.
\end{proof}

\begin{lemma}[Forbidding $H_3$]\label{Lem: Forbidding 3}
    Suppose $a=(0,0,0)$, $b=({\frac{1}{\sqrt3}},0,\sqrt{\frac{2}{3}})$, $c=(-\frac{1}{2\sqrt3},\frac{1}{2},\sqrt{\frac{2}{3}})$ and $d=(-\frac{1}{2\sqrt3},-\frac{1}{2},\sqrt{\frac{2}{3}})$ be the vertices of the regular unit-length tetrahedron. Suppose a point $e$ is at a unit distance from $b$ and $c$. Suppose a point $f$ is at a unit distance from $c$ and $d$. 
    Suppose all unspecified distances between points are at least one (see Figure \ref{Fig: Forbidden 1-3b}).
    Then any point $g$ at a unit distance from $a,e,$ and $f$, is at a distance less than one from $b,c,$ or $d$.
\end{lemma}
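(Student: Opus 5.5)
We follow the template of Lemma \ref{Lem: Forbidding 1 and 2}: parametrize the two free points and verify a strict inequality on a grid of cells with interval arithmetic. The points $b,c,d$ form the top face of the tetrahedron at height $\sqrt{2/3}$, and $a$ is the apex below it at the origin.

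The point $e$ lies on the circle of radius $\frac{\sqrt3}{2}$ centered at $m_{b,c}$, in the plane perpendicular to $c-b$. Let $n_1$ and $n_2$ be an explicit orthonormal basis of that plane, for instance the unit vector from $m_{b,c}$ toward the centroid of $b,c,d$ and the unit normal to that face. Then
\[
e(s)=m_{b,c}+\tfrac{\sqrt3}{2}(\cos(s)n_1+\sin(s)n_2), \qquad s\in[0,2\uppi].
\]
Similarly, $f(t)=m_{c,d}+\frac{\sqrt3}{2}(\cos(t)n_1'+\sin(t)n_2')$ for $t\in[0,2\uppi]$; with the given coordinates this is $f(t)=(-\frac{1}{2\sqrt3}+\frac{\sqrt3}{2}\cos(t),0,\sqrt{2/3}+\frac{\sqrt3}{2}\sin(t))$.

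For each cell of $[0,2\uppi]^2$ (say $240^2$ cells, refined if needed), we first discard the cell if interval arithmetic certifies that $e(s)$ or $f(t)$ lies at distance $<1$ from one of $a,b,c,d$ (namely $a,d$ for $e$ and $a,b$ for $f$), or that $\|e(s)-f(t)\|<1$. On every surviving cell we bound the possible positions of $g$.

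To place $g$, note that it lies on the unit sphere about $a$ and on the unit spheres about $e$ and $f$. So $g$ lies on the circle $\{g:\|g-e\|=\|g-f\|=1\}$, which has center $m_{e,f}$ and radius $\rho=\sqrt{1-\|e-f\|^2/4}$. As in Lemma \ref{Lem: Forbidding 1 and 2}, this alone gives $\min_{z}\|g-z\|^2\le(\|m_{e,f}-m_{x,y}\|+\rho)^2+\frac14$ for any pair $x,y\in\{b,c,d\}$. It therefore suffices to certify on each surviving cell that
\[
\min_{x\ne y\in\{b,c,d\}}\|m_{e,f}-m_{x,y}\|+\rho<\tfrac{\sqrt3}{2}.
\]
The midpoints of the face edges are natural targets, since $g$ must sit near the top face to reach $e$ and $f$, which hang off the edges $bc$ and $cd$.

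If this crude bound fails on some cells, we will use the constraint $\|g-a\|=1$ as well. Then $g$ is one of at most two points, the intersection of the circle with the unit sphere about $a$. These can be written in closed form using only $+,-,\times,\div,\sqrt{\phantom{x}}$:
\[
g=p\pm h\,\frac{(e-a)\times(f-a)}{\|(e-a)\times(f-a)\|},
\]
where $p$ is the circumcenter of $a,e,f$ and $h=\sqrt{1-R^2}$ with $R$ the circumradius. We discard cells where $R>1$, since there $g$ does not exist, and on the remaining cells certify $\min(\|g-b\|,\|g-c\|,\|g-d\|)<1$ for both signs.

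The main obstacle is the near-degenerate cells, where $R$ is close to $1$ or $e-a$ and $f-a$ are nearly parallel. There the interval for $g$ blows up and the verification may fail. I would handle this by adaptive subdivision of the failing cells. If that is not enough, I would fall back on the midpoint bound on those cells, which does not depend on $a$ and so stays stable in the degenerate regime.
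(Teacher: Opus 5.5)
Your plan is workable, but it takes a different route from the paper, and the midpoint test you lead with cannot carry it.

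The paper parametrizes $g$ instead of $e,f$. It lets $g$ range over the unit sphere about $a$, split into $240^2$ cells in polar coordinates. Since $a$ is itself at unit distance from $b,c,g$ while $\|e-a\|\ge1$, the point $e$ is forced to be the reflection of $a$ in the plane through $b,c,g$. Likewise $f$ is the reflection of $a$ in the plane through $c,d,g$. Put $n_1=(c-b)\times(g-b)$ and $n_2=(c-d)\times(g-d)$. Then the hypotheses $\|e-a\|,\|f-a\|,\|e-d\|,\|f-b\|\ge1$ become polynomial sign conditions such as $(c\cdot n_1)^2\ge\|n_1\|^2/4$ and $(n_1\cdot c)(n_1\cdot c-n_1\cdot d)\ge0$. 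The conclusion becomes the linear condition $\max_{x\in\{b,c,d\}}g\cdot x>\frac12$. So the paper has a single unknown point, no branches, no square roots and no circumcentres. Your $(s,t)$-parametrization follows the template of Lemma~\ref{Lem: Forbidding 1 and 2} and does not need the reflection observation. It pays for that with two branches for $g$ and interval widening near tangency.

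The midpoint bound ignores $a$ and requires the whole circle $\{g:\|g-e\|=\|g-f\|=1\}$ to stay close to one edge midpoint. That is false on admissible configurations. Take $f=(0,0,2\sqrt{2/3})$, the reflection of $a$ in the face $bcd$, and $e=\frac53\bigl(\frac{1}{2\sqrt3},\frac12,\sqrt{2/3}\bigr)$. Every hypothesis on $e$ and $f$ holds, and $\|e-f\|=1$, so $\rho=\frac{\sqrt3}{2}$. The circle then rises to height $\frac{11}{6}\sqrt{2/3}+\frac56$, more than $1$ above the plane of $b,c,d$. So the test fails on every cell around this configuration, and falling back on it in degenerate cells is no safety net. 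The explicit two-point formula for $g$ is the actual proof.

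That formula is well conditioned where it matters. If $g$ exists, the triangle $aef$ has all sides at least $1$ and circumradius $R\le1$. By the law of sines all its angles lie in $[30^\circ,150^\circ]$, so $\|(e-a)\times(f-a)\|\ge\frac12$. Only $\sqrt{1-R^2}$ near $R=1$ widens intervals, and it is uniformly continuous. Since the lemma is a strict conclusion on a compact admissible set, it holds with a uniform margin, so adaptive subdivision terminates. Two implementation points are needed for this: clamp $1-R^2$ at $0$ before taking the square root, and certify $R>1$ in the division-free form $\|e-a\|^2\|f-a\|^2\|e-f\|^2>4\|(e-a)\times(f-a)\|^2$.
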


\begin{proof}
    Suppose $a$ through $g$ are as stated in the Lemma. Then $g$ is on the unit sphere and the position of $g$ determines both $e$ and $f$. The reason is that since $e$ is at a unit distance from $b,c,$ and $g$ it can be one of two points, since $a$ is one of them, the point $e$ will be the reflection of $a$ about the plane going through $b,c,$ and $g$. Similarly, $f$ is the reflection of $a$ about the plane going through $c,d,$ and $g$. 
    Let $n_1=(c-b)\times (g-b)$ then $\|e-a\|\geq 1 \iff |c\cdot \frac{n_1}{\|n_1\|}|\geq \frac{1}{2}\iff (c\cdot n_1)^2 \geq \frac{\|n_1\|^2}{4}$. Similarly, if $n_2=(c-d)\times (g-d)$ then $\|f-a\|\geq 1 \iff (c\cdot n_2)^2 \geq \frac{\|n_2\|^2}{4}$.
    Additionally we have 
    \[
    \|e-d\|^2=\|(2\frac{n_1}{\|n_1\|}\cdot c)\frac{n_1}{\|n_1\|}-d\|^2=4(\frac{n_1}{\|n_1\|}\cdot c)^2-4(\frac{n_1}{\|n_1\|}\cdot c)(\frac{n_1}{\|n_1\|}\cdot d)+1.
    \]
    So the condition $\|e-d\| \geq 1$ is equivalent to 
    \[
    4(\frac{n_1}{\|n_1\|}\cdot c)^2-4(\frac{n_1}{\|n_1\|}\cdot c)(\frac{n_1}{\|n_1\|}\cdot d)\geq 0\iff 
    (\frac{n_1}{\|n_1\|}\cdot c)(\frac{n_1}{\|n_1\|}\cdot c-\frac{n_1}{\|n_1\|}\cdot d)\geq 0\]
    \[\iff (n_1\cdot c)(n_1\cdot c-n_1\cdot d)\geq 0.
    \]
    Similarly the condition $\|f-b\|\geq 1$ is equivalent to $(n_2\cdot c)(n_2\cdot c-n_2\cdot b)\geq 0$.
    The condition that $g$ is at least a distance of $1$ away from $b,c,$ and $d$ is equivalent to $\max_{x\in \{b,c,d\}} g\cdot x\leq \frac{1}{2}$.

    We parameterize $g$ using spherical polar coordinates for $(\theta, \phi)\in[0,\uppi]\times [0,2\uppi]$ by $g=(\sin(\theta) \cos(\phi), \sin(\theta) \sin(\phi), \cos(\theta))$.
    We split the unit sphere into $240^2$ cells and use interval arithmetic to verify on each cell if a point in the cell, $(\theta, \phi)$, which parametrizes $g$ could satisfy $(c\cdot n_1)^2 \geq \frac{\|n_1\|^2}{4}$, $(c\cdot n_2)^2 \geq \frac{\|n_2\|^2}{4}$, $(n_1\cdot c)(n_1\cdot c-n_1\cdot d)\geq 0$ and $(n_2\cdot c)(n_2\cdot c-n_2\cdot b)\geq 0$. 
    If such a point in the cell satisfying these inequalities could exist, then we compute $\max_{x\in \{b,c,d\}} g\cdot x$ over the cell and verify it is greater than $\frac{1}{2}$.
    This completes the proof.
\end{proof}

\begin{figure}
    \centering
    \begin{subfigure}[t]{0.5\textwidth}
\tdplotsetmaincoords{80}{85}
\begin{tikzpicture}[tdplot_main_coords, scale=3,
                    line cap=round, line join=round]

  \coordinate (a) at (0, 0, 0);
  \coordinate (b) at (0.83516465442, 0, 0.55);
  \coordinate (c) at (0.23648031434, 0.80098505662, 0.55);
  \coordinate (d) at (-0.70124390294, 0.45360444067, 0.55);
  \coordinate (e) at (-0.63360051743, -0.54410512248, 0.55);
  \coordinate (f) at (0.31298937593, -0.84488801066, 0.43382242912);
  \coordinate (g) at (0, 0, 1.1);

  \draw[dashed,thick] (a) -- (g);

  \draw[thick] (a) -- (b);
  \draw[thick] (a) -- (c);
  \draw[thick] (a) -- (d);
  \draw[thick] (a) -- (e);

  \draw[thick] (g) -- (b);
  \draw[thick] (g) -- (c);
  \draw[thick] (g) -- (d);
  \draw[thick] (g) -- (e);

  \draw[thick] (b) -- (c);
  \draw[thick] (c) -- (d);
  \draw[thick] (d) -- (e);

  \draw[thick] (f) -- (a);
  \draw[thick] (f) -- (b);
  \draw[thick] (f) -- (e);

  \foreach \v in {a,b,c,d,e,f,g} \filldraw (\v) circle (0.6pt);

  \node[anchor=north]      at (a) {$a$};
  \node[anchor=south west] at (b) {$b$};
  \node[anchor=west] at (c) {$c$};
  \node[anchor=north west] at (d) {$d$};
  \node[anchor=south east] at (e) {$e$};
  \node[anchor=east]       at (f) {$f$};
  \node[anchor=south]      at (g) {$g$};
\end{tikzpicture}
\subcaption{Subgraph of $H_4$ and $H_5$}
\label{Fig: Forbidden 4-6a }
    \end{subfigure}
    \hfill
    \begin{subfigure}[t]{0.4\textwidth}

\tdplotsetmaincoords{72}{220}
\begin{tikzpicture}[tdplot_main_coords, scale=3,
                    line cap=round, line join=round]

  \coordinate (a) at (0,        0,        0.8165);
  \coordinate (b) at (0.5774,   0,        0);
  \coordinate (c) at (-0.2887,  0.5,      0);
  \coordinate (d) at (-0.2887, -0.5,      0);

  \coordinate (e) at (0,        0,       -0.8165);
  \coordinate (f) at (0.480,    0.833,    0.544);

  \draw[thick] (a) -- (b);
  \draw[thick] (a) -- (c);
  \draw[thick] (a) -- (d);
  \draw[thick] (b) -- (c);
  \draw[thick] (b) -- (d);
  \draw[thick] (c) -- (d);

  \draw[thick] (e) -- (b);
  \draw[thick] (e) -- (c);
  \draw[thick] (e) -- (d);

  \draw[thick] (f) -- (a);
  \draw[thick] (f) -- (b);
  \draw[thick] (f) -- (c);

  \foreach \v in {a,b,c,d,e,f} \filldraw (\v) circle (0.6pt);

  \node[anchor=south]      at (a) {$a$};
  \node[anchor=south]       at (b) {$b$};
  \node[anchor=south west] at (c) {$c$};
  \node[anchor=north]      at (d) {$d$};
  \node[anchor=north]      at (e) {$e$};
  \node[anchor=south west] at (f) {$f$};

\end{tikzpicture}
    \subcaption{Subgraph of $H_6$}
    \label{Fig: Forbidden 4-6b }
    \end{subfigure}
    \caption{Depiction of the embedded contact subgraphs of $H_4,H_5,$ and $H_6$, the dashed line segment represents a degree of freedom}
    \label{Fig: Forbidden 4-6 }
\end{figure}

\begin{lemma}[Forbidding $H_4$ and $H_5$]\label{Lem: Forbidding 4 and 5}
    Let $a=(0,0,0)$, and $g=(0,0,s)$ for some $s\geq 1$. Suppose $b,c,d,$ and $e$ are at a unit distance from both $a$ and $g$. Furthermore, suppose $b$ is at a unit distance from $c$, $c$ is at a unit distance from $d$, and $d$ is at a unit distance from $e$. Additionally suppose $f$ is a point at a unit distance from $a,b,$ and $e$. Suppose all unspecified distances between points are at least one (see Figure \ref{Fig: Forbidden 4-6a }). If a point $h$ is at a unit distance from $c,d,$ and $f$, then it is at a distance less than one from $a,b,e,$ or $g$. Moreover, if a point $h'$ is at a unit distance from $b,d,$ and $f$, then it is at a distance less than one from $a,c,e,$ or $g$.
\end{lemma}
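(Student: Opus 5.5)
We will follow the same template as Lemmas \ref{Lem: Forbidding 1 and 2} and \ref{Lem: Forbidding 3}: parameterize the configuration $a,\dots,g$ by a small number of real parameters, then certify the conclusion with interval arithmetic over a cell decomposition of the parameter domain.

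\emph{Parameterizing $b,c,d,e$.} Since $b,c,d,e$ are each at unit distance from $a=(0,0,0)$ and $g=(0,0,s)$, they lie on the circle $z=s/2$, $x^2+y^2=\rho^2$ with $\rho=\sqrt{1-s^2/4}$. We need $\rho\geq \frac{1}{2}$ for two points on this circle to be at unit distance, so $s\in[1,\sqrt3]$. Using the rotational symmetry about the $z$-axis we fix $b=(\rho,0,s/2)$. The unit-distance chain $b$–$c$–$d$–$e$ forces consecutive angular steps of exactly $\alpha(s)=2\arcsin(1/(2\rho))$. After a reflection we may take $c$ at angle $\alpha$ and $d$ at angle $2\alpha$. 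The point $e$ is then at angle $3\alpha$ or at angle $\alpha$; the second option coincides with $c$ and is excluded. By Lemma \ref{Lem: Rules 1-4} at most five common neighbours fit on the circle, so the constraint $\|e-b\|\geq1$ simply restricts $s$ to a subinterval. Since $\arcsin$ is not in our list of allowed functions, we will use $c_\alpha=\cos\alpha=1-1/(2\rho^2)$ and $s_\alpha=\sqrt{1-c_\alpha^2}$, and obtain angles $2\alpha,3\alpha$ through the double- and triple-angle formulas.

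\emph{Parameterizing $f$.} The point $f$ is at unit distance from $a,b,e$, so it is one of the (at most) two points of the intersection of three unit spheres. These are computed explicitly as the circumcenter of the triangle $abe$ plus or minus a multiple of the unit normal, the height being $\sqrt{1-R^2}$ where $R$ is the circumradius. One of the two choices is the reflection of $g$'s side, and both branches must be handled. We discard any cell in which $f$ is certified to be at distance less than one from $c,d,$ or $g$, as in the earlier proofs.

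\emph{Bounding $h$ and $h'$.} The points $h$ (unit distance from $c,d,f$) and $h'$ (unit distance from $b,d,f$) are likewise determined up to two choices by a three-sphere intersection. For each remaining cell in $s$, and each branch of $f$ and of $h$ (respectively $h'$), we verify
\[
\min_{x\in\{a,b,e,g\}}\|h-x\|<1, \qquad \min_{x\in\{a,c,e,g\}}\|h'-x\|<1 .
\]
If the triple-sphere intersection is empty on a cell (negative radicand certified), the conclusion holds vacuously there.

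\emph{Main obstacle.} The hard part is that the nested square roots degenerate near the endpoints of the admissible $s$-range, where a radicand approaches $0$ and interval enclosures blow up. If this happens, we would subdivide much more finely near those endpoints. Failing that, we would use the midpoint/parallelogram bound from Lemma \ref{Lem: Forbidding 1 and 2}, which bounds $\min\|h-x\|^2$ by $\|h-m_{x,y}\|^2+\frac14$ and avoids computing $h$ exactly. Since the problem has only one continuous parameter, we expect a modest number of cells to suffice.
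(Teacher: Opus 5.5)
Your proposal is essentially the paper's proof. As there, $b,c,d,e$ are parameterized by $s$ alone via $\cos\alpha=1-\frac{1}{2\rho^2}$ and the double/triple-angle formulas, both positions of $f$ are handled (discarding those too close to $c,d,g$), and the claims about $h,h'$ are certified by interval arithmetic over cells in $s$. The paper settles two points you leave open. First, the gap from $e$ back to $b$ forces $4\alpha\le 360^\circ$, i.e.\ exactly $s\in[1,\sqrt2]$, rather than appealing to Lemma~\ref{Lem: Rules 1-4}; this also keeps away the degenerate end $s\to\sqrt3$, where $d\to b$. Second, since $a$ is itself at unit distance from $c,d,f$ and from $b,d,f$, only the reflection of $a$ in the plane through $c,d,f$ (resp.\ $b,d,f$) needs to be checked.
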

\begin{proof}
    Given $s$, the configuration of $a,b,c,d,e,$ and $g$ is determined up to rotation around the $z$-axis, since $b,c,d,$ and $e$ lie on the circle centered at $(0,0,s/2)$ with radius $\rho=\sqrt{1-\frac{s^2}{4}}$ on the perpendicular bisector of the line segment connecting $a$ to $g$. Since $b,c,d,$ and $e$ are consecutively a unit distance apart, they are consecutively at an angle of $\delta$ on this circle where $\sin(\frac{\delta}{2})=\frac{1}{2\rho}$. Suppose without loss of generality their coordinates are $(\rho \cos(k\delta),\rho \sin(k\delta),s/2)$ where $k=0,1,2,3$. Since $\sin(\frac{\delta}{2})=\frac{1}{2\rho}$ we obtain using the double-angle identities that $\cos(\delta)=1-\frac{1}{2\rho^2}$ and $\sin(\delta)=\frac{1}{\rho}\sqrt{1-\frac{1}{4\rho^2}}$. Using the double-angle formula again and the triple-angle formula we obtain $b=(\rho,0,s/2)$, $c=(\rho \cos(\delta),\rho \sin(\delta),s/2)$, $d=(\rho (2\cos(\delta)^2-1),2\rho \sin(\delta)\cos(\delta),s/2)$, and $e=(\rho (4\cos(\delta)^3-3\cos(\delta)),\rho (3\sin(\delta)-4\sin(\delta)^3),s/2)$.
    
    Since $f$ is a unit distance from $a,b,$ and $e$, it can be one of two points. In conclusion, for each $s$ there are at most two configurations of the points $a$ through $g$ up to rotation around the $z$-axis. 

    We now show $s\leq \sqrt{2}$. 
    We first note that $\delta\leq 90^\circ$ as the angular distance between $b$ and $c$, $c$ and $d$, and $d$ and $e$ are each $\delta$, and the angular distance between $b$ and $e$ is at least $\delta$ as $\|b-e\|\geq 1$. Since $\sin(\frac{\delta}{2})=\frac{1}{2\rho}$ we obtain $\rho\geq \frac{1}{\sqrt{2}}$ which implies $s\leq \sqrt{2}$.

    We again use interval arithmetic to verify both parts of the claim, splitting up the domain $s\in [1,\sqrt{2}]$ into $20\,000$ cells. We then compute the range of coordinates $b,c,d,$ and $e$ can have. As mentioned above there are two locations $f$ can be in. For each one we check if the constraint that $f$ is a distance at least one away from $c,d,$ and $g$ is possible. For each solution of $f$ that is possible, we compute the range $h$ and $h'$ can be in. There is only one such location for each as $a$ is adjacent to $b,c,d,$ and $e$. For each such location of $h$ and $h'$ we verify that one of the points $a$ through $g$ is at a distance less than one from $h$ or $h'$, which completes the proof.
\end{proof}

\begin{lemma}[Forbidding $H_6$]\label{Lem: Forbidding 6}
    Let $a,b,c,$ and $d$ be the vertices of a regular unit-length tetrahedron. Suppose $e$ is at a unit distance from $b,c,$ and $d$. Suppose $f$ is at a unit distance from $a,b,$ and $c$. 
    Suppose all unspecified distances between points are at least one (see Figure \ref{Fig: Forbidden 4-6b }). 
    Then any point at a unit distance from $c,e,$ and $f$ is at distance less than one from $a,b,$ or $d$.
\end{lemma}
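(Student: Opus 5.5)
Fix coordinates for the regular tetrahedron as in Lemma \ref{Lem: Forbidding 3}: $a=(0,0,0)$, $b=(\frac{1}{\sqrt3},0,\sqrt{\frac23})$, $c=(-\frac{1}{2\sqrt3},\frac12,\sqrt{\frac23})$, $d=(-\frac{1}{2\sqrt3},-\frac12,\sqrt{\frac23})$. Then $e$ and $f$ are rigidly determined, with no free parameter. Since $e$ is at unit distance from $b,c,d$, it is one of the two points on the axis of the face $bcd$. One of these is $a$ itself, so $e$ must be the reflection of $a$ through the plane of $bcd$, namely $e=(0,0,2\sqrt{2/3})$. Likewise $f$ must be the reflection of $d$ through the plane of $abc$, since the other candidate is $d$. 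So the configuration $a,\dots,f$ is unique up to isometry; it is the configuration of two regular tetrahedra glued on $bcd$, with a third glued onto $abc$. First I will check that the unspecified distances among $a,\dots,f$ are at least one; this is mostly irrelevant to the claim, but a quick computation confirms consistency, e.g.\ $\|e-f\|$ and $\|d-f\|=\|a-e\|=2\sqrt{2/3}>1$.

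Next, a point $g$ at unit distance from $c$, $e$, and $f$ lies in the intersection of three unit spheres, which has at most two points (the same fact that forbids $K_{3,3}$ in Lemma \ref{Lem: Rules 1-4}). I will compute both candidates explicitly. Take the circumcenter $m$ of the triangle $cef$ in its plane, with unit normal $n=\frac{(e-c)\times(f-c)}{\|(e-c)\times(f-c)\|}$ and circumradius $R$. The two candidates are then $g_\pm=m\pm\sqrt{1-R^2}\,n$, defined only if $R\le 1$; if $R>1$ no such $g$ exists and the claim is vacuous. For each of $g_\pm$ I will compute $\|g_\pm-a\|$, $\|g_\pm-b\|$, and $\|g_\pm-d\|$ and show that at least one of them is less than one.

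The geometric expectation is as follows. Both $e$ and $f$ are adjacent to $c$, and both sit on tetrahedra sharing the edge $bc$. So any $g$ touching $c,e,f$ should be squeezed into the region around $c$ already crowded by $b$ (for one of the two solutions), and by $a$ or $d$ (for the other). Because the configuration is rigid, this is a finite computation with explicit algebraic numbers, so no cell subdivision is needed. Still, to match the paper's methodology I would evaluate these distances with Arb interval arithmetic at the exact coordinates, certifying each strict inequality $<1$ by an upper endpoint below $1$.

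The only real obstacle is the case analysis on the two candidates $g_\pm$, together with ensuring the right distance is certified below one in each case. I expect one candidate to be near $b$; the other should lie on the far side of the $cef$ plane, where I would first test $d$, then $a$. If some candidate happened to have all three distances at least one, the lemma would be false, so the computation itself is the proof. I would double-check the reflections defining $e$ and $f$ carefully, since choosing the wrong reflection is the most likely error.
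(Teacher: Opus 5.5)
Your proposal is correct and follows the paper's proof: the configuration $a,\dots,f$ is rigid, at most two points lie at unit distance from $c,e,f$, and a direct interval-arithmetic check shows each is within distance less than one of $a$, $b$, or $d$. You missed a simplification, which also corrects your guess about which point certifies the second candidate: $b$ is itself one candidate, since it is adjacent to $c$, $e$, and $f$, and the other candidate is the reflection of $b$ in the plane of $cef$, which lies at distance $2\sqrt{2/11}<1$ from $b$; so $b$, not $a$ or $d$, certifies both cases.
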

    \begin{proof}
    Let $a$ through $f$ be as stated in the lemma. The configuration is determined up to translation and rotation and is depicted in Figure \ref{Fig: Forbidden 4-6b }. 
    Suppose $g$ is at a unit distance from $c,e,$ and $f$. Then $g$ can be one of two points (one of which is $b$). We use interval arithmetic to verify both points are at a distance less than one from $a,b,$ or $d$.
    \end{proof}

\begin{figure}
    \centering
    \begin{subfigure}[t]{0.45\textwidth}
\tdplotsetmaincoords{80}{100}
\begin{tikzpicture}[tdplot_main_coords, scale=3,
                    line cap=round, line join=round]

  \coordinate (a) at ( 0.8507,  0,       0);
  \coordinate (b) at ( 0.2629,  0.8090,  0);
  \coordinate (c) at (-0.6882,  0.5000,  0);
  \coordinate (d) at (-0.6882, -0.5000,  0);
  \coordinate (e) at ( 0.2629, -0.8090,  0);

  \coordinate (f) at (0, 0,  0.5258);
  \coordinate (g) at (0, 0, -0.5258);

  \draw[thick] (a) -- (b);
  \draw[thick] (b) -- (c);
  \draw[thick] (c) -- (d);
  \draw[thick] (d) -- (e);
  \draw[thick] (e) -- (a);

  \draw[thick] (f) -- (a);
  \draw[thick] (f) -- (b);
  \draw[thick] (f) -- (c);
  \draw[thick] (f) -- (d);
  \draw[thick] (f) -- (e);

  \draw[thick] (g) -- (a);
  \draw[thick] (g) -- (b);
  \draw[thick] (g) -- (c);
  \draw[thick] (g) -- (d);
  \draw[thick] (g) -- (e);

  \foreach \v in {a,b,c,d,e,f,g} \filldraw (\v) circle (0.6pt);

  \node[anchor=south west]       at (a) {$a$};
  \node[anchor=south west] at (b) {$b$};
  \node[anchor=south] at (c) {$c$};
  \node[anchor=north east] at (d) {$d$};
  \node[anchor=north] at (e) {$e$};
  \node[anchor=south]      at (f) {$f$};
  \node[anchor=north]      at (g) {$g$};

\end{tikzpicture}
\subcaption{subgraph of $H_7$}
\label{Fig: Forbidden 7-9a}
    \end{subfigure}
    \hfill
    \begin{subfigure}[t]{0.5\textwidth}

\tdplotsetmaincoords{80}{148}
\begin{tikzpicture}[tdplot_main_coords, scale=3,
                    line cap=round, line join=round]

  \coordinate (f) at (0, 0, 0);
  \coordinate (g) at (0, 0, 1);

  \coordinate (a) at ( 0.6738,  0.5446,  0.5);
  \coordinate (b) at (-0.2887,  0.8165,  0.5);
  \coordinate (c) at (-0.8660,  0,       0.5);
  \coordinate (d) at (-0.2887, -0.8165,  0.5);
  \coordinate (e) at ( 0.6738, -0.5446,  0.5);

  \coordinate (h) at ( 0.9571,  0,      -0.2898);
  \coordinate (i) at ( 0.9571,  0,       1.2898);
  
  \draw[thick] (g) -- (f);

  \draw[thick] (a) -- (b);
  \draw[thick] (b) -- (c);
  \draw[thick] (c) -- (d);
  \draw[thick] (d) -- (e);

  \draw[thick] (f) -- (a);
  \draw[thick] (f) -- (b);
  \draw[thick] (f) -- (c);
  \draw[thick] (f) -- (d);
  \draw[thick] (f) -- (e);

  \draw[thick] (g) -- (a);
  \draw[thick] (g) -- (b);
  \draw[thick] (g) -- (c);
  \draw[thick] (g) -- (d);
  \draw[thick] (g) -- (e);

  \draw[thick] (h) -- (a);
  \draw[thick] (h) -- (e);
  \draw[thick] (h) -- (f);

  \draw[thick] (i) -- (a);
  \draw[thick] (i) -- (e);
  \draw[thick] (i) -- (g);

  \foreach \v in {a,b,c,d,e,f,g,h,i} \filldraw (\v) circle (0.6pt);

  \node[anchor=south west] at (a) {$a$};
  \node[anchor=west]       at (b) {$b$};
  \node[anchor=west]       at (c) {$c$};
  \node[anchor=south east]      at (d) {$d$};
  \node[anchor=east] at (e) {$e$};
  \node[anchor=north] at (f) {$f$};
  \node[anchor=south] at (g) {$g$};
  \node[anchor=east]       at (h) {$h$};
  \node[anchor=east]       at (i) {$i$};

\end{tikzpicture}
    \subcaption{Subgraph of $H_9$}
    \label{Fig: Forbidden 7-9b}
    \end{subfigure}
    \caption{Depiction of the embedded contact subgraphs of $H_7$ and $H_9$}
    \label{Fig: Forbidden 7-9}
\end{figure}

\begin{lemma}[Forbidding $H_7$]\label{Lem: Forbidding 7}
    Suppose $a,b,c,d,$ and $e$ form a cycle of unit distances and each of these points is at a unit distance from both $f$ and $g$.  Suppose all unspecified distances between points are at least one (see Figure \ref{Fig: Forbidden 7-9a}). Then any point at a unit distance from $a,c,$ and $f$ is at a distance less than one from $b,d,e,$ or $g$.
\end{lemma}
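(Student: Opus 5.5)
The plan is to show that the hypotheses pin down the configuration $a,\dots,g$ up to isometry, exactly as for the pentagonal bipyramid in Figure \ref{Fig: Forbidden 7-9a}. After that, the claim reduces to checking two explicit points, in the same style as the proof of Lemma \ref{Lem: Forbidding 6}.

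\textbf{Step 1: rigidity of the bipyramid.} Place $f$ at the origin and $g=(0,0,s)$ with $s\geq 1$. As in the proof of Lemma \ref{Lem: Forbidding 4 and 5}, the points $a,b,c,d,e$ lie on the circle centered at $(0,0,s/2)$ of radius $\rho=\sqrt{1-s^2/4}$, in the plane $z=s/2$. A unit chord on this circle subtends a fixed angle $\delta\in(0,\uppi]$ with $\sin(\frac{\delta}{2})=\frac{1}{2\rho}$. Going around the cycle $a,b,c,d,e,a$, each step therefore changes the angular position by $\pm\delta$, and the five signed steps must sum to $2\uppi k$ for some integer $k$. I would then split into cases.

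\begin{itemize}
\item If the signs are not all equal, the net rotation is $\pm\delta$ or $\pm3\delta$. A net of $\pm\delta$ cannot be a nonzero multiple of $2\uppi$. A net of $\pm3\delta=2\uppi$ forces $\delta=120^\circ$, so the five points occupy at most three distinct positions; two of them coincide, contradicting that all distances are at least one.
\item If all signs are equal, then $5\delta\in\{2\uppi,4\uppi\}$. For $5\delta=4\uppi$ (the pentagram), consecutive vertices are $144^\circ$ apart, so $a$ and $c$ are $72^\circ$ apart and $\|a-c\|<1$, a contradiction.
\end{itemize}

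Hence $\delta=72^\circ$. The points $a,\dots,e$ then form a regular unit pentagon, which gives $\rho=\frac{1}{2\sin 36^\circ}$ and determines $s$. So the configuration is unique up to isometry; explicit coordinates are those used for graph $F$ in Figure \ref{Fig: All}.

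\textbf{Step 2: the candidate points.} A point $h$ at unit distance from $a$, $c$ and $f$ lies on the intersection of three unit spheres. Since $a,c,f$ are not collinear, there are at most two such points, mirror images across the plane through $a,c,f$. One of them is $b$ itself, which is at distance $0<1$ from $b$. The other is the reflection $b'$ of $b$ across the plane through $a,c,f$. I would compute $b'$ from the normal $n=(a-f)\times(c-f)$ via $b'=b-2\frac{(b-f)\cdot n}{\|n\|^2}\,n$.

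\textbf{Step 3: certifying the distance bound.} It remains to verify that $b'$ is at distance less than one from $d$, $e$ or $g$. Geometrically, $b'$ is folded back into the region near $g$ and the far side of the pentagon, so I expect $\|b'-g\|<1$ or $\|b'-d\|<1$. Since all coordinates are explicit algebraic numbers, a single interval-arithmetic evaluation, with no cell subdivision, should certify the strict inequality, as in Lemma \ref{Lem: Forbidding 6}.

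I expect the main obstacle to be Step 1: ruling out the degenerate and pentagram closings of the cycle cleanly, since Step 2 and Step 3 are routine computations once rigidity is established.
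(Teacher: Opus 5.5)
Your proposal is correct and takes essentially the same route as the paper. Both show that $a,\dots,e$ must form a regular unit pentagon, so the bipyramid is rigid, and then check the two points at unit distance from $a,c,f$ (namely $b$ and its mirror image across the plane through $a,c,f$) by an explicit interval-arithmetic evaluation. Your sign-case analysis in Step 1 makes rigorous the rigidity claim that the paper only asserts, and your expectation in Step 3 holds comfortably: the reflected point lies within about $0.11$ of $g$.
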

\begin{proof}
    The points $a$ through $e$ must lie on the circle at a unit distance from both $f$ and $g$, which implies the points $a$ through $e$ must be the vertices of a regular pentagon. 
    This implies the configuration of the points $a$ through $g$ is determined up to translation and rotation. 
    Without loss of generality the circle at a unit distance from both $f$ and $g$ is centered at the origin and lies in the $z=0$ plane.
    Then, up to rotation about the $z$ axis, the coordinates of $a$ through $e$ are $(\frac{1}{2\sin(\uppi/5)}\cos(\theta),\frac{1}{2\sin(\uppi/5)}\sin(\theta),0)$ for $\theta=\frac{2\uppi k}{5}$ where $k=0,1,2,3,$ and $4$. 
    The coordinates of $f$ and $g$ are $(0,0,\pm \sqrt{1-(\frac{1}{2\sin(\uppi/5)})^2})$. 
    We again use interval arithmetic to verify the two points at a unit distance from $a,c,$ and $f$ (one of which is $b$) are at a distance less than one from $b,d,e,$ or $g$.
\end{proof}

\begin{lemma}[Forbidding $H_8$]\label{Lem: Forbidding 8}
Suppose $a,b,c,$ and $d$ form a cycle of unit distances and each of these points is at a unit distance from both $e$ and $f$. Suppose $g$ is at a unit distance from $a,b,$ and $f$. Suppose all unspecified distances between points are at least one. Then any point at a unit distance from $a,b,$ and $f$, is at a distance less than one from $c,d,e,$ or $g$.
\end{lemma}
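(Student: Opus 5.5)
The plan is to show that the hypotheses force $a$ through $f$ into a rigid configuration, the regular unit octahedron. The question then reduces to locating the two points at unit distance from $a,b,$ and $f$, one of which must be $g$. As in the proof of Lemma \ref{Lem: Forbidding 4 and 5}, I will put $f=(0,0,0)$ and $e=(0,0,s)$ with $s\geq 1$. Then $a,b,c,d$ lie on the circle centered at $(0,0,s/2)$ of radius $\rho=\sqrt{1-s^2/4}$, in the plane $z=s/2$.

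First I pin down $s$. Consecutive points of the cycle $a,b,c,d$ are unit chords of this circle, so each pair subtends the same central angle $\delta$, with $\sin(\delta/2)=\frac{1}{2\rho}$. Going around the cycle, the signed angles $\pm\delta$ must sum to a multiple of $2\uppi$. If two consecutive steps had opposite signs, then $a=c$ or $b=d$, which contradicts $\|a-c\|\geq 1$ and $\|b-d\|\geq 1$. So all four steps have the same sign and $4\delta\equiv 0 \pmod{2\uppi}$. Since $0<\delta<\uppi$, the only possibilities are $\delta=\uppi/2$ or $\delta=\uppi$. The case $\delta=\uppi$ gives $a=c$ again, so $\delta=\uppi/2$. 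Hence $a,b,c,d$ form a square of side $1$, $\rho=\frac{1}{\sqrt2}$ and $s=\sqrt2$. Explicitly, after a translation and rotation, $a,b,c,d=(\pm\frac{1}{\sqrt2},0,0),(0,\pm\frac{1}{\sqrt2},0)$ taken in cyclic order, and $e,f=(0,0,\pm\frac{1}{\sqrt2})$. This is the regular octahedron. (This also matches graph $B$ in Figure \ref{Fig: All}.)

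Next I handle the points at unit distance from $a,b,f$. These three points form a unit equilateral triangle, namely a face of the octahedron. The points at unit distance from all three are exactly the two apexes of regular tetrahedra on that face. They are the points $q\pm\sqrt{2/3}\,\nu$, where $q$ is the centroid of the face and $\nu$ is its unit normal.

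\begin{itemize}
\item The outward apex $p_{\mathrm{out}}$ lies outside the octahedron.
\item The inward apex $p_{\mathrm{in}}$ sits at depth $\sqrt{2/3}\approx 0.816$ from the face. This is more than twice the inradius $1/\sqrt6\approx0.408$, so $p_{\mathrm{in}}$ lies past the center of the octahedron, on the side of the opposite face $cde$. I will compute its coordinates and check directly that its distance to $c$ (and likewise to $d$ and $e$) is strictly less than $1$, namely $\sqrt{1/3}$. So $p_{\mathrm{in}}$ is at distance less than one from $c,d,$ or $e$.
\end{itemize}

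Since $g$ is at unit distance from $a,b,f$ and at distance at least one from $c,d,e$, it cannot be $p_{\mathrm{in}}$, so $g=p_{\mathrm{out}}$. Now let $h$ be any point at unit distance from $a,b,f$. Either $h=p_{\mathrm{in}}$, which is at distance less than one from $c,d,$ or $e$, or $h=p_{\mathrm{out}}=g$, which is at distance $0<1$ from $g$. This proves the lemma. For uniformity with the other lemmas, the final distance comparisons can also be certified with interval arithmetic, but here they are exact rational-radical computations.

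The only step needing care is the rigidity argument in the second paragraph, ruling out degenerate or non-planar orderings of the $4$-cycle on the circle. The sign-pattern argument above, together with the non-edge constraints $\|a-c\|\geq1$ and $\|b-d\|\geq1$, is what I would rely on there.
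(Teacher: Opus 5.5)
Your proof is correct and follows the paper's argument. You force $a,\dots,f$ into a regular unit octahedron, observe that $g$ is one of the two tetrahedral apexes over the face $abf$, and show that the other apex lies at distance less than one from $c$, $d$, or $e$. The paper only says ``a similar argument to the previous lemma'' for the rigidity step, and you supply the explicit sign argument; it certifies the last step with interval arithmetic, and you compute it exactly.

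One small slip: $\sqrt{2/3}=2/\sqrt6$ is exactly twice the inradius, not more than twice. So $p_{\mathrm{in}}$ is precisely the centroid of the opposite face $cde$, which is why its distance to each of $c,d,e$ is $1/\sqrt3$; the conclusion is unaffected.
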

\begin{proof}
    A similar argument to the previous lemma shows that the vertices $a$ through $f$ form the vertex set of an octahedron. 
    Without loss of generality suppose the coordinates of $a$ through $d$ are $(\pm 1/2,\pm1/2,0)$ and the coordinates of $e$ and $f$ are $(0,0,\pm 1/\sqrt{2})$. We may assume $a=(1/2,1/2,0),b=(-1/2,1/2,0),$ and $f=(0,0,-1/\sqrt{2})$. There are two points at a unit distance from $a,b,$ and $f$ (one of which is $g$), so it suffices to show one of these points is at a distance less than one from $c,d,$ or $e$.
    Again we do this using interval arithmetic which completes the proof.
\end{proof}

\begin{lemma}[Forbidding $H_9$]\label{Lem: Forbidding 9}
    Suppose $a,b,c,d,$ and $e$ are consecutively a unit distance apart. Suppose $f$ and $g$ are a unit distance apart and also a unit distance away from $a,b,c,d,$ and $e$. Suppose $h$ is a unit distance away from $a,e,$ and $f$. Suppose $i$ is a unit distance away from $a,e,$ and $g$. Suppose all unspecified distances between points are at least one (see Figure \ref{Fig: Forbidden 7-9b}). Then $i$ and $h$ are at a distance greater than one from each other. 
\end{lemma}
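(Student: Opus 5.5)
The plan is to show that the whole configuration is rigid, so that $h$ and $i$ are uniquely determined, and then to compute $\|h-i\|$ directly.

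\emph{Step 1: pin down $a$ through $g$.} Since $\|f-g\|=1$, the points $a,b,c,d,e$ all lie on the circle of points at unit distance from both $f$ and $g$. This circle is centred at $m_{f,g}$, lies in the perpendicular bisector plane of $fg$, and has the maximal radius $\rho=\frac{\sqrt3}{2}$. A chord of length $1$ on this circle subtends the angle $\delta=2\arcsin(1/\sqrt3)\approx 70.53^\circ$. The points $a,b,c,d,e$ are consecutively at unit distance, so the four arcs $ab,bc,cd,de$ each have angle $\delta$ and are laid end to end. They do not overlap, because $\|a-c\|\geq 1$ and similar constraints hold, and $4\delta<360^\circ$. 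The remaining arc from $e$ back to $a$ therefore has angle $360^\circ-4\delta\approx 77.9^\circ$. This gives $\|a-e\|\approx 1.09>1$, so the configuration is consistent and determined up to isometry. Concretely, I would take $f=(0,0,0)$ and $g=(0,0,1)$, and place $a,\dots,e$ at height $\frac12$ with angles $k\delta$ measured from the direction that makes $a$ and $e$ symmetric about the $xz$-plane, as in Figure \ref{Fig: Forbidden 7-9b}.

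\emph{Step 2: identify $h$ and $i$.} A point at unit distance from the three non-collinear points $a,e,f$ is one of at most two points, namely the two reflections of each other across the plane through $a,e,f$. The point $g$ is already at unit distance from $a$, $e$ and $f$, so it is one of these two solutions. Since $h\neq g$, $h$ must be the reflection of $g$ in the plane $\mathrm{aff}(a,e,f)$. By the same argument, $i$ is the reflection of $f$ in $\mathrm{aff}(a,e,g)$.

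\emph{Step 3: use the symmetry.} The reflection $\sigma$ in the plane $z=\frac12$ swaps $f$ and $g$ and fixes $a$ and $e$. It therefore maps $\mathrm{aff}(a,e,f)$ to $\mathrm{aff}(a,e,g)$, and hence $i=\sigma(h)$. Writing $h=(h_x,h_y,h_z)$, this gives $\|h-i\|=2\lvert h_z-\tfrac12\rvert$.

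\emph{Step 4: compute $h_z$.} It suffices to show $\lvert h_z-\frac12\rvert>\frac12$, i.e.\ $h_z<0$. I would compute the unit normal $n$ of $\mathrm{aff}(a,e,f)$ explicitly. The midpoint of $a$ and $e$ is $(\rho\cos(\alpha),0,\frac12)$ with $\alpha=(360^\circ-4\delta)/2$, so $n$ is easy to write down, and then $h=g-2(n\cdot g)\,n$. The figure suggests $h_z\approx-0.29$, which gives $\|h-i\|\approx 1.58$. To make the inequality rigorous, I would evaluate this closed-form expression either exactly, using $\cos\delta=\frac13$ and the multiple-angle formulas as in the proof of Lemma \ref{Lem: Forbidding 4 and 5}, or with interval arithmetic as in the other lemmas. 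The configuration has no free parameter, so a single interval evaluation should suffice and no subdivision into cells is needed.

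\emph{Main obstacle.} The substantive work is the rigidity argument in Steps 1 and 2. One must justify that the arcs of $a,\dots,e$ are placed consecutively without overlap. One must also check that $h$ is forced to be the second solution rather than $g$, which is simply because $h$ and $g$ are distinct vertices. Once this is done, the final numerical check has a wide margin, since $\|h-i\|\approx 1.58$ is well above $1$.
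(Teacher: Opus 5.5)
Your proposal is correct and follows the paper's route: you fix $a,\dots,g$ rigidly, note that $h$ and $i$ are forced to be the reflections of $g$ and $f$ across $\mathrm{aff}(a,e,f)$ and $\mathrm{aff}(a,e,g)$, and then evaluate $\|h-i\|$, which the paper does with interval arithmetic. Your reflection in $z=\tfrac12$ turns that last step into an exact computation: with $p=\rho\cos\alpha=\tfrac{7\sqrt3}{18}$ one gets $h_z=\tfrac{1-4p^2}{1+4p^2}=-\tfrac{11}{38}$, so $\|h-i\|=\tfrac{30}{19}>1$, and no numerics are needed.
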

\begin{proof}
    Suppose $f,g=(0,0,\pm1/2)$, then $a$ through $e$ lie on a circle centered at the origin with radius $\sqrt{3}/2$ in the plane $z=0$. The points $a$ through $e$ are consecutively separated by the angle $\delta=\arccos(1/3)$. Without loss of generality suppose $a$ through $e$ have the coordinates $(\sqrt{3}/2 \cos(k\delta),\sqrt{3}/2 \sin(k\delta),0)$ for $k=0,1,\dots ,4$. Since $h$ and $i$ are both a unit distance from three points already positioned, they can each be one of two points. However, since $f$ and $g$ are also at a unit distance from the same three points as $i$ and $h$ respectively, there is only one position for both $h$ and $i$. It suffices to verify that if $h$ and $i$ are a distance at least one from each of the points $a$ through $g$, then the distance between $h$ and $i$ is greater than one. We do this through interval arithmetic which completes the proof. 
\end{proof}

With the forbidden subgraphs established we can now prove Theorems \ref{The: Small Packings} and \ref{The: Enumeration}. 
For $n\leq 5$ all extremal contact graphs are known and their embeddings are unique up to translation and rotation.
For $n\leq 4$ they are complete graphs and for $n=5$ the contact graph is $K_5$ minus an edge. 
We prove $c(n)=3n-6$ for $n=6,7,8,$ and $9$ in that order and for each $n$, excluding $n=9$, we generate a list that contains every extremal contact graph on $n$ vertices.

Suppose we want to prove $c(n)=3n-6$ for some $n=6,7,8,$ or $9$ and we have proven $c(m)=3m-6$ for each $m$ such that $5\leq m<n$. 
Additionally suppose we have created lists, $\mathcal{G}_m$, that contain all extremal $m$-vertex contact graphs for each $m$ such that $5\leq m<n$.
We prove $c(n)=3n-6$ by contradiction and suppose $c(n)>3n-6$.
Then every extremal $n$-vertex contact graph must have the property that if the removal of $k\leq n-5$ vertices from the graph results in a graph with at least $3(n-k)-6$ edges, it must be in the list $\mathcal{G}_{n-k}$.
Otherwise we reach a contradiction as follows.
Since induced subgraphs of contact graphs are again contact graphs, if the removal of $k$ vertices resulted in a graph with greater than $3(n-k)-6$ edges, this contradicts that $c(n-k)=3(n-k)-6$, and in the case the removal of $k$ vertices resulted in a graph with exactly $3(n-k)-6$ edges that is not one of the graphs in the list $\mathcal{G}_{n-k}$, this contradicts that $\mathcal{G}_{n-k}$ contains every $(n-k)$-vertex extremal contact graph. 

We then search for all $n$-vertex graphs with greater than $3n-6$ edges that have the following properties which are necessary for it to be a contact graph on $n$ unit-diameter balls: 

\begin{enumerate}
    \item The graph must not have $K_5,K_{3,3},H_1,H_2,\dots, H_9$ as subgraphs.
    \item  The common neighborhood of any pair of vertices has at most five vertices.
    \item The common neighborhood of any adjacent pair of vertices must induce a subgraph of $P_5$.
    \item If the removal of $k$ vertices, where $k\leq n-6$, results in a graph with at least $3(n-k)-6$ edges, it must be one of the graphs listed in $\mathcal{G}_{n-k}$ (note in the implementation we only use this property for $k=1$ and $2$, the $k=1$ case imposes a minimum degree condition on our graph).
\end{enumerate}
 
We implement this graph search using SageMath. The code is attached to the arXiv version of this paper under the file name \texttt{graph\_searches.sage}. 

For each $n$ we find that no such graph satisfies these conditions, which contradicts $c(n)>3n-6$ and proves $c(n)\leq3n-6$. This together with the bound $c(n)\geq 3n-6$ for $n\geq 4$ from Theorem \ref{The: Rigid} proves $c(n)=3n-6$.

Now to generate $\mathcal{G}_n$ we search all $n$-vertex graphs that have $3n-6$ edges and satisfy the above four properties. We then add each of these graphs to the list $\mathcal{G}_n$. We find that $\mathcal{G}_6 ,\mathcal{G}_7$, and $\mathcal{G}_8$ have $2,5,$ and $13$ graphs respectively. These match the number of graphs in the candidate enumerations found in \cite{EmpAMB2011} which implies each graph is realizable as the contact graph of unit-diameter balls and hence the lists $\mathcal{G}_6 ,\mathcal{G}_7$, and $\mathcal{G}_8$ enumerate all extremal contact graphs on $6,7,$ and $8$ vertices.

\section{The contact number of packings on the FCC lattice}
In this section we prove Theorem \ref{The: FCC Upper Bound}, Theorem \ref{The: FCC asymptotics}, and Theorem \ref{The: Lower Bound}. 
To do this we transform the problem of determining $c_{A}(n)$ into an isoperimetric problem on a particular Cayley graph. 

\begin{definition}
    Let \(U\) be a finite set that generates \(\mathbb{Z}^d\) as a group and does not contain the identity.
    The (directed) Cayley graph denoted by $\mathbb{Z}^d_U$ is the graph with vertex set \(\mathbb{Z}^d\) where two vertices $u$ and $v$ are connected by an edge whenever $v-u\in U$.
    When \(U\) is symmetric (that is, $-u\in U$ for all $u\in U$), we consider \(\mathbb{Z}^d_U\) to be an undirected graph.
\end{definition}

Consider a packing of unit-diameter balls, where the center point of each ball lies on the face-centered cubic lattice $A_3$. 
Let $S\subseteq A_3$ be the set of center points in the packing. 
Two balls are in contact with each other if and only if their center points are at a unit distance from each other. 
Each element of $A_3$ has $12$ points at a unit distance from it. 
If we connect each point in $A_3$ by an edge to the 12 points at a unit distance away from it, we form the Cayley graph $A_3^U$ with vertex set $A_3$ and generated by the set of six pairs of antipodal unit vectors $U=\{( \pm \frac{1}{\sqrt{2}},\pm\frac{1}{\sqrt{2}},0), ( \pm \frac{1}{\sqrt{2}},0,\pm\frac{1}{\sqrt{2}}),(0, \pm \frac{1}{\sqrt{2}},\pm\frac{1}{\sqrt{2}})\}$. 
$A_3$ is clearly seen to be isomorphic to $\mathbb{Z}^3$. 
So, by taking the image of $U$ under this isomorphism, $A_3^U$ is isomorphic to a Cayley graph on $\mathbb{Z}^3$. 
Since $S\subseteq A_3$ and two balls are in contact with each other if and only if their centers are at a unit distance from each other, this implies that the contact graph of the packing is an induced subgraph of $A_3^U$, specifically $A_3^U[S]$.
Consequently, any upper bound on the number of edges in an $n$-vertex induced subgraph of $A_3^U$ is also an upper bound on $c_{A}(n)$.

To this end, consider an $n$-vertex subset $S$ of $A_3^U$ and the induced subgraph $A_3^U[S]$ with $e$ edges.
We define the edge boundary $\partial(S)$ of the subset $S$ to be the set of edges in $A_3^U$ where one vertex incident with this edge is in $S$ and the other is not in $S$. 
\[ \partial(S):=\{uv\in E(A_3^U): u\in S, v\notin S\}\]
Since $A_3^U$ is a $12$-regular graph, we can relate the number of edges of $A_3^U[S]$ to its edge boundary. 
For any vertex $v\in S$, define $\partial(v):=\{uv \in E(A_3^U): u\notin S\}$.
The degree of the vertex $v$ is equal to $12$ minus $|\partial(v)|$.
Each edge in $\partial(v)$ contributes one element to $\partial(S)$, giving us $|\partial(S)|=\sum_{v \in S} |\partial(v)|$, which implies that 
 
\[2e=\sum_{v\in S} \deg(v)=\sum_{v\in S} (12-|\partial(v)|)=12n-|\partial(S)|.\]

So 
\begin{align}\label{Eq: Edge to Edge Boundary}
e=6n-\frac{|\partial(S)|}{2}.
\end{align}

With \eqref{Eq: Edge to Edge Boundary}, we see that the problem of maximizing the number of edges over all $n$-vertex induced subgraphs of $A_3^U$ is equivalent to minimizing the edge boundary over all $n$-vertex subsets of $A_3^U$. 
Since the former is equivalent to determining $c_{A}(n)$, to complete the proof of Theorem \ref{The: FCC Upper Bound} and Theorem \ref{The: FCC asymptotics}, we will show the relevant lower bounds on the edge boundary of any $n$-vertex subset of $A_3^U$.

\subsection{The asymptotics of the contact number on the FCC lattice}

In this section we prove Theorem \ref{The: FCC asymptotics} by using \eqref{Eq: Edge to Edge Boundary} and a theorem of Barber and Erde in \cite{BarErd2018} regarding the edge boundary of subsets of Cayley graphs on integer lattices.
First we need some notation. For two points $x,y\in \mathbb{R}^d$ define $[x,y]$ to be the closed line segment connecting $x$ and $y$.
Given two sets $A,B\subseteq\mathbb{R}^d$ their Minkowski sum is $A+B=\{a+b: a\in A, b\in B\}$.

\begin{theorem}[Barber, Erde]\label{The: Barber Erde}
    Let $U=\{u_1,\dots, u_k\}$ be a finite set of non-zero vectors that generate $\mathbb{Z}^d$ as a group and let $Z=[0,u_1]+[0,u_2]+\dots +[0,u_k]$, then
    \[ 
    \min_{S\subseteq \mathbb{Z}^d: |S|=n}|\partial(S)|=(1+o(1))d\vol{Z}^\frac{1}{d}n^{1-\frac{1}{d}}.  
    \]
    This minimum is witnessed by intersections of scaled copies of $Z$ with $\mathbb{Z}^d$.
\end{theorem}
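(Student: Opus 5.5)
Since this is the result of Barber and Erde \cite{BarErd2018}, in the paper we would simply invoke it. If we had to prove it ourselves, the plan is to compare the discrete edge boundary with a continuous mixed-volume functional and then use Minkowski's inequality for mixed volumes. For a directed generator $u$, the edges of $\partial(S)$ in direction $u$ correspond to the points of $(S+u)\setminus S$ together with those of $S\setminus (S+u)$. So $|\partial(S)|$ is a sum over $u\in U$ of "how much $S$ fails to be invariant under translation by $u$." For a convex body $K$, the continuous analogue of $|(K+[0,u])\setminus K|$ is $\vol{K+[0,u]}-\vol{K}$. Since $Z$ is the zonotope $\sum_i[0,u_i]$, summing over $u$ gives, to first order, $d\,V(K,\dots,K,Z)$. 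Minkowski's first inequality gives
\[
V(K,\dots,K,Z)\geq \vol{K}^{\frac{d-1}{d}}\vol{Z}^{\frac1d},
\]
with equality exactly for homothets of $Z$. This identifies the constant $d\vol{Z}^{1/d}$ and the extremal shape.

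\textbf{Upper bound.} Take $S_t=tZ\cap\mathbb{Z}^d$ and choose $t$ with $|S_t|\approx n$, so that $t^d\vol{Z}\sim n$. Count boundary edges direction by direction. For each $u_i$, the boundary edges in direction $u_i$ are, up to $O(t^{d-2})$ error, the lattice points in the slab-like region $(tZ+[0,u_i])\setminus tZ$, which has volume $t^{d-1}\,d\,V(Z,\dots,Z,[0,u_i])$. Summing over $i$ and using multilinearity, the total is $t^{d-1}d\,V(Z,\dots,Z,Z)=d\,t^{d-1}\vol{Z}=(1+o(1))\,d\vol{Z}^{1/d}n^{1-1/d}$. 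To fill in intermediate values of $n$, adjust by adding or removing $o(n^{1-1/d})$-cheap layers.

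\textbf{Lower bound, the main obstacle.} An arbitrary finite $S\subseteq\mathbb{Z}^d$ is not convex, so we must pass from a discrete set to a continuous one without losing more than $o(n^{1-1/d})$. The first thing I would try is to replace $S$ by $\tilde S=S+[0,1)^d$, or more naturally $S+P$ for a fundamental cell adapted to $U$. Then show that for each $u$ the number of boundary edges in direction $u$ is at least $\vol{(\tilde S+[0,u])\setminus \tilde S}$ up to lower-order terms. After that, apply a Brunn--Minkowski type bound
\[
\vol{\tilde S+\varepsilon Z}\geq\bigl(\vol{\tilde S}^{1/d}+\varepsilon\vol{Z}^{1/d}\bigr)^d,
\]
which holds for arbitrary compact sets. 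The difficulty is that $\tilde S+Z$ is a Minkowski sum with unit-size segments, not infinitesimal ones: the discrete boundary controls $\vol{\tilde S+[0,u_i]}$ for each segment separately, not $\vol{\tilde S+Z}$.

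To bridge this gap I would follow Barber--Erde. Iterating the single-direction growth $S\mapsto S\cup(S+u_i)$ over all $i$ shows that $|S+\sum_i\{0,u_i\}|-|S|\leq|\partial(S)|$ plus a lower-order term. More precisely, each step adds at most the current boundary in that direction, and compressions (or a discrete Plünnecke-type argument) ensure those directional boundaries do not grow much. Applying this at scale, by taking $S$ large and rescaling so that $Z$ becomes infinitesimal relative to $S$, together with Brunn--Minkowski, yields $|\partial(S)|\geq(1-o(1))\,d\vol{Z}^{1/d}n^{1-1/d}$. The stability part of Minkowski's inequality then gives the statement that near-minimisers are close to scaled copies of $Z$.
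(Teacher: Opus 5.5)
The paper gives no proof of this statement. It is quoted from Barber and Erde and used as a black box, so your first sentence is exactly the paper's treatment. Your upper-bound sketch is also fine in substance, with one repair: a single new layer of $tZ\cap\mathbb{Z}^d$ can contain order $t^{d-1}$ points, so for intermediate $n$ use subadditivity $f(a+b)\le f(a)+f(b)$ of the minimal boundary $f$ instead of ``cheap layers'', letting a far-away set absorb the $O(n^{1-1/d})$ leftover points at cost $O(n^{(1-1/d)^2})$.

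If the sketch is meant as a proof, the lower bound has a genuine gap: both comparison steps it rests on are false for general $S$. First, with $\tilde S=S+[0,1)^d$ the directional comparison already fails for $u=(1,1)$, e.g.\ in the triangular lattice. For $S=\{(i,i):0\le i<N\}$ one has $|(S+u)\setminus S|=1$ but $\vol{(\tilde S+[0,u])\setminus\tilde S}=N+1$; a fixed cell causes staircasing in skew directions. Second, the claim $|S+\sum_i\{0,u_i\}|-|S|\le|\partial(S)|+o(n^{1-1/d})$ is false. For $U=\{\pm e_1,\pm e_2\}$ and $S$ a set of $n$ pairwise distant points, the left side is $8n$ while $|\partial(S)|=4n$. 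The directional boundaries of $S\cup(S+u_1)\cup\cdots$ are simply not controlled by those of $S$. Moreover, Brunn--Minkowski only becomes useful after iterating each generator $k$ times with $1\ll k\ll n^{1/d}$, so such errors would accumulate over $k|U|$ steps. Neither suggested repair works: compressing along one generator can increase the boundary in skew directions, and Pl\"unnecke-type inequalities give multiplicative control of sumsets of a suitable subset of $S$, not of the increments of $S$ itself.

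The missing idea is to give up control of $S$ and pass to such a subset, via Ruzsa's Pl\"unnecke inequality for different summands. Write $|S+\{0,u\}|=(1+\delta_u)n$, so that $\sum_{u\in U}\delta_u=|\partial(S)|/n$. Let $B_k=\sum_{u\in U}\{0,u,\dots,ku\}$, which is the sum of $k$ copies of each $\{0,u\}$. Ruzsa's inequality gives a nonempty $X\subseteq S$ with $|X+B_k|\le\prod_u(1+\delta_u)^k|X|\le e^{k|\partial(S)|/n}|X|$. In the other direction, $mZ\subseteq B_m+Z$ together with the fact that $U$ generates $\mathbb{Z}^d$ gives $B_k+[0,1)^d\supseteq[0,1)^d+(k-C)Z+g$ for a constant $C=C(U)$ and some vector $g$. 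The continuous Brunn--Minkowski inequality applied to $X+[0,1)^d$ then yields $|X+B_k|\ge(|X|^{1/d}+(k-C)\vol{Z}^{1/d})^d$. Using $|X|\le n$, these combine to $k|\partial(S)|/n\ge d\log\bigl(1+(k-C)\vol{Z}^{1/d}n^{-1/d}\bigr)$. Choosing $k=n^{1/(2d)}$ gives $|\partial(S)|\ge(1-o(1))\,d\vol{Z}^{1/d}n^{1-1/d}$. Because this passes to an uncontrolled subset $X$, it carries no shape information. Your last sentence therefore does not follow from the equality case of Minkowski's inequality; that stability statement is the later theorem of Barber, Erde, Keevash and Roberts. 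It is not needed here, because ``witnessed by'' refers only to the upper-bound construction.
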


To apply this to our situation, we only need to calculate the volume of $Z$. 
Since the theorem applies only to Cayley graphs on $\mathbb{Z}^d$, we accomplish this by first calculating the volume of $Z'=\sum_{u\in U}[0,u]$, where $U$ is the generating set of $A_3^U$. 
We then calculate the determinant of the matrix, $T$, which forms an isomorphism between $\mathbb{Z}^3$ and $A_3$. 
This determinant will relate the volume of $Z'$ with the volume of $Z=\sum_{u\in U} [0,T^{-1}(u)]$, which we can then plug into Theorem \ref{The: Barber Erde}.
Since $A_3^U$ is isomorphic to the Cayley graph on $\mathbb{Z}^3$ generated by $\{T^{-1}(u): u\in U\}$, the bound we obtain on the minimum edge boundary of the latter will also apply to the former.

The linear transformation taking $\mathbb{Z}^3$ to $A_3$ is 
\[T=\begin{pmatrix}
    \frac{1}{\sqrt{2}}&-\frac{1}{\sqrt{2}}&0\\
    \frac{1}{\sqrt{2}}&\frac{1}{\sqrt{2}}&\frac{1}{\sqrt{2}}\\
    0&0&\frac{1}{\sqrt{2}} \\
    \end{pmatrix}\]

which has a determinant of $\frac{1}{\sqrt{2}}$. 
We can relate the volume of $Z$ and $Z'$ by the following 

\begin{align}\label{Eq: Z to Z'}
\vol{\sum_{u\in U}[0,T^{-1}(u)]}=\frac{1}{\det(T)} \vol{\sum_{u\in U}[0,u]}.
\end{align}

It is a straightforward calculation to see that $\sum_{u\in U}[0,u]$ is equal to the truncated octahedron with an edge length of $2$. 
This truncated octahedron is equal to the convex hull of all coordinate permutations of $(\pm\sqrt2,0,\pm 2\sqrt2)$.
Its volume is therefore equal to $64\sqrt{2}$.
By \eqref{Eq: Z to Z'} this implies $\vol{Z}=128$ which by the result of Barber and Erde gives us
\[ \min_{S\subseteq A_3: |S|=n}|\partial(S)|= (1+o(1))12\sqrt[3]{2}n^\frac{2}{3}.\]

By \eqref{Eq: Edge to Edge Boundary} this implies 
\[
c_{A}(n)=\max_{S\subseteq A_3: |S|=n} 6n-\frac{|\partial(S)|}{2}= 6n-(1+o(1))6\sqrt[3]{2}n^\frac{2}{3}.
\]

\subsection{An upper bound for the contact number on the FCC lattice}

In this section we aim to prove Theorem \ref{The: FCC Upper Bound}.
To this end, it suffices to show that for any $n$, any $n$-vertex subset $S$ of $A_3^U$ has an edge boundary with cardinality at least $\frac{12}{\sqrt[6]{2}}n^\frac{2}{3}$.
Let $S$ be an $n$-element subset of $A_3^U$.
To find a lower bound on $|\partial(S)|$, we find a lower bound on the cardinality of a subset of $\partial(S)$ called the outer edge boundary of $S$, which is denoted by $\partial^*(S)$. 
For $u, v\in \mathbb{R}^3$, denote by $\overrightarrow{uv}$ the ray emanating from $u$ in the direction $v$. The outer edge boundary of $S$ is 
\[\partial^*(S):=\{uv\in E(A_3^U): u\in S, v\notin S, \overrightarrow{uv}\cap S=\{u\}\}.\]
It is immediate that $\partial^*(S)\subseteq \partial(S)$. 
It is also clear that for every $uv\in \partial^*(S)$ there is another edge $xy\in \partial^*(S)$ that is contained in the line through $u$ and $v$ (note that $u$ and $x$ need not be distinct).
Moreover, every line parallel to a direction in $U$ that intersects an element of $S$ will contain exactly two elements in the outer edge boundary.
Thus the cardinality of the outer edge boundary of $S$ is equal to twice the number of lines, parallel to a direction in $U$, that intersect $S$.
We can count the number of lines parallel to a direction $u\in U$ that intersect $S$ by counting the cardinality of the orthogonal projection of $S$ onto the $2$-dimensional subspace $E$ orthogonal to $u$. 
There are $6$ projections in total, as the generating set $U$ has $6$ antipodal pairs of vectors.

In summary, the cardinality of the outer edge boundary of $S$ is the sum, over the $6$ subspaces orthogonal to a direction in $U$, of twice the cardinality of $S$ when projected to this subspace.
To bound the cardinality of the projections in terms of the cardinality of $S$ we use the following inequality which can be found in \cite{Brascamp}.

 \begin{theorem}[Brascamp-Lieb, Ball, Barthe]\label{The: Brascamp-lieb}
    For a linear subspace $E$ of $\mathbb{R}^d$ let $P_{E}$ denote the orthogonal projection of $\mathbb{R}^d$ onto $E$.
    Suppose $E_1,E_2,\dots, E_k$ are linear subspaces of $\mathbb{R}^d$ and $p_1,p_2,\dots, p_k$ are positive numbers satisfying $\sum_{i=1}^k p_i P_{E_i} =I$ where $I$ denotes the $d\times d$ identity matrix.
    For non-negative $f_i \in L_1(E_i)$ the following inequality holds. 
    \[
    \int_{\mathbb{R}^d}\prod_{i=1}^k f_i( P_{E_i}(x))^{p_i}dx \leq \prod_{i=1}^k \left( \int_{E_i} f_i(x) dx\right)^{p_i}
    \]
\end{theorem}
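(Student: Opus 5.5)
The statement is the geometric Brascamp–Lieb inequality in the form of Ball and Barthe, so I would not rely on anything earlier in the paper. I would prove it by the heat-flow monotonicity method of Carlen–Lieb–Loss and Bennett–Carbery–Christ–Tao.

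\emph{Reductions.} Taking the trace of $\sum_{i=1}^k p_i P_{E_i}=I$ gives
\[
\sum_{i=1}^k p_i \dim E_i = d .
\]
Both sides of the inequality are homogeneous of degree $p_i$ in each $f_i$. By monotone convergence we may therefore assume each $f_i$ is bounded, compactly supported, strictly positive after a small Gaussian perturbation, and normalized so that $\int_{E_i} f_i=1$. The goal then becomes $\int_{\mathbb{R}^d}\prod_{i=1}^k f_i(P_{E_i}x)^{p_i}\,dx\le 1$.

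\emph{Heat flow and the limit.} For $t>0$ let $f_i^t=e^{t\Delta_{E_i}}f_i$ be the heat evolution on $E_i$, and set
\[
Q(t)=\int_{\mathbb{R}^d}\prod_{i=1}^k f_i^t(P_{E_i}x)^{p_i}\,dx .
\]
We have $Q(t)\to Q(0)$, the left-hand side, as $t\to 0^+$. As $t\to\infty$, $f_i^t$ is asymptotically the centered Gaussian $(4\pi t)^{-\dim E_i/2}e^{-|y|^2/4t}$ on $E_i$. Using $\sum_i p_i|P_{E_i}x|^2=\langle x,x\rangle$ and $\sum_i p_i\dim E_i=d$, the product of these Gaussians is exactly the standard heat kernel on $\mathbb{R}^d$ at time $t$, which integrates to $1$. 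So $Q(t)\to 1$, and it suffices to show that $Q$ is nondecreasing.

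\emph{Monotonicity.} Write $u_i=\log f_i^t$, so that $\partial_t u_i=\Delta u_i+|\nabla u_i|^2$ on $E_i$. Let $v_i(x)=\nabla u_i(P_{E_i}x)\in E_i$ and $F=\prod_i (f_i^t\circ P_{E_i})^{p_i}$. Differentiating gives $Q'(t)=\int F\sum_i p_i(\Delta u_i+|\nabla u_i|^2)\circ P_{E_i}$. By the geometric condition, $\sum_i p_i\Delta_{E_i}(g\circ P_{E_i})$ equals $\Delta_{\mathbb{R}^d}$ of the composite. Integrating by parts against $F$, with $\nabla F=F\sum_i p_iv_i$, yields
\[
Q'(t)=\int_{\mathbb{R}^d}F\Bigl(\sum_i p_i|v_i|^2-\Bigl|\sum_i p_iv_i\Bigr|^2\Bigr)dx .
\]
The key pointwise inequality is $\bigl|\sum_i p_iv_i\bigr|^2\le\sum_i p_i|v_i|^2$ whenever $v_i\in E_i$. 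To prove it, put $w=\sum_i p_iv_i$. Then
\[
|w|^2=\sum_i p_i\langle v_i,P_{E_i}w\rangle\le\Bigl(\sum_i p_i|v_i|^2\Bigr)^{1/2}\Bigl(\sum_i p_i|P_{E_i}w|^2\Bigr)^{1/2},
\]
and $\sum_i p_i|P_{E_i}w|^2=\langle\sum_i p_iP_{E_i}w,w\rangle=|w|^2$. Hence $Q'\ge0$, and therefore $Q(0)\le\lim_{t\to\infty}Q(t)=1$.

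\emph{Main obstacle.} The conceptual core is the short Cauchy–Schwarz step above. I expect the real work to be analytic: justifying differentiation under the integral and the integration by parts, which needs decay of $F\nabla u_i$ at infinity, and justifying the $t\to\infty$ Gaussian limit. I would handle both by working first with compactly supported positive $f_i$ convolved with a tiny Gaussian, so that all $u_i$ have at most quadratic growth and Gaussian tails, and then removing the regularization by monotone convergence.
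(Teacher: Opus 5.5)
Your proof is correct. It cannot match the paper's route, because the paper gives no proof: it quotes the geometric Brascamp--Lieb inequality as a black box from B\"or\"oczky's survey. It then uses it only for six planes in $\mathbb{R}^3$ with $p_i=\frac14$ and with $f_i$ indicator functions of compact sets.

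What you supply is the Carlen--Lieb--Loss / Bennett--Carbery--Christ--Tao heat-flow argument. The names in the theorem's title refer to a different line: Ball's rank-one geometric form, and Barthe's mass-transportation proof of the general case.

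Your route is self-contained and shows exactly where the hypothesis $\sum_i p_iP_{E_i}=I$ enters:
\begin{itemize}
\item through $\sum_i p_i\dim E_i=d$ and $\sum_i p_i|P_{E_i}x|^2=|x|^2$, which give both the Gaussian limit $Q(t)\to1$ and the Gaussian decay of $F$ that justifies differentiating under the integral and integrating by parts;
\item in the Cauchy--Schwarz step.
\end{itemize}
Barthe's transport argument instead yields the reverse Brascamp--Lieb inequality at the same time, and is the usual tool for equality cases. For the paper's application the $f_i$ are already bounded and compactly supported, so your first reduction is not even needed there.

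There are a few inaccuracies, none of which affects the argument.
\begin{itemize}
\item The identity $\Delta_{\mathbb{R}^d}(u_i\circ P_{E_i})=(\Delta_{E_i}u_i)\circ P_{E_i}$ holds for each $i$ separately, since $u_i\circ P_{E_i}$ is constant in directions orthogonal to $E_i$. It does not use the geometric condition, as your sentence suggests.
\item ``Compactly supported, strictly positive after a small Gaussian perturbation'' is internally inconsistent, but the perturbation is unnecessary. For every $t>0$, $f_i^t$ is smooth and strictly positive as soon as $\int f_i>0$; the case $\int f_i=0$ is trivial since then the left side vanishes.
\item Convolving with a tiny Gaussian is just running the heat flow for a short time. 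So ``removing the regularization'' is the limit $t\to0^+$, not a monotone-convergence step. There you only need $Q(0)\le\liminf_{t\to0^+}Q(t)$. Fatou's lemma gives this, because $f_i^t\to f_i$ a.e.\ on $E_i$ and preimages of null sets under $P_{E_i}$ are null in $\mathbb{R}^d$.
\end{itemize}
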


 For $i=1,2,\dots, 6$ let $E_i$ denote the 2-dimensional subspace of $\mathbb{R}^3$ orthogonal to $u_i$, where 
 \begin{align*}
 u_1,u_2,\dots, u_6&=\frac{1}{\sqrt{2}}(1,0,1),\frac{1}{\sqrt{2}}(0,1,1),\frac{1}{\sqrt{2}}(1,-1,0),
 \\ &\mathrel{\phantom{=}}\frac{1}{\sqrt{2}}(1,0,-1),\frac{1}{\sqrt{2}}(0,1,-1),\frac{1}{\sqrt{2}}(1,1,0).
 \end{align*}
 The first claim is that $\sum_{i=1}^6 \frac{1}{4} P_{E_i} =I$. 
 For any $i=1,2,\dots, 6$, since $u_i$ is a unit vector, the projection onto $E_i$ is $P_{E_i}=I-P_{u_i}$, where $P_{u_i}$ denotes the orthogonal projection onto $\mathrm{span}(u_i)$. 
 So for any $x\in \mathbb{R}^3$
 \[\sum_{i=1}^6 \frac{1}{4} P_{E_i}(x)=\frac{1}{4} \sum_{i=1}^6 \left(I(x)-P_{u_i}(x)\right)=\frac{1}{4}(6I(x)-\sum_{i=1}^6 \langle u_i,x\rangle u_i ).\]
 It is a straightforward calculation to see that 
 \[
 \sum_{i=1}^6 \langle u_i,x\rangle u_i=2x.
 \]

So we obtain that \(\sum_{i=1}^6 \frac{1}{4} P_{E_i}=\frac{1}{4}(6I-2I)=I\) as claimed.
Next we need to define $f_i: E_i \rightarrow \mathbb{R}$, which will relate the cardinality of $S$ to the cardinality of $S_i:=P_{E_i}(S)$.
We accomplish this by placing a small translate of a polytope $P$, to be determined later, around each point of $S$.
We make $P$ small enough so that $S+P$ forms a packing of translates of $P$ in $\mathbb{R}^3$.
Since $S+P$ is a packing, we have $|S|\vol P=\vol {S+P}$.
Now we will define $f_i$ to be the indicator function of $P_{E_i}(S+P)$, that is, $\chi_{P_{E_i}(S+P)}$, which is non-negative and an element of $L_1(E_i)$. 
Notice additionally that $\chi_{S+P}(x)\leq\prod_{i=1}^6\chi_{P_{E_i}(S+P)}(P_{E_i}(x))$.

From these observations we obtain
\begin{align*}
    |S|\vol{P}&=
    \vol{S+P}=
    \int_{\mathbb{R}^3}\chi_{S+P}(x) dx \leq \int_{\mathbb{R}^3}\prod_{i=1}^6\left( \chi_{P_{E_i}(S+P)}(P_{E_i}(x))\right)^{\frac{1}{4}}dx\\
    &\leq \prod_{i=1}^6\left(\int_{E_i}\chi_{P_{E_i}(S+P)}(x)dx\right)^{\frac{1}{4}}=
    \prod_{i=1}^6 \area{P_{E_i}(S+P)}^{\frac{1}{4}} \\
    &\leq \prod_{i=1}^6\Bigl( \area{P_{E_i}(P)}|S_i|\Bigr)^{\frac{1}{4}} = \left(\prod_{i=1}^6 \area{P_{E_i}(P)}^\frac{1}{6}\right)^\frac{6}{4} \left(\prod_{i=1}^6|S_i|^\frac{1}{6}\right)^{\frac{6}{4}}\\
    &\leq \left(\frac{1}{6}\sum_{i=1}^6 \area{P_{E_i}(P)}\right)^\frac{3}{2} \left(\frac{1}{6}\sum_{i=1}^6|S_i|\right)^\frac{3}{2} \\
    &=\left(\frac{1}{6}\sum_{i=1}^6 \area{P_{E_i}(P)}\right)^\frac{3}{2} \left(\frac{1}{12}|\partial^*(S)|\right)^\frac{3}{2}\\
    &=
    (\frac{1}{72})^\frac{3}{2}\left(\sum_{i=1}^6 \area{P_{E_i}(P)}\right)^\frac{3}{2}|\partial^*(S)|^\frac{3}{2} \\
    &\leq (\frac{1}{72})^\frac{3}{2} |\partial(S)|^\frac{3}{2}\left(\sum_{i=1}^6 \area{P_{E_i}(P)}\right)^\frac{3}{2}.
\end{align*}

Here, the second inequality is by Theorem \ref{The: Brascamp-lieb}, the fourth is the AM-GM inequality applied twice, and the fifth is that the outer edge boundary is a subset of the edge boundary.

All together we obtain 
\begin{align}\label{Eq: Brascamp application}
    \frac{72|S|^\frac{2}{3}\vol{P}^\frac{2}{3}}{\sum_{i=1}^6 \area{P_{E_i}(P)}}\leq |\partial(S)|.
\end{align}

In order to obtain the best lower bound using \eqref{Eq: Brascamp application}, we need to choose a polytope $P$ that, for a fixed volume, minimizes the sum of the areas of the projections of $P$ onto each $E_i$. 
The best polytope, unsurprisingly, turns out to be the truncated octahedron.
To see this, consider the following

\begin{align*}
\sum_{i=1}^6 \area{P_{E_i}(P)}&=\sum_{i=1}^6\lim_{\epsilon\rightarrow0^+}\frac{\vol{P+\epsilon[0,u_i]}-\vol{P}}{\epsilon}\\
&=\lim_{\epsilon\rightarrow0^+}\sum_{i=1}^6\frac{\vol{P+\epsilon[0,u_i]}-\vol{P}}{\epsilon}\\
&=\lim_{\epsilon\rightarrow0^+}\sum_{i=1}^6\frac{\vol{P+\epsilon\sum_{k=1}^i[0,u_k]}-\vol{P+\epsilon\sum_{k=1}^{i-1}[0,u_k]}}{\epsilon}\\
&=\lim_{\epsilon\rightarrow0^+}\frac{\vol{P+\epsilon\sum_{k=1}^6[0,u_k]}-\vol{P}}{\epsilon}.
\end{align*}
We now apply Minkowski's first inequality.
\begin{theorem}[Minkowski's first inequality found in \cite{Gardner}]
    Let $K$ and $L$ be convex bodies in $\mathbb{R}^d$, then 
    \[ 
    \frac{1}{d} \lim_{\epsilon\rightarrow0^+}\frac{\vol{K+\epsilon L}-\vol{K}}{\epsilon}\geq \vol{K}^\frac{d-1}{d}\vol{L}^\frac{1}{d},
    \]
    with equality if and only if $K$ and $L$ are homothetic.
\end{theorem}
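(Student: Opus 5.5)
The plan is to derive the inequality from the Brunn--Minkowski inequality, using the concavity of the function
\[
f(t)=\vol{K+tL}^{\frac{1}{d}},\qquad t\geq 0 .
\]
Throughout I assume $\vol{K}>0$, so that the right-hand side is meaningful. The key identity is
\[
K+tL=(1-t)K+t(K+L)\qquad\text{for } t\in[0,1],
\]
which holds because $K$ is convex, so $(1-t)K+tK=K$.

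First I show that $f$ is concave on $[0,\infty)$. Applying Brunn--Minkowski to $K+sL$ and $K+rL$ with weight $\lambda$, and using convexity of $K$ and $L$ to rewrite $(1-\lambda)(K+sL)+\lambda(K+rL)=K+((1-\lambda)s+\lambda r)L$, gives this directly. Concavity has two consequences:
\begin{itemize}
\item the right derivative $f'(0^+)$ exists;
\item $f'(0^+)\geq f(1)-f(0)$.
\end{itemize}
Alternatively, Steiner's formula shows that $\vol{K+tL}$ is a polynomial in $t$, which also gives existence of the limit in the statement.

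Next, Brunn--Minkowski in its additive form, $\vol{K+L}^{1/d}\geq \vol{K}^{1/d}+\vol{L}^{1/d}$, gives
\[
f(1)-f(0)\geq \vol{L}^{1/d}.
\]
By the chain rule,
\[
f'(0^+)=\frac{1}{d}\vol{K}^{\frac{1}{d}-1}\lim_{\epsilon\to0^+}\frac{\vol{K+\epsilon L}-\vol{K}}{\epsilon}.
\]
Combining these and multiplying through by $\vol{K}^{\frac{d-1}{d}}$ yields the stated inequality.

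For the equality case, the "if" direction is a direct computation. If $L=\alpha K+x$ with $\alpha>0$, then $\vol{K+\epsilon L}=(1+\epsilon\alpha)^d\vol{K}$, and both sides equal $\alpha\vol{K}$.

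For the "only if" direction, suppose equality holds. Then both inequalities above are equalities, so $f'(0^+)=f(1)-f(0)$. Since $f$ is concave, this forces $f$ to be affine on $[0,1]$, because a concave function lies below its tangent line at $0$. In particular, equality holds in Brunn--Minkowski for the pair $K$, $L$ (from $f(1)=f(0)+\vol{L}^{1/d}$). The equality case of Brunn--Minkowski for convex bodies of positive volume then says that $K$ and $L$ are homothetic. If $\vol{L}=0$, the inequality is trivial, and equality forces the mixed volume $V(K,\dots,K,L)$ to vanish, which for a full-dimensional $K$ means $L$ is a point, a degenerate homothet.

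The main obstacle is the equality characterization. It rests entirely on the equality case of Brunn--Minkowski, which is itself nontrivial. I would cite it from Gardner rather than reprove it, and spend care only on the degenerate case $\vol{L}=0$. For the application in this paper only the inequality direction and the "if" direction (the truncated octahedron $P$ homothetic to $\sum_k[0,u_k]$) are actually needed.
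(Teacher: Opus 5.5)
Your proof is correct. It is essentially the standard derivation from the Brunn--Minkowski inequality used in Gardner's survey: concavity of $t\mapsto \vol{K+tL}^{1/d}$, comparison of the right derivative at $0$ with the chord to $t=1$, and reduction of the equality case to equality in Brunn--Minkowski. The paper gives no proof of its own and simply cites that source. Two of your steps are unnecessary. The detour through affineness of $f$ can be dropped, since equality in your second inequality is already equality in Brunn--Minkowski. The $\vol{L}=0$ case disappears under the usual convention that convex bodies have nonempty interior; the equality clause needs that convention anyway, since for $d\geq 2$ and $K$ a single point, equality holds trivially.
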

From this we obtain
\[
\frac{1}{3}\sum_{i=1}^6 \area{P_{E_i}(P)}\geq \vol{P}^\frac{2}{3}\vol{\sum_{k=1}^6[0,u_k]}^\frac{1}{3}
\]
with equality if and only if $P$ is homothetic to $\sum_{k=1}^6[0,u_k]$. 
Since $\sum_{k=1}^6[0,u_k]$ is the truncated octahedron, the bound \eqref{Eq: Brascamp application} is sharpest when $P$ is homothetic to the truncated octahedron.
Additionally, notice that the bound in \eqref{Eq: Brascamp application} is invariant under dilation of $P$, implying we can do the calculation of the volume and projections when the edge length of the truncated octahedron is $1$. The volume in this case is $8\sqrt{2}$. All six projections are congruent and each has an area of $4\sqrt{2}$. One such projection is depicted in Figure \ref{Fig: TruOct}. This implies
\[
|\partial(S)|\geq 12(2^{-\frac{1}{6}})|S|^{\frac{2}{3}}.
\]
By \eqref{Eq: Edge to Edge Boundary}, this implies Theorem \ref{The: FCC Upper Bound}.

\subsection{A lower bound for the contact number on the FCC lattice}

In this section we prove Theorem \ref{The: Lower Bound}, which will follow immediately after proving the following lemma.

\begin{lemma}\label{Lem: LB}
    Consider the truncated octahedron with side length $k-1$ positioned so that exactly $k$ lattice points of $A_3$ are contained in each of its edges.
    The number of lattice points of $A_3$ contained in the truncated octahedron equals
    \[
    16k^3-33k^2+24k-6.
    \]
    The number of edges in the induced subgraph of $A_3^U$ on the $n$ lattice points contained in the truncated octahedron is
    \[6n-6(8k^2-11k+4).\]
\end{lemma}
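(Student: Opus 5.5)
The plan is to reduce both counts to lattice-point counts in explicit polytopes. For the edge count I will reuse the edge-boundary identity \eqref{Eq: Edge to Edge Boundary} together with the outer-edge-boundary count from the proof of Theorem \ref{The: FCC Upper Bound}.

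\textbf{Coordinates.} Scale by $\sqrt2$, so that $A_3$ becomes the lattice $D_3=\{(x,y,z)\in\mathbb{Z}^3: x+y+z \text{ even}\}$. The twelve neighbours of a point are then the translates by the vectors $(\pm1,\pm1,0)$ and their coordinate permutations. Put $m=k-1$. The truncated octahedron in the statement should be $T_m=\{|x|,|y|,|z|\le 2m,\ |x|+|y|+|z|\le 3m\}$, whose vertices are the coordinate permutations of $(0,\pm m,\pm 2m)$. Consecutive vertices such as $(0,m,2m)$ and $(m,0,2m)$ differ by $m(1,-1,0)$, so each edge has length $m$ in the original scaling and carries exactly $k$ lattice points. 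I will check that this is the intended positioning up to an isometry of $A_3$.

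\textbf{Counting $n$.} I will count the points of $D_3$ in the octahedron $|x|+|y|+|z|\le 3m$ layer by layer in $z$, keeping track of the parity condition. From this I subtract the six congruent corner pyramids cut off by the planes $|x|=2m$ (and analogues); each pyramid is a smaller octahedral cap whose layers are easy to count. I will sum separately over even and odd layers, so that the parity of $3m$ is handled uniformly. I expect a cubic in $m$, and I will sanity-check it against $k=1$ ($n=1$) and $k=2$ ($n=38$, by direct enumeration).

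\textbf{Counting edges.} Let $S=D_3\cap T_m$. Since $S$ is the set of lattice points of a convex body, every line parallel to some $u\in U$ meets $S$ in a set of consecutive lattice points along that line. Hence every boundary edge is an outer boundary edge, so $\partial(S)=\partial^*(S)$. By the counting argument of the previous subsection, $|\partial(S)|=2\sum_{i=1}^6|P_{E_i}(S)|$, and \eqref{Eq: Edge to Edge Boundary} gives $e=6n-\sum_{i=1}^6|P_{E_i}(S)|$. The six directions are permuted by the symmetry group of $T_m$, so all six projections have the same size. It therefore suffices to show $|P_{E_1}(S)|=8k^2-11k+4$; as a check, $k=1$ gives $1$.

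\textbf{The projection count (main obstacle).} Take $u=(1,1,0)$. Then $P_{E}(S)$ is in bijection with the lattice lines $\{p+tu\}$ that meet $S$. These lines can be indexed by the invariants $(x-y,\,z)$ subject to the appropriate parity condition. For each index I will determine the interval of $t$ for which $p+tu\in T_m$. The line meets $S$ exactly when this interval contains a lattice point, and deciding this is the delicate step. In the interior of the projected polygon (the octagon shown in Figure \ref{Fig: TruOct}) the fibre has length at least one lattice step, so a lattice point exists. Near the boundary of the polygon, short fibres could in principle miss every lattice point. My first approach is to show that every boundary face of $P_E(T_m)$ is the image of an edge or face of $T_m$ that contains lattice points. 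Failing that, I will use the explicit inequalities to show that every admissible index $(x-y,z)$ admits an integer $t$. Once this is settled, $|P_E(S)|$ is a count of lattice points in a lattice octagon, which I will compute by Pick's theorem or by summing rows, and then check against $k=1,2$.
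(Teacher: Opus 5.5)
Your edge count takes a genuinely different and valid route from the paper's. However, your coordinate model is wrong whenever $k$ is even.

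\textbf{The parity error.} Every edge of $T_m$ lies in a hexagonal facet $\pm x\pm y\pm z=3m$. On such a facet every integer point has coordinate sum $\equiv 3m\equiv m \pmod 2$. So when $m=k-1$ is odd, the origin-centred $T_m$ has no points of $D_3$ on any edge, and it is not the body in the lemma. For example, with $k=2$ it contains only $19$ points of $D_3$: the origin, its twelve neighbours, and $(\pm2,0,0),(0,\pm2,0),(0,0,\pm2)$. That is not $38$.

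Treating even and odd layers separately cannot repair this, because the body itself has to move. For odd $m$ use $T_m+(1,0,0)$, which is centred at an octahedral hole. Equivalently, and uniformly in $m$, let $S$ be the set of points of $\mathbb{Z}^3\cap T_m$ whose coordinate sum is $\equiv m \pmod 2$. This set is a translate of $D_3$ intersected with the body, and it is preserved by all signed permutations, so your six-projection symmetry argument survives. The octahedron-minus-six-pyramids count then works uniformly: the octahedron has $3m+1=3k-2$ points per edge, and each pyramid has $\sum_{i=1}^{m} i^2$ points. This part is exactly the paper's computation.

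\textbf{The projection count.} With that fix, the step you call the main obstacle is not delicate. Up to translation, $T_m$ is the zonotope $m\sum_{i=1}^{6}[0,\sqrt2\,u_i]$. Hence the fibre over any point of $P_E(T_m)$ contains a translate of $m[0,u]$, which is $m$ lattice steps long.

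Explicitly, index the lines in direction $u=(1,1,0)$ by $(a,b)=(x-y,z)$. The line $\{(y+a,y,b)\}$ carries points of the right coset iff $a+b\equiv m \pmod 2$. It meets $S$ iff $|b|\le 2m$ and $|a|+|b|\le 3m$. To see this, pick an integer $y$ between $-a$ and $0$ with $|y|,|y+a|\le 2m$; this is possible since $|a|\le 3m\le 4m$. Then $|y|+|y+a|=|a|$.

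So $P_E(T_m)$ is a hexagon, not an octagon. Row $b$ contributes the $3m-|b|+1$ values of $a$ with $|a|\le 3m-|b|$ and $a\equiv 3m-|b| \pmod 2$. Summing,
\[
\sum_{|b|\le 2m}(3m-|b|+1)=(4m+1)(3m+1)-2m(2m+1)=8m^2+5m+1=8k^2-11k+4,
\]
as you need.

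\textbf{Comparison with the paper.} The paper classifies boundary lattice points by the face containing them and asserts the degrees $9,8,7,6$. It then counts relative-interior points of the eight hexagons (triangular lattice), the six squares (square grid), the $36$ edges and the $24$ vertices.

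Your route replaces this case analysis with two facts: $\partial(S)=\partial^*(S)$ for the lattice points of a convex body, and the octahedral symmetry. Together these reduce everything to one planar count. This spares you verifying the degree of each boundary type, for instance that both hexagon--hexagon and hexagon--square edges give degree $7$. It also shows that the inequality $|\partial^*(S)|\le|\partial(S)|$ used in the upper-bound section is an equality for this construction. The paper's count is more local and stays closer to the picture.
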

\begin{proof}
    We begin by observing that for any $a\geq2$ there are $\sum_{i=1}^a i^2=\frac{2a^3+3a^2+a}{6}$ lattice points of $A_3$ such that their convex hull is a square pyramid with an edge length of $a-1$ where each edge of the square pyramid has exactly $a$ lattice points on it.
    It follows that there are $\frac{2a^3+3a^2+a}{3}-a^2=\frac{2a^3+a}{3}$ lattice points of $A_3$ such that their convex hull is an octahedron with an edge length of $a-1$ and exactly $a$ lattice points on each edge.
    Letting $a=3k-2$, we want to compute how many lattice points are in the truncated octahedron with an edge length of $k-1$.
    This can be computed by removing $6$ square pyramids with an edge length of $k-2$ from the octahedron with an edge length of $a-1$. 
    It follows that the number of lattice points in the truncated octahedron with an edge length of $k-1$ and $k$ lattice points per edge is
    \[
    \frac{2(3k-2)^3+(3k-2)}{3}-(2(k-1)^3+3(k-1)^2+(k-1))=16k^3-33k^2+24k-6.
    \]
    For the second part of the lemma, notice that each vertex of $A_3^U$ in the relative interior of the truncated octahedron has a degree of $12$. A vertex on the boundary has degree $9$ if it lies in the relative interior of a hexagonal face, degree $8$ if in the relative interior of a square face, degree $7$ if in the relative interior of an edge, and degree $6$ if it is a vertex of the truncated octahedron.   
    
    In the $2$-dimensional section of the $A_3$ lattice containing a hexagonal face of the truncated octahedron, the $A_3$ lattice is the unit-length triangular lattice.
    A regular hexagon with $k$ lattice points per side contains $3(k-1)^2-3(k-1)+1$ points of the triangular lattice in its relative interior.
    In the 2-dimensional section of the $A_3$ lattice containing a square face, the $A_3$ lattice is isomorphic to the integer grid.
    A square with $k$ lattice points per side contains $(k-2)^2$ lattice points in its relative interior.
    There are $36$ edges of the truncated octahedron each with $k-2$ points in their relative interiors.
    Finally the truncated octahedron has $24$ vertices.

    With these observations made, we obtain
    \begin{align*}
        \sum_{v} \deg(v)&=12n-\sum_{v}|\partial(v)|\\
        &=12n-3\times8(3(k-1)^2-3(k-1)+1)\\
        &\mathrel{\phantom{=}} {} -4\times 6(k-2)^2-5\times 36(k-2)-6\times 24\\
        &=12n-12(8k^2-11k+4)
    \end{align*}
which completes the proof of the lemma.
\end{proof}

To complete Theorem \ref{The: Lower Bound}, if $n=16k^3-33k^2+24k-6$ then by solving for $k$ and using Lemma \ref{Lem: LB} we obtain that the number of edges in the subgraph of $A_3^U$ induced on the $n$ lattice points contained in the truncated octahedron is 
\begin{align*}
&\mathrel{\phantom{=}}6n-\frac{3}{16}  \left((16 \sqrt{64 n^2 - 13 n + 2} - 128 n + 13)^{\frac{2}{3}}\right. \\
&\qquad\qquad\qquad \left. \mbox{}+ \frac{49}{(16 \sqrt{64 n^2 - 13 n + 2} - 128 n + 13)^{\frac{2}{3}}} - 7\right)\\
&> 6n-\frac{3}{16}   \frac{49}{(16 \sqrt{64 n^2 - 13 n + 2} - 128 n + 13)^{\frac{2}{3}}} \\
&= 6n-\frac{3}{16}(16 \sqrt{64 n^2 - 13 n + 2} + 128 n - 13)^{\frac{2}{3}}
\\ &> 6n-\frac{3}{16}(32 \sqrt{64 n^2 - 13 n + 2})^{\frac{2}{3}} > 6n-\frac{3}{16}(32 \sqrt{64 n^2 })^{\frac{2}{3}}=6n-6\sqrt[3]{2}n^\frac{2}{3}.
\end{align*}

\section*{Acknowledgments}
I would like to thank Konrad Swanepoel for our helpful discussions and his insightful suggestions throughout the development of the results and the creation of this paper. I would also like to thank Mihir Neve for noticing a simplification in the last part of the proof of Theorem \ref{The: Rigid}. Lastly, Claude (Anthropic) was used to help with the implementation of \texttt{Small\_sphere\_packings.py} and \texttt{graph\_searches.sage}.

\bibliographystyle{plain}
\bibliography{main}

@incollection {BezKhan2018,
    AUTHOR = {Bezdek, K\'aroly and Khan, Muhammad A.},
     TITLE = {Contact numbers for sphere packings},
 BOOKTITLE = {New trends in intuitive geometry},
    SERIES = {Bolyai Soc. Math. Stud.},
    VOLUME = {27},
     PAGES = {25--47},
 PUBLISHER = {J\'anos Bolyai Math. Soc., Budapest},
      YEAR = {2018},
      ISBN = {978-3-662-57412-6; 978-3-662-57413-3},
   MRCLASS = {52C17 (52C10)},
  MRNUMBER = {3889255},
MRREVIEWER = {Peter\ G.\ Boyvalenkov},
}

@article {BezReid2013,
    AUTHOR = {Bezdek, K\'aroly and Reid, Samuel},
     TITLE = {Contact graphs of unit sphere packings revisited},
   JOURNAL = {J. Geom.},
  FJOURNAL = {Journal of Geometry},
    VOLUME = {104},
      YEAR = {2013},
    NUMBER = {1},
     PAGES = {57--83},
      ISSN = {0047-2468,1420-8997},
   MRCLASS = {05B40 (11H31 52C17 52C45)},
  MRNUMBER = {3047448},
       DOI = {10.1007/s00022-013-0156-4},
       URL = {https://doi.org/10.1007/s00022-013-0156-4},
}

@article {Bez,
    AUTHOR = {Bezdek, K\'aroly },
     TITLE = {Contact numbers for congruent sphere packings in {E}uclidean
              3-space},
   JOURNAL = {Discrete Comput. Geom.},
  FJOURNAL = {Discrete \& Computational Geometry. An International Journal
              of Mathematics and Computer Science},
    VOLUME = {48},
      YEAR = {2012},
    NUMBER = {2},
     PAGES = {298--309},
      ISSN = {0179-5376,1432-0444},
   MRCLASS = {52C17 (05B40 11H31 52A40)},
  MRNUMBER = {2946449},
MRREVIEWER = {Zsolt\ L\'angi},
       DOI = {10.1007/s00454-012-9405-9},
       URL = {https://doi.org/10.1007/s00454-012-9405-9},
}

@article {EmpAMB2011,
    AUTHOR = {Arkus, Natalie and Manoharan, Vinothan N. and Brenner, Michael
              P.},
     TITLE = {Deriving finite sphere packings},
   JOURNAL = {SIAM J. Discrete Math.},
  FJOURNAL = {SIAM Journal on Discrete Mathematics},
    VOLUME = {25},
      YEAR = {2011},
    NUMBER = {4},
     PAGES = {1860--1901},
      ISSN = {0895-4801,1095-7146},
   MRCLASS = {52C17 (65H10)},
  MRNUMBER = {2873224},
MRREVIEWER = {Philippe\ Ryckelynck},
       DOI = {10.1137/100784424},
       URL = {https://doi.org/10.1137/100784424},
}

@article {EmpHolmes-Cerfon2016,
    AUTHOR = {Holmes-Cerfon, Miranda C.},
     TITLE = {Enumerating rigid sphere packings},
   JOURNAL = {SIAM Rev.},
  FJOURNAL = {SIAM Review},
    VOLUME = {58},
      YEAR = {2016},
    NUMBER = {2},
     PAGES = {229--244},
      ISSN = {1095-7200,0036-1445},
   MRCLASS = {82D80 (52C25)},
  MRNUMBER = {3493944},
       DOI = {10.1137/140982337},
       URL = {https://doi.org/10.1137/140982337},
}

@article {BarErd2018,
    AUTHOR = {Barber, Ben and Erde, Joshua},
     TITLE = {Isoperimetry in integer lattices},
   JOURNAL = {Discrete Anal.},
  FJOURNAL = {Discrete Analysis},
      YEAR = {2018},
     PAGES = {Paper No. 7, 16},
      ISSN = {2397-3129},
   MRCLASS = {05C25 (05C63)},
  MRNUMBER = {3819052},
       DOI = {10.19086/da.3555},
       URL = {https://doi.org/10.19086/da.3555},
}

@article {BarErdeKevRob2023,
    AUTHOR = {Barber, Ben and Erde, Joshua and Keevash, Peter and Roberts,
              Alexander},
     TITLE = {Isoperimetric stability in lattices},
   JOURNAL = {Proc. Amer. Math. Soc.},
  FJOURNAL = {Proceedings of the American Mathematical Society},
    VOLUME = {151},
      YEAR = {2023},
    NUMBER = {12},
     PAGES = {5021--5029},
      ISSN = {0002-9939,1088-6826},
   MRCLASS = {05D99 (11P70)},
  MRNUMBER = {4648905},
MRREVIEWER = {Yuval\ Filmus},
       DOI = {10.1090/proc/16439},
       URL = {https://doi.org/10.1090/proc/16439},
}

@article {Gardner,
    AUTHOR = {Gardner, R. J.},
     TITLE = {The {B}runn-{M}inkowski inequality},
   JOURNAL = {Bull. Amer. Math. Soc. (N.S.)},
  FJOURNAL = {American Mathematical Society. Bulletin. New Series},
    VOLUME = {39},
      YEAR = {2002},
    NUMBER = {3},
     PAGES = {355--405},
      ISSN = {0273-0979,1088-9485},
   MRCLASS = {26D15 (52A40)},
  MRNUMBER = {1898210},
MRREVIEWER = {Apostolos\ A.\ Giannopoulos},
       DOI = {10.1090/S0273-0979-02-00941-2},
       URL = {https://doi.org/10.1090/S0273-0979-02-00941-2},
}

@article{Harborth74,
  title={Solution to problem {664 A}},
  author={Harborth, Heiko},
  journal={Elem. Math},
  volume={29},
  pages={14--15},
  year={1974}
}

@article {Bez2002ConvUB,
    AUTHOR = {Bezdek, K\'aroly},
     TITLE = {On the maximum number of touching pairs in a finite packing of
              translates of a convex body},
   JOURNAL = {J. Combin. Theory Ser. A},
  FJOURNAL = {Journal of Combinatorial Theory. Series A},
    VOLUME = {98},
      YEAR = {2002},
    NUMBER = {1},
     PAGES = {192--200},
      ISSN = {0097-3165,1096-0899},
   MRCLASS = {52C17},
  MRNUMBER = {1897933},
MRREVIEWER = {Marek\ Lassak},
       DOI = {10.1006/jcta.2001.3204},
       URL = {https://doi.org/10.1006/jcta.2001.3204},
}

@article {SCDW1953Kissing,
    AUTHOR = {Sch\"utte, K. and van der Waerden, B. L.},
     TITLE = {Das {P}roblem der dreizehn {K}ugeln},
   JOURNAL = {Math. Ann.},
  FJOURNAL = {Mathematische Annalen},
    VOLUME = {125},
      YEAR = {1953},
     PAGES = {325--334},
      ISSN = {0025-5831,1432-1807},
   MRCLASS = {52.0X},
  MRNUMBER = {53537},
MRREVIEWER = {H.\ S. M. Coxeter},
       DOI = {10.1007/BF01343127},
       URL = {https://doi.org/10.1007/BF01343127},
}

@misc{Brascamp,
      title={The {B}rascamp-{L}ieb inequality in {C}onvex {G}eometry and in the {T}heory of {A}lgorithms}, 
      author={Károly J. Böröczky},
      year={2024},
      eprint={2412.11227},
      archivePrefix={arXiv},
      primaryClass={math.MG},
      url={https://arxiv.org/abs/2412.11227}, 
}

@article {Hales2005,
    AUTHOR = {Hales, Thomas C.},
     TITLE = {A proof of the {K}epler conjecture},
   JOURNAL = {Ann. of Math. (2)},
  FJOURNAL = {Annals of Mathematics. Second Series},
    VOLUME = {162},
      YEAR = {2005},
    NUMBER = {3},
     PAGES = {1065--1185},
      ISSN = {0003-486X,1939-8980},
   MRCLASS = {52C17},
  MRNUMBER = {2179728},
MRREVIEWER = {G\'eza\ T\'oth},
       DOI = {10.4007/annals.2005.162.1065},
       URL = {https://doi.org/10.4007/annals.2005.162.1065},
}

@article {Arb,
    AUTHOR = {Johansson, Fredrik},
     TITLE = {Arb: efficient arbitrary-precision midpoint-radius interval
              arithmetic},
   JOURNAL = {IEEE Trans. Comput.},
  FJOURNAL = {Institute of Electrical and Electronics Engineers.
              Transactions on Computers},
    VOLUME = {66},
      YEAR = {2017},
    NUMBER = {8},
     PAGES = {1281--1292},
      ISSN = {0018-9340,1557-9956},
   MRCLASS = {99-03},
  MRNUMBER = {3681746},
       DOI = {10.1109/TC.2017.2690633},
       URL = {https://doi.org/10.1109/TC.2017.2690633},
}

@manual{Flint,
  key    = {{FLINT}},
  author = {The {FLINT} team},
  title  = {{FLINT}: {F}ast {L}ibrary for {N}umber {T}heory},
  year   = {2026},
  note   = {Version 3.6.0, \texttt{arb} module, \url{https://flintlib.org/doc/arb.html}}
}

@article {Erdos46,
    AUTHOR = {Erd\H{o}s, P.},
     TITLE = {On sets of distances of {$n$} points},
   JOURNAL = {Amer. Math. Monthly},
  FJOURNAL = {American Mathematical Monthly},
    VOLUME = {53},
      YEAR = {1946},
     PAGES = {248--250},
      ISSN = {0002-9890,1930-0972},
   MRCLASS = {48.0X},
  MRNUMBER = {15796},
       DOI = {10.2307/2305092},
       URL = {https://doi.org/10.2307/2305092},
}

@article {Erdos75,
    AUTHOR = {Erd\H{o}s, Paul},
     TITLE = {On some problems of elementary and combinatorial geometry},
   JOURNAL = {Ann. Mat. Pura Appl. (4)},
  FJOURNAL = {Annali di Matematica Pura ed Applicata. Serie Quarta},
    VOLUME = {103},
      YEAR = {1975},
     PAGES = {99--108},
      ISSN = {0003-4622},
   MRCLASS = {05B25},
  MRNUMBER = {411984},
MRREVIEWER = {N.\ G.\ de Bruijn},
       DOI = {10.1007/BF02414146},
       URL = {https://doi.org/10.1007/BF02414146},
}

\end{document}